\documentclass[11pt]{article}
\usepackage[tbtags]{amsmath}
\usepackage{amssymb}
\usepackage{amsthm}
\usepackage[misc]{ifsym}
\usepackage{cases}
\usepackage{mathrsfs}
\usepackage{color}
\usepackage{xcolor}

\numberwithin{equation}{section}   

\normalsize

\title{\bf  Relationship between MP and DPP for Risk-Sensitive Stochastic Optimal Control Problems: Viscosity Solution Framework 
\thanks{This work is financially supported by National Natural Science Foundations of China (12471419, 12271304), and Shandong Provincial Natural Science Foundation (ZR2024ZD35).}}
\author{\normalsize Huanqing Dong\thanks{\it School of Mathematics, Shandong University, Jinan 250100, P.R. China, E-mail: donghuanqing@mail.sdu.edu.cn},
 \quad Jingtao Shi\thanks{\it Corresponding author, School of Mathematics, Shandong University, Jinan 250100, P.R. China, E-mail: shijingtao@sdu.edu.cn}}

\newtheorem{mythm}{Theorem}[section]
\newtheorem{mydef}{Definition}[section]
\newtheorem{mylem}{Lemma}[section]
\newtheorem{Remark}{Remark}[section]

\begin{document}
	
\maketitle

\noindent{\bf Abstract:}\quad 
In this paper, we study the relationship between general maximum principle and dynamic programming principle for risk-sensitive stochastic optimal control problems, where the control domain is not necessarily convex. 
The original problem is equivalent to a stochastic recursive optimal control problem of a forward-backward system with quadratic generators. 
Relations among the adjoint processes, the generalized Hamiltonian function and the value function are proved under the framework of viscosity solutions. 
Some examples are given to illustrate the theoretical results.

\vspace{2mm}
	
\noindent{\bf Keywords:}\quad Risk-sensitive stochastic optimal control, maximum principle, dynamic programming principle, quadratic BSDEs, sub/super-jets, viscosity solution 
	
\vspace{2mm}
	
\noindent{\bf Mathematics Subject Classification:}\quad 93E20, 60H10, 35K15
	
\section{Introduction}

It is well-known that Pontryagin's \emph{maximum principle} (MP) and Bellman's \emph{dynamic programming principle} (DPP) are two of the most important tools in solving optimal control problems. 
So a natural question is: Are there any connections between these two methods? The relationship between MP and DPP is studied in many literatures. 
For the deterministic optimal control problems, the results were discussed by Fleming and Rishel \cite{FR75}, Barron and Jensen \cite{BJ86} and Zhou \cite{Z90-1}. 
Bensoussan \cite{B82} (see also \cite{YZ99}) obtained the relationship between MP and DPP for stochastic optimal control problems.  Zhou \cite{Z90-2,Z91} (see also \cite{YZ99}) generalized their relationship using the viscosity solution theory, without the assumption that the value function is smooth.

In this paper, we consider the relationship between MP and DPP for a risk-sensitive stochastic optimal control problem, where the system is given by the following controlled \emph{stochastic differential equation} (SDE):
\begin{equation}\label{state SDE}
\left\{
\begin{aligned}
	dX(t) &= b(t, X(t), u(t)) dt + \sigma(t, X(t), u(t)) dW(t),\\
	X(0) &= x,
\end{aligned}
\right.
\end{equation}
and the cost/objective functional is defined by
\begin{equation}\label{risk-sensitive cost functional}
	J(u(\cdot)) = \mu^{-1}\log \mathbb{E} \left[\exp\left\{\mu\int_0^T f(t, X(t), u(t))  dt+ \mu h(X(T))\right\}\right].
\end{equation}
Risk-sensitive optimal control problems have attracted many researchers since the early work of Jacobson \cite{J73}. 
On the one hand, this problem can be reduced to the risk-neutral case under the limit of the risk-sensitive parameter. 
On the other hand, it has close connection with differential games \cite{J92,LZ01}, robust control theory \cite{LZ01,MB17}, and can be applied to describe the risk attitude of an investor by the risk-sensitive parameter in mathematical finance \cite{BP99,FS00}.

Whittle \cite{W90,W91} first derived Pontryagin's MP based on the theory of large deviation for a risk-sensitive stochastic optimal control problem. 
A new risk-sensitive maximum principle was established by Lim and Zhou \cite{LZ05} under the condition that the value function is smooth, based on the general stochastic MP of Peng \cite{P90} and the relationship between MP and DPP of Yong and Zhou \cite{YZ99}. 
This idea is used in Shi and Wu \cite{SW11,SW12} for studying jump processes case and mathematical finance. 
Djehiche et al. \cite{DTT15}  extend the results of \cite{LZ05} to risk-sensitive control problems for dynamics that are non-Markovian and mean-field type. 
Further developments on risk-sensitive stochastic control problems in a diverse set of settings, see  Moon et al. \cite{MDB19}, Chala \cite{C17-1,C17-2}, Lin and Shi \cite{LS25-1,LS25-2} and the references therein.

In El Karoui and Hamad\`{e}ne \cite{EH03}, the risk-sensitive objective functional is characterized by a {\it backward SDE} (BSDE) with quadratic generators that does not necessarily satisfy the usual Lipschitz condition. Consider the following BSDE with quadratic generators:
\begin{equation}\label{BSDE quadratic growth}
\left\{
\begin{aligned}
	dY(t) &= -\left[f(t, X(t), Y(t), Z(t), u(t)) + \frac{\mu}{2}|Z(t)|^2\right]dt +  Z(t)dW(t),\\
	Y(T) &= h(X(T)).
\end{aligned}
\right.
\end{equation}
By the DPP approach, Moon \cite{M21} extended the results of classical risk-sensitive optimal control problem studied in \cite{FS06} of the objective functional given by the BSDE like \eqref{BSDE quadratic growth}. 
They showed that the value function is the unique viscosity solution of the corresponding Hamilton-Jacobi-Bellman equation under an additional risk-sensitive condition. 
When $f$ is independent of $y$ and $z$, it is easy to check that $Y(0)=\mu^{-1}\log \mathbb{E} \left[\exp\left\{\mu\int_{0}^{T} f(t, X(t), u(t))  \, dt+ \mu h(X(T))\right\}\right]$. 
Thus, the risk-sensitive optimal control problem of minimizing $J(u(\cdot))$ in \eqref{risk-sensitive cost functional} subject to \eqref{state SDE} is equivalent to minimizing $Y(0)$ subject to \eqref{state SDE} and \eqref{BSDE quadratic growth}. 
In this case, the controlled system becomes a decoupled \emph{forward-backward SDE} (FBSDE), with the objective functional of a BSDE with quadratic generators. 
Kobylanski \cite{K00} first studied the solvability of quadratic BSDEs with the bounded terminal values in 2000. 
Briand and Hu \cite{BH06,BH08} studied the solvability of quadratic BSDEs with unbounded terminal and convex generator. 
Hu et al. \cite{HJX22} obtained a global maximum principle for decoupled forward-backward stochastic control problems with quadratic generators. 
For more research on  quadratic BSDEs, see \cite{Z17,FT18,HT16,LS25-2} and the references therein.

For the relationship between MP and DPP for optimal control problems of decoupled FBSDEs/recursive utilities, Shi and Yu \cite{S13} investigated the local case in which the control domain is convex and the value function is smooth. 
Nie et al. \cite{NSW17} studied the general case by the second-order adjoint equation of Hu \cite{Hu17}, within the framework of viscosity solutions. 
Hu et al. \cite{HJX20} studied the relationship between general MP and DPP for the fully coupled forward-backward stochastic controlled system within the framework of viscosity solutions. 
Note that all the aforementioned references considered stochastic optimal control problems for BSDE objective functionals with linear growth coefficients satisfying Lipschitz conditions. 
For the relationship between MP and DPP for optimal control problems of decoupled FBSDEs and BSDE objective functionals with quadratic generators, very recently, Wu et al. \cite{W25} established the connection between MP and DPP for the risk-sensitive recursive utility singular control problem under the assumption of smooth value function, where the cost functional is given by a BSDE with quadratic  generators, driven by a discontinuous semimartingale. 
Dong and Shi \cite{DS26} researched the relationship between MP and DPP for risk-sensitive stochastic optimal control problems under the smooth assumption of the value functions. 
However, to the best of our knowledge, there are no results about the relationship between MP and DPP for risk-sensitive stochastic optimal control problems under the framework of viscosity solutions.

Inspired by the above works, in this paper, we will derive the relationship between MP and DPP for risk-sensitive stochastic optimal control problems. 
As mentioned above, the risk-sensitive criteria are connected to quadratic BSDEs. 
Hence, the original problem can be transformed to a stochastic recursive optimal control problem of a forward-backward system with quadratic generators. 
For this problem, because the cost functional \eqref{risk-sensitive cost functional} can be defined by the controlled BSDE with quadratic generators, we wish to connect the MP of Hu et al. \cite{HJX22} under certain conditions. 
And we wish to apply the DPP of Moon \cite{M21} under certain conditions. 
We obtain that the connection between the adjoint process $(p,P)$ in the maximum principle and the first-order and second-order sub- (resp. super-) jets of the value function $V$ in the $x$-variable is
\begin{equation}\label{relationship 1}
\begin{aligned}
	\{p(s)\} \times [P(s),\infty) \subseteq D_x^{2,+} V(s,\bar{X}^{t,x;\bar{u}}(s)), && \forall s \in [t,T],\ \mathbf{P}\text{-a.s.}, \\
	D_x^{2,-} V(s,\bar{X}^{t,x;\bar{u}}(s)) \subseteq \{p(s)\} \times (-\infty,P(s)], && \forall s \in [t,T],\ \mathbf{P}\text{-a.s.},
\end{aligned}
\end{equation}
and the connection between the function $\mathcal{H}_{1}$ and the right sub- (resp. super-) jets of $V$ in the $t$-variable is
\begin{equation}\label{relationship 2}
\begin{aligned}
	[-\mathcal{H}_{1}(s,\bar{X}^{t,x;\bar{u}}(s),\bar{Z}^{t,x;\bar{u}}(s),\infty) \subseteq D_{t^+}^{1,+} V(s,\bar{X}^{t,x;\bar{u}}(s)),  &&\text{a.e.}\, s \in [t,T],\ \mathbf{P}\text{-a.s.}, \\
	D_{t^+}^{1,-} V(s,\bar{X}^{t,x;\bar{u}}(s)) \subseteq (-\infty,-\mathcal{H}_{1}(s,\bar{X}^{t,x;\bar{u}}(s),\bar{Z}^{t,x;\bar{u}}(s),\bar{u}(s))], && \text{a.e.}\, s \in [t,T],\ \mathbf{P}\text{-a.s.}.
\end{aligned}
\end{equation}

The main contributions of this paper can be summarized as follows.

(1) We study the relationship between MP and DPP for risk-sensitive stochastic optimal control problems within the framework of viscosity solutions, where the control domain is not necessarily convex. 
The original problem can be transformed to a stochastic recursive optimal control problem of a forward-backward system with quadratic generators. 
We extend the work of Zhou \cite{Z90-2,Z91} to the risk-sensitive optimal control problem and partially generalize to the work by Nie et al. \cite{NSW17} to a forward-backward system, where the backward equation has quadratic growth.

(2) Compared with \cite{NSW17} and \cite{HJX20}, the difficulties of proving the above relations come from the quadratic growth property of our controlled system. 
Due to the quadratic growth property, when we establish the relation \eqref{relationship 1}, we need to perturb the initial state $x$ which leads to the decoupled variational equation \eqref{recursive FBSDE1}, where the backward equations are linear BSDEs with unbounded stochastic Lipschitz coefficients involving BMO-martingales, which has been studied by Briand and Confortola \cite{BC08} (see \cite{AID07,HJX22,FT18} for some extensions). 

(3) Inspired by Hu et al. \cite{HJX20}, we utilize the relationship between $(\hat{Y}(\cdot),\hat{Z}(\cdot))$ and $\hat{X}(\cdot)$ (see equation \eqref{relationship between X,Y and Z}) and estimate the remainder terms of the forward and backward equations simultaneously. 
Then we obtain a class of linear BSDEs with unbounded random Lipschitz coefficients (see equation \eqref{Stochastic Lipschitz BSDE1} and \eqref{Stochastic Lipschitz BSDE2}), for which corresponding well-posedness results are available. 
Consequently, we obtain the required estimation results, thereby enabling the establishment of relationship \eqref{relationship 1}. The proof idea for relationship \eqref{relationship 2} is similar to this.

The rest of this paper is organized as follows. 
In Section 2, we state our risk-sensitive stochastic optimal control problem and give some preliminaries. 
In Section 3, we state MP and DPP for risk-sensitive stochastic optimal control. 
In Section 4, we give the connections between the value function and the adjoint processes within the framework of the viscosity solutions. 
We also give two interesting examples to explain the related results in Section 5. 
Finally, in Section 6, we give the concluding remarks.
Proofs of the main results are given in the Appendix.

$\mathit{Notations}$. In this paper, we denote by $\mathbb{R}^n$ the space of $n$-dimensional Euclidean space, by $\mathbb{R}^{n \times d}$ the space of $n \times d$ matrices, and by $\mathbb{S}^n $ the space of $n \times n$ symmetric matrices. 
For $x \in \mathbb{R}^n$, $x^\top$ denotes its transpose.  $\langle \cdot , \cdot \rangle$ and $| \cdot |$ denote the scalar product and norm in the Euclidean space, respectively. The trace of a matrix A is $\operatorname{tr}(A)$. 
Let $C([0,T] \times \mathbb{R}^n)$ be the set of real-valued continuous functions defined on $[0,T] \times \mathbb{R}^n$, $C^{1,k}([0,T] \times \mathbb{R}^n)$ ($k \geq 1$) be the set of real-valued functions such that $f \in C([0,T] \times \mathbb{R}^n)$, $f_t$ and the partial derivatives of $f$ with respect to $x$ up to the $k$th order are continuous and bounded. 

\section{Problem statement and preliminaries}

Let $T > 0$ be fixed, and $U \subset \mathbb{R}^m$ be nonempty and compact. Given $t \in [0, T )$, denote by $\mathcal{U}^{w}[t,T]$ the set of all 5-tuples $(\Omega, \mathcal{F}, \mathbf{P}, W(\cdot); u(\cdot))$
satisfying the following conditions:
(i) $(\Omega, \mathcal{F}, \mathbf{P})$ is a complete probability space; 
(ii) $\{W(s)\}_{s \geq t}$ is a one-dimensional standard Brownian motion defined on $(\Omega, \mathcal{F}, \mathbf{P})$ over $[t,T]$ (with $W(t)=0$, a.s.), and
$\mathcal{F}^t_s=\sigma\{W(r); t\leq r \leq s\}$ augmented by all the $\mathbf{P}$-null sets in $\mathcal{F}$;
and (iii) $u(\cdot): [t, T] \times \Omega \to U$ is an $\{\mathcal{F}^t_s\}_{s \geq t}$-adapted process on $(\Omega, \mathcal{F}, \mathbf{P})$ such that $\mathbb{E}\left[ \int_{t}^{T} |u(s)|^p \, \mathrm{d}s \right]$ for any $p>0$.
	
For any given $p,q \geq 1$, we introduce the following spaces.

$\mathcal{M}^{p,q}_{\mathcal{F}}([t,T];\mathbb{R}^n)$: the space of $\{\mathcal{F}^t_s\}_{s \geq t}$-adapted processes $\psi(\cdot)$ on $[t,T]$ satisfying 
\begin{equation*}
	\left\| \psi \right\|_{p,q} 
	:= \left( \mathbb{E}\left[ \left( \int_{t}^{T} |\psi(s)|^p \, ds
	\right)^{\frac{q}{p}} \right] \right)^{\frac{1}{q}} < \infty.
\end{equation*}
In particular, we denote by $M^{p}_{\mathcal{F}}([t,T];\mathbb{R}^n)$ the above space when $p=q$.
	
$L^\infty_{\mathcal{F}}([t,T];\mathbb{R}^n)$: the space of $\{\mathcal{F}^t_s\}_{s \geq t}$-adapted processes $\psi(\cdot)$ on $[t,T]$ satisfying 
\begin{equation*}
	\left\| \psi \right\|_\infty := \underset{(s,\omega) \in [t,T] \times \Omega}{\mathrm{ess\,sup}} |\psi(s,\omega)|< \infty.
\end{equation*}

$\mathcal{S}^{p}_{\mathcal{F}}([t,T];\mathbb{R}^n)$: the space of $\{\mathcal{F}^t_s\}_{s \geq t}$-adapted processes $\psi(\cdot)$ on $[t,T]$ satisfying 
\begin{equation*}
	\mathbb{E}\left[\sup\limits_{s \in [t,T]}|\psi(s)|^{p}\right] < \infty.
\end{equation*}

$BMO_p$:  the space of $\{\mathcal{F}^t_s\}_{s \geq t}$-adapted and real-valued martingale $M$ satisfying
\begin{equation*}
	\left\| M \right\|_{BMO_p} := \underset{\tau}{\mathrm{sup}} \left\| \left( \mathbb{E}\left[ |M_T - M_\tau|^p \mid \mathcal{F}_\tau \right] \right)^{\frac{1}{p}} \right\|_\infty < \infty, \quad p \in [1, +\infty),
\end{equation*}
where the supremum is taken over all stopping times $\tau \in [t,T]$.	

In the following lemma we state the properties and notations of BMO-martingales, we refer readers to \cite{K94} or \cite{HY92} and the references therein for more details.
\begin{mylem}\label{properties of BMO martingales}
(1) Let $ p \in (1, +\infty) $. Then, there is a positive constant $ C_p $ such that, for any $\{\mathcal{F}^t_s\}_{s \geq t}$-adapted and real-valued martingales $ M $, we have
\begin{equation*}
\|M\|_{BMO_1} \leq \|M\|_{BMO_p} \leq C_p \|M\|_{BMO_1}.
\end{equation*}
Thus, for any \( p \geq 1 \), we write simply $BMO$ for $BMO_p$.

(2) Denote by $\mathcal{E}(M)$ the Dol\'eans–Dade exponential of a continuous local martingale $M$, that is, $\mathcal{E}(M_s) = \exp\left\{ M_s - \frac{1}{2}\langle M \rangle_s \right\}$ for any $s \in [t,T]$. If $M \in BMO$, then $\mathcal{E}(M)$ is a uniformly integrable martingale.

(3) The reverse H\"older inequality: Let $M\in BMO$. Denote by $p_*$ the positive constant linked to a BMO-martingale $M$ such that $\Psi(p_*)=\|M\|_{BMO_{2}}$, where the monotonically decreasing function 
\begin{equation*}
\Psi(x)=\sqrt{1+\frac{1}{x^{2}}\ln\frac{2x-1}{2(x-1)}}-1,\quad x\in(1,+\infty).
\end{equation*}
We make the convention that $\Psi(+\infty):=\lim_{x\to+\infty}\Psi(x)=0$ and $p_{*}=+\infty$ when $\|M\|_{BMO_{2}}=0$. Thus $p_*$ is uniquely determined. If $p\in(1,p_*)$, then, for any stopping time $\tau\in[0,T]$,
\begin{equation*}
\mathbb{E}\left[(\mathcal{E}(M_{T}))^{p}\mid\mathcal{F}_{\tau}\right]\leq K\left(p,\|M\|_{BMO_{2}}\right)(\mathcal{E}(M_{\tau}))^{p},\quad \mathbf{P}\text{-a.s.},
\end{equation*}
where the constant $K\left(p,\|M\|_{BMO_{2}}\right)$ can be chosen depending only on $p$ and $\|M\|_{BMO_{2}}$, e.g.,
\begin{equation*}
K\left(p,\|M\|_{BMO_{2}}\right)=2\left(1-\frac{2p-2}{2p-1}\exp\left\{p^{2}\left[\|M\|_{BMO_{2}}^{2}+2\|M\|_{BMO_{2}}\right]\right\}\right)^{-1}.
\end{equation*}
By $q_{*}$ we denote the conjugate exponent of $p_{*}$, that is, $(p_{*})^{-1} + (q_{*})^{-1} = 1$.

(4) $H \cdot W$: $H$ is an $\{\mathcal{F}^t_s\}_{s \geq t}$-adapted process and $H \cdot W$ is the stochastic integral of $H$ with respect to $W$, that is, $H \cdot W = \int_t^{\cdot} H(s)dW(s)$.
\end{mylem}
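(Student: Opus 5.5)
This lemma gathers standard facts about BMO martingales, so the plan is to derive each item from classical martingale tools, following Kazamaki \cite{K94}, rather than build new machinery. The four parts are handled separately.

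For (1), the lower bound $\|M\|_{BMO_1}\le\|M\|_{BMO_p}$ follows from conditional Jensen (or Hölder), $\mathbb{E}[|M_T-M_\tau|\mid\mathcal{F}_\tau]\le(\mathbb{E}[|M_T-M_\tau|^p\mid\mathcal{F}_\tau])^{1/p}$, after taking the supremum over stopping times $\tau$. For the upper bound I will use the John--Nirenberg inequality for martingales. Suppose $\|M\|_{BMO_1}<1/(4e)$, say, in the continuous (Brownian) filtration. Then there is a constant $c$ such that $\mathbb{E}[\exp\{\lambda|M_T-M_\tau|\}\mid\mathcal{F}_\tau]\le c$. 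This is proved by the standard iterated stopping argument: one sets $\sigma_k=\inf\{s\ge\sigma_{k-1}:|M_s-M_{\sigma_{k-1}}|>a\}$ and uses Chebyshev to obtain $\mathbf{P}(\sigma_k<T\mid\mathcal{F}_\tau)\le 2^{-k}$, which gives geometric tail decay. Expanding the exponential then controls every $p$-th conditional moment. For general $M$ we apply this to $M/(8e\|M\|_{BMO_1})$ and rescale, which yields $\|M\|_{BMO_p}\le C_p\|M\|_{BMO_1}$.

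For (2), since all martingales here are continuous, $\|M\|_{BMO_2}^2=\sup_\tau\|\mathbb{E}[\langle M\rangle_T-\langle M\rangle_\tau\mid\mathcal{F}_\tau]\|_\infty$. By the energy (Garsia) inequality, $\mathbb{E}[\exp\{\tfrac12\langle M\rangle_T\}]<\infty$ whenever $\|M\|_{BMO_2}<1$. Hence Novikov's criterion applies to $\varepsilon M$ for small $\varepsilon$. To cover general $M$, I will use Kazamaki's criterion: one shows that $\exp\{\tfrac12 M\}$ is a uniformly integrable submartingale. This follows by a Hölder splitting of $\mathcal{E}(M)^{1/2}$ combined with the John--Nirenberg bound from (1). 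Kazamaki's criterion then gives that $\mathcal{E}(M)$ is a uniformly integrable martingale.

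For (3), the reverse Hölder inequality, I fix $p\in(1,p_*)$ and write
\[
\frac{\mathcal{E}(M)_T^p}{\mathcal{E}(M)_\tau^p}=\frac{\mathcal{E}(rpM)_{\tau,T}^{1/r}}{1}\exp\Big\{\tfrac{p(rp-1)}{2}(\langle M\rangle_T-\langle M\rangle_\tau)\Big\},
\]
with $r>1$ chosen optimally. Conditional Hölder with exponents $r$ and $r/(r-1)$ bounds the first factor by $1$, since $\mathcal{E}(rpM)$ is a supermartingale. The second factor is handled by the conditional energy inequality, which states that $\mathbb{E}[\exp\{\alpha(\langle M\rangle_T-\langle M\rangle_\tau)\}\mid\mathcal{F}_\tau]\le(1-\alpha\|M\|^2_{BMO_2})^{-1}$. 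Optimizing over $r$ gives exactly the condition $\Psi(p)>\|M\|_{BMO_2}$, that is, $p<p_*$, together with the explicit constant $K$.

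I expect the main obstacle to be the bookkeeping that matches the explicit function $\Psi$ and the constant $K$. I would simply cite Kazamaki's Theorem 3.1 for that algebra. Item (4) is only a notational definition and needs no proof.
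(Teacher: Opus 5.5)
Your item (1) is fine: the iterated-stopping proof of John--Nirenberg, plus homogeneity of the norm, gives $\|M\|_{BMO_p}\le C_p\|M\|_{BMO_1}$. The paper proves nothing here and simply refers to Kazamaki's monograph.

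The gap is in (2) and (3). Both of your arguments only work when $\|M\|_{BMO_2}$ is small, but the lemma is stated for every $M\in BMO$. The paper needs that generality, because the BMO norms entering Lemmas 2.3 and 2.4 are only bounded by constants depending on $(\mu,T,\|f\|_\infty,\|h\|_\infty)$, not small.

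In (3), after H\"older you need the energy inequality with exponent $\alpha(r)=\frac{r}{r-1}\cdot\frac{p(rp-1)}{2}$. Its minimum over $r>1$ is attained at $r=1+\sqrt{1-1/p}$ and equals $\frac12\bigl(p+\sqrt{p^2-p}\bigr)^2>\frac12$. So your route gives $R_p$ only if $\|M\|_{BMO_2}<\sqrt2/(p+\sqrt{p^2-p})<\sqrt2$, with constant $(1-\alpha\|M\|_{BMO_2}^2)^{-(r-1)/r}$. This is neither $\Psi$ nor $K$. Recall that $\Psi(p)\to\infty$ as $p\downarrow1$, which is exactly what makes $p_*>1$ for every BMO martingale. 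For $\|M\|_{BMO_2}\ge\sqrt2$ your route yields no exponent $p>1$ at all. Citing Kazamaki's Theorem 3.1 ``for the algebra'' does not close this, because that theorem is not the outcome of this optimization.

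Item (2) hits the same Novikov barrier. The split $e^{M/2}=\mathcal{E}(M)^{1/2}e^{\langle M\rangle/4}$ needs $\mathbb{E}[e^{\langle M\rangle_T/2}]<\infty$. John--Nirenberg controls $\mathbb{E}[e^{\lambda|M_T-M_\tau|}\mid\mathcal{F}_\tau]$ only for $\lambda$ of order $1/\|M\|_{BMO_1}$, and that is sharp. BMO contains martingales whose terminal value has exponential tails, e.g.\ Brownian motion run through a geometric number of exits from unit intervals. After multiplying such an $M$ by a large constant, $e^{M_T/2}$ is not even integrable. So your claim that $e^{M/2}$ is a uniformly integrable submartingale is false for large norms, not merely unproved.

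What is missing is a self-improvement step that never asks for exponential moments at level $\frac12$. Your H\"older--energy split does work for every norm if you apply it to a small \emph{negative} power. For stopping times $\tau\le\sigma\le T$ and small $q>0$ you get $\mathbb{E}[(\mathcal{E}(M)_\tau/\mathcal{E}(M)_\sigma)^q\mid\mathcal{F}_\tau]\le C$, because the required energy exponent $\frac{r}{2(r-1)}(q+rq^2)$ tends to $0$ with $q$.

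Feed this into the supermartingale inequality for $X=\mathcal{E}(M)/\mathcal{E}(M)_\tau$, stopped at the first hitting times of the levels $2^k$. This gives $\mathbf{P}(\sup_{[\tau,T]}X>2^k\mid\mathcal{F}_\tau)\le\theta^k$ for some $\theta<\frac12$, hence a conditional $L^p$ bound on $\sup X$ for some $p>1$. That yields $R_p$. Since $\mathcal{E}(M)$ is then dominated by an integrable maximal function, it also yields (2) for arbitrary norm.

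The explicit $\Psi$ and $K$ still have to be taken from Kazamaki's own proof of Theorem 3.1, which is all the paper does.
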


\subsection{Problem formulation}
	
For any initial time and state $(t,x) \in [0,T) \times \mathbb{R}^n$ and $u(\cdot) \in \mathcal{U}^{w}[t, T]$, consider the state $X^{t,x;u}(\cdot) \in \mathbb{R}^n$ given by the following controlled
$\mathrm{SDE}$:
\begin{equation}\label{controlled state SDE}
\left\{
\begin{aligned}
	dX^{t, x; u}(s) &= b(s, X^{t, x; u}(s), u(s)) ds + \sigma(s, X^{t, x; u}(s), u(s)) dW(s),\\
	X^{t, x; u}(t) &= x,\quad s \in [t,T]. 
\end{aligned}
\right.
\end{equation}
Here $b: [t,T] \times \mathbb{R}^n \times U \to \mathbb{R}^{n}$, $\sigma: [t,T] \times \mathbb{R}^n \times U \to \mathbb{R}^{n}$ are given functions.

We make the following assumption.\\
$\mathbf{Assumption\ 1}$ $b,\sigma$ are uniformly continuous in $(s,x,u)$, and there exists a constant $L_1>0$ such that for any $s \in [t,T]$, $x_1,x_2 \in \mathbb{R}^n, u \in U$,
\begin{equation}\label{assumption 1}
\begin{cases}
	|b(s,x_1,u)-b(s,x_2,u)| + |\sigma(s,x_1,u)-\sigma(s,x_2,u)| \leq L_1|x_1-x_2|,\\
	|b(s,0,u)|+ |\sigma(s,0,u)| \leq L_1(1+|u|).
\end{cases}
\end{equation}

For any $u(\cdot) \in \mathcal{U}^{w}[t, T]$, under Assumption 1, the equation \eqref{controlled state SDE} has a unique solution $X^{t, x; u}(s) \in \mathcal{S}^{p}_{\mathcal{F}}([t,T];\mathbb{R}^n)$
by the classical SDE theory. We consider the following cost functional with initial condition  $(t,x) \in [0,T) \times \mathbb{R}^n$ and $u(\cdot) \in \mathcal{U}^{w}[t,T]$:
\begin{equation}\label{risk-sensitive cost functional 2}
\begin{aligned}
	J(t,x;u(\cdot) )
	= \mu^{-1}\log \mathbb{E} \left[\exp\Big\{\mu\int_t^T f(s,X^{t, x; u}(s),u(s)) ds+ \mu h(X^{t, x; u}(T))\Big\}\right],
\end{aligned}
\end{equation}
where  $f: [t,T] \times \mathbb{R}^n \times U \to \mathbb{R}$, $h: \mathbb{R}^n \to \mathbb{R}$, and $\mu>0$ is the risk-sensitive parameter, which implies that the controller is risk-averse. 
In fact, let $J_1 \equiv \int_0^T f(s,X^{0, x; u}(s),u(s)) ds+ h(X^{0, x; u}(T))$. When the risk-sensitive parameter $\mu$ is small, the cost functional \eqref{risk-sensitive cost functional 2} can be expanded as
\begin{equation*}
	\mathbb{E}[J_1] + \frac{\mu}{2}\operatorname{Var}(J_1) + O(\mu^2),
\end{equation*}
where $\operatorname{Var}(J_1)$ denotes the variance of $J_1$. If $\mu<0$, which implies that the controller is risk-seeking from an economic point of view.  If $\mu \to 0$, it is reduced to risk-neutral case.

Our risk-sensitive stochastic optimal control problem is the following.

{\bf Problem (RS)}. For given $(t,x) \in [0,T) \times \mathbb{R}^n$, we aim to minimize \eqref{risk-sensitive cost functional 2} subject to \eqref{controlled state SDE} over $\mathcal{U}^{w}[t,T]$.

\subsection{Equivalent stochastic recursive optimal control problem}

From  \cite{EH03} (see also \cite{M21}), we note that minimizing \eqref{risk-sensitive cost functional 2} in Problem (RS) is equivalent to minimize the following cost functional
\begin{equation}\label{recursive utility}
	J(t,x;u(\cdot)) = Y^{t,x;u}(t),
\end{equation}
where $Y^{t,x;u}(\cdot)$ is the first component of the solution pair $(Y^{t,x;u}(\cdot),Z^{t,x;u}(\cdot))$ to the scalar-valued BSDE with quadratic generator:
\begin{equation}\label{scalar-valued BSDE}
\left\{
\begin{aligned}
	dY^{t, x; u}(s) &= -\left[f(s, X^{t, x; u}(s), u(s)) + \frac{\mu}{2}|Z^{t,x;u}(s)|^2\right]ds +  Z^{t,x;u}(s)dW(s),\\
	Y^{t, x; u}(T) &= h(X^{t, x; u}(T)).
\end{aligned}
\right.
\end{equation}
So our Problem (RS) is equivalent to a stochastic recursive optimal control problem, where the cost functional is \eqref{recursive utility} and the state equation is given by the following controlled FBSDE:
\begin{equation}\label{recursive FBSDE}
\begin{cases}
\begin{aligned}
	&dX^{t, x; u}(s)=b(s,X^{t, x; u}(s),u(s))ds +\sigma(s,X^{t, x; u}(s),u(s))dW(s),\\
	&dY^{t, x; u}(s) = -\left[f(s, X^{t, x; u}(s), u(s)) + \frac{\mu}{2}|Z^{t,x;u}(s)|^2\right]ds +  Z^{t,x;u}(s)dW(s),\\
	&X^{t, x; u}(t)=x,\ Y^{t, x; u}(T) = h(X^{t, x; u}(T)).
\end{aligned}
\end{cases}
\end{equation}

We make the following assumption.\\
$\mathbf{Assumption\ 2}$  $f$ is uniformly continuous in $(s,x,u)$ and bounded, $h$ is uniformly continuous in $x$ and bounded. For $\phi=f,h$, there exists a constant $L_2>0$, such that for any $s \in [t,T], x_1, x_2 \in \mathbb{R}^n$, and $u \in U$,
\begin{equation}\label{assumption H2}
		|\phi(s,x_1,u)-\phi(s,x_2,u)| \leq L_2|x_1-x_2|.
\end{equation}
Note that Assumption 2 implies $|f(s, x, u)+\frac{\mu}{2}|z|^2| \leq L(1 + |x| + |z|^2)$. 
Thus \eqref{scalar-valued BSDE} can be viewed as a controlled quadratic BSDE. 
From the existence result (Proposition 3) in \cite{BH08} and the uniqueness result (Lemma 2.1) in \cite{HT16} ( or see Theorem 2.3 in \cite{HJX22} ), we have the following result.

\begin{mylem}\label{solvability of recursive FBSDE}
Let Assumption 1 and 2 hold. Then, for any $u(\cdot) \in \mathcal{U}^{w}[t,T]$ and $p > 1$, 
the state equation \eqref{recursive FBSDE} admits a unique solution $(X^{t, x; u}(\cdot), Y^{t, x; u}(\cdot), Z^{t, x; u}(\cdot)) \in \mathcal{S}_{\mathcal{F}}^p([t,T];
\mathbb{R}^n) \times L_{\mathcal{F}}^{\infty}([t,T];\mathbb{R}) \times \mathcal{M}_{\mathcal{F}}^{2,p}([t,T];\mathbb{R})$ such that $Z^{t, x; u} \cdot W \in BMO$.  Furthermore, for any fixed
$s\in[t,T]$, we have the following estimates:
\begin{equation}
\begin{aligned}
	&\mathbb{E}\left[\sup\limits_{r \in [s,T]}|X^{t, x; u}(r)|^{p}\big|\mathcal{F}_s^t\right]\\
    &\quad \leq C_1 \left\{ |X^{t, x; u}(s)|^p + \mathbb{E}\left[ \left( \int_{s}^T |b(r,0,u(r))| dr \right)^p + \left( \int_{s}^T |\sigma(r,0,u(r))|^2 dr \right)^{\frac{p}{2}}\Big|\mathcal{F}_s^t\right] \right\}, \\
	&\|Y^{t, x; u}\|_{\infty} + \|Z^{t, x; u} \cdot W\|_{BMO_2} \leq C_2,
\end{aligned}
\end{equation}
where $C_1$ depends on $(p, T, L_1)$, and $C_2$ depends on  $(\mu, T, \|f\|_{\infty},\|h\|_{\infty})$.
\end{mylem}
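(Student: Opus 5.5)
The forward and backward parts decouple, so I will treat them in turn: first the SDE, then the quadratic BSDE with the forward solution frozen.

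\emph{Forward equation.} Under Assumption 1, $b$ and $\sigma$ are Lipschitz in $x$ uniformly in $u$ and have linear growth $L_1(1+|u|)$. Since $U$ is compact, $u$ is bounded, so standard SDE theory gives a unique strong solution $X^{t,x;u}\in\mathcal{S}^p_{\mathcal{F}}$ for every $p$. For the conditional estimate, fix $s$ and work on $[s,T]$. Write
\begin{equation*}
X(r)=X(s)+\int_s^r b\,dq+\int_s^r\sigma\,dW,\qquad |b(q,X,u)|\le L_1|X|+|b(q,0,u)|,
\end{equation*}
and similarly for $\sigma$. Apply the conditional Burkholder--Davis--Gundy inequality and H\"older's inequality to get
\begin{equation*}
\mathbb{E}\Big[\sup_{[s,r]}|X|^p\,\Big|\,\mathcal{F}^t_s\Big]\le C\Big\{|X(s)|^p+\int_s^r\mathbb{E}\Big[\sup_{[s,q]}|X|^p\,\Big|\,\mathcal{F}^t_s\Big]dq+\text{(inhomogeneous terms)}\Big\}.
\end{equation*}
Gronwall's lemma then gives the first estimate with $C_1=C_1(p,T,L_1)$. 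To make Gronwall legitimate, I first localize with stopping times so that the conditional expectations are finite a.s.

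\emph{Backward equation.} With $X$ fixed, the generator $g(s,z)=f(s,X(s),u(s))+\frac{\mu}{2}|z|^2$ does not depend on $y$. It is convex in $z$ and satisfies $|g|\le\|f\|_\infty+\frac{\mu}{2}|z|^2$. The terminal value $h(X(T))$ is bounded by $\|h\|_\infty$.

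Existence follows from Proposition 3 of \cite{BH08}, or from Kobylanski's bounded-terminal result. Here I will use the exponential transform directly. Set $\Gamma(s)=\mathbb{E}\big[\exp\{\mu h(X(T))+\mu\int_s^T f\,dr\}\,\big|\,\mathcal{F}^t_s\big]$, which takes values in $[e^{-\mu(\|h\|_\infty+T\|f\|_\infty)},\,e^{\mu(\|h\|_\infty+T\|f\|_\infty)}]$. Martingale representation applied to $\Gamma(s)\exp\{\mu\int_t^s f\,dr\}$ gives a process $\tilde Z$. Define
\begin{equation*}
Y=\mu^{-1}\log\Gamma,\qquad Z=\tilde Z/(\mu\,\Gamma\, e^{\mu\int_t^s f\,dr}).
\end{equation*}
It\^o's formula shows that $(Y,Z)$ solves \eqref{scalar-valued BSDE}, and $\|Y\|_\infty\le\|h\|_\infty+T\|f\|_\infty$. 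This also confirms the equivalence with \eqref{risk-sensitive cost functional 2}.

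\emph{BMO bound.} Apply It\^o's formula to $\phi(Y)=\mu^{-2}e^{2\mu Y}$, or simply to $e^{\beta Y}$ with $\beta=2\mu$. For any stopping time $\tau$ and a suitable choice of exponent, the quadratic terms satisfy $\frac{\beta^2}{2}-\frac{\beta\mu}{2}\ge\frac{\beta^2}{4}>0$ up to a constant. This yields
\begin{equation*}
\mathbb{E}\Big[\int_\tau^T|Z|^2dr\,\Big|\,\mathcal{F}_\tau\Big]\le C(\mu,T,\|f\|_\infty,\|h\|_\infty).
\end{equation*}
That is exactly $\|Z\cdot W\|_{BMO_2}\le C_2$. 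Since BMO processes have moments of all orders by the energy inequalities, we get $Z\in\mathcal{M}^{2,p}$ for every $p$.

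\emph{Uniqueness.} I expect this to be the main point needing care, because the generator is not Lipschitz. I will invoke Lemma 2.1 of \cite{HT16}. A direct argument also works. For two bounded solutions $(Y^i,Z^i)$ with $Z^i\cdot W\in BMO$, write $\frac{\mu}{2}(|Z^1|^2-|Z^2|^2)=\beta\,(Z^1-Z^2)$ with $\beta=\frac{\mu}{2}(Z^1+Z^2)$. Then $\beta\cdot W\in BMO$, so by Lemma \ref{properties of BMO martingales}(2) the exponential $\mathcal{E}(\beta\cdot W)$ is a true martingale. Girsanov's theorem makes $Y^1-Y^2$ a bounded martingale with zero terminal value under the new measure, hence $Y^1=Y^2$, and then $Z^1=Z^2$. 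Finally, the bounded-solution class is exactly where the BMO property holds, so uniqueness holds in the stated solution space.
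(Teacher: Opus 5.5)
Your proposal is correct, but it argues differently from the paper. The paper gives no argument of its own: it takes the forward part from classical SDE theory, existence for the quadratic BSDE from Proposition 3 of \cite{BH08}, and uniqueness from Lemma 2.1 of \cite{HT16} (or Theorem 2.3 of \cite{HJX22}).

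You instead use the special structure here. The generator $f(s,X(s),u(s))+\frac{\mu}{2}|z|^2$ is free of $y$, its $z$-part is exactly $\frac{\mu}{2}|z|^2$, and $f,h$ are bounded. This lets you solve the BSDE explicitly by the exponential (Cole--Hopf) transform and martingale representation. You then prove uniqueness by linearizing the quadratic term and applying Girsanov, using that $\frac{\mu}{2}(Z^1+Z^2)\cdot W\in BMO$. Your computations check out: with $Z=\tilde Z/(\mu\Gamma e^{\mu\int_t^s f\,dr})$, It\^o's formula gives exactly $dY=-[f+\frac{\mu}{2}|Z|^2]ds+Z\,dW$. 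With $\beta=2\mu$, the coefficient of $e^{\beta Y}|Z|^2$ is $\mu^2=\beta^2/4$.

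What your route buys is a self-contained proof with the explicit bound $\|Y\|_\infty\le\|h\|_\infty+T\|f\|_\infty$. It also directly verifies the identity between $Y(t)$ and the risk-sensitive cost \eqref{risk-sensitive cost functional 2}, which the paper imports from \cite{EH03}. The unbounded-terminal machinery of \cite{BH08} is not needed under Assumption 2. The cited results buy generality (dependence on $(y,z)$, unbounded data) that this lemma never uses.

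Two minor remarks. First, since $\mu>0$, the BMO bound follows without any exponential change: take $\mathbb{E}[\,\cdot\,|\mathcal{F}_\tau]$ in the BSDE to get $\frac{\mu}{2}\mathbb{E}[\int_\tau^T|Z|^2dr\,|\,\mathcal{F}_\tau]\le\|Y\|_\infty+\|h\|_\infty+T\|f\|_\infty$. Second, for $p\in(1,2)$ your BDG--H\"older--Gronwall sketch for the forward estimate needs the usual absorption step, because H\"older goes the wrong way for the exponent $p/2<1$. For example, use $(\int_s^r|X|^2dq)^{p/2}\le\varepsilon\sup_{[s,r]}|X|^p+C_\varepsilon(\int_s^r|X|dq)^p$. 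The $\varepsilon$-term is absorbed into the left side, which is finite since $X\in\mathcal{S}^p$.
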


In the spirit of Lemma 3.3 in \cite{FT18}, we give a priori estimate of BSDE \eqref{scalar-valued BSDE}. 

\begin{mylem}\label{priori estimate of BSDE}
For any $u(\cdot) \in \mathcal{U}^{w}[t, T]$, and given unique solution $X^{t, x; u}(\cdot) \in \mathcal{S}_{\mathcal{F}}^p([t,T];
\mathbb{R}^n)$, let $( Y^{t, x; u}_{i}(\cdot), Z^{t, x; u}_{i}(\cdot)), i=1,2$, denote the solution to the following quadratic BSDE: 
\begin{equation}
	Y^{t, x; u}_{i}(s) = h_{i}(X^{t, x; u}(T))+ \int_s^T\left[f_{i}(r, X^{t, x; u}(r), u(r)) + \frac{\mu}{2}|Z^{t,x;u}_{i}(r)|^2\right]dr - \int_s^T Z^{t,x;u}_{i}(r)dW(r),
\end{equation}
where  $f_i: [t,T] \times \mathbb{R}^n \times U \to \mathbb{R}$ and $h_i: \mathbb{R}^n \to \mathbb{R}$ are bounded and measurable.
Then for $p \geq 2$ and any fixed $s\in[t,T]$, we have the following estimate:
\begin{equation}\label{priori estimate of quadratic BSDE}
\begin{aligned}
	\mathbb{E}\bigg[\,& \sup_{r \in [s,T]} \bigl| Y^{t,x;u}_{1}(r) - Y^{t,x;u}_{2}(r) \bigr|^{p} 
	+ \Bigl( \int_{s}^T \bigl| Z^{t,x;u}_{1}(r) - Z^{t,x;u}_{2}(r) \bigr|^2 dr \Bigr)^{\frac{p}{2}} \Big| \mathcal{F}_s^t \bigg] \\
	&\leq C_3 \Bigg\{ \mathbb{E}\Bigg[ |h_{1}(X^{t,x;u}(T)) - h_{2}(X^{t,x;u}(T))|^{p\bar{q}^2} \\
	&\qquad\quad + \Bigl( \int_{s}^T \bigl| f_{1}(r, X^{t,x;u}(r), u(r)) - f_{2}(r, X^{t,x;u}(r), u(r)) \bigr| dr \Bigr)^{p\bar{q}^2} \Big| \mathcal{F}_s^t \Bigg] \Bigg\}^{\frac{1}{\bar{q}^2}},
\end{aligned}
\end{equation}
where a positive constant $\bar{q}$ satisfying  $q_*\leq\bar{q}<\infty$, and the lower bound $q_*>1$ controlled only by $(\mu, T, \|f_{1}\|_{\infty}, \|f_{2}\|_{\infty},
\|h_{1}\|_{\infty}, \|h_{2}\|_{\infty} )$, and some positive constant $C_3$ depending only on $(p, \bar{q}, \mu, T, \|f_{1}\|_{\infty}, \|f_{2}\|_{\infty}, \|h_{1}\|_{\infty}, \|h_{2}\|_{\infty} )$.

In particular, taking $h_1=0$ and $f_1=0$, we have
\begin{equation*}
\begin{aligned}
	\mathbb{E}\bigg[\,& \sup_{r \in [s,T]} \bigl| Y^{t,x;u}_{2}(r) \bigr|^{p} 
	+ \Bigl( \int_{s}^T \bigl| Z^{t,x;u}_{2}(r) \bigr|^2 dr \Bigr)^{\frac{p}{2}} \Big| \mathcal{F}_s^t \bigg] \\
	&\leq C_3 \Bigg\{ \mathbb{E}\Bigg[ | h_{2}(X^{t,x;u}(T))|^{p\bar{q}^2} 
	+ \Bigl( \int_{s}^T \bigl| f_{2}(r, X^{t,x;u}(r), u(r)) \bigr| dr \Bigr)^{p\bar{q}^2} \Big| \mathcal{F}_s^t \Bigg] \Bigg\}^{\frac{1}{\bar{q}^2}}.
\end{aligned}
\end{equation*}
\end{mylem}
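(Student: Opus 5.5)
The approach is the standard linearization of the quadratic term, followed by a Girsanov change of measure and the reverse H\"older inequality of Lemma \ref{properties of BMO martingales}.

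\textbf{Linearization.} Set $\Delta Y=Y_1-Y_2$, $\Delta Z=Z_1-Z_2$, $\Delta h=h_1(X(T))-h_2(X(T))$ and $\Delta f(r)=f_1(r,X(r),u(r))-f_2(r,X(r),u(r))$, dropping the superscripts $t,x;u$. Since $\frac{\mu}{2}(|Z_1|^2-|Z_2|^2)=\beta(r)\Delta Z(r)$ with $\beta=\frac{\mu}{2}(Z_1+Z_2)$, the pair $(\Delta Y,\Delta Z)$ solves the linear BSDE
\begin{equation*}
\Delta Y(r)=\Delta h+\int_r^T\big[\Delta f(\rho)+\beta(\rho)\Delta Z(\rho)\big]d\rho-\int_r^T\Delta Z(\rho)dW(\rho).
\end{equation*}
By Lemma \ref{solvability of recursive FBSDE}, applied with $f_i$ in place of $f$ (only boundedness of $f_i,h_i$ is used there), each $Z_i\cdot W$ is in $BMO$ with norm bounded by a constant depending only on $(\mu,T,\|f_i\|_\infty,\|h_i\|_\infty)$. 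Hence $\beta\cdot W\in BMO$, and the bound on $\|\beta\cdot W\|_{BMO_2}$ depends only on the listed data.

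\textbf{Girsanov and the estimate for $\Delta Y$.} Let $\Gamma=\mathcal{E}(\beta\cdot W)$. By Lemma \ref{properties of BMO martingales}(2) it is a uniformly integrable martingale, so $d\mathbf{Q}=\Gamma_T\,d\mathbf{P}$ defines an equivalent probability and $\tilde W=W-\int\beta\,d\rho$ is a $\mathbf{Q}$-Brownian motion. For $r\ge s$,
\begin{equation*}
\Delta Y(r)=\mathbb{E}^{\mathbf{Q}}\Big[\Delta h+\int_r^T\Delta f\,d\rho\,\Big|\,\mathcal{F}_r\Big],
\end{equation*}
where the stochastic integral is a true $\mathbf{Q}$-martingale because $\Delta Y$ is bounded. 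Take $p_*$ from Lemma \ref{properties of BMO martingales}(3) for $\beta\cdot W$, and any $\bar q\ge q_*$, so that $\bar p=\bar q/(\bar q-1)<p_*$. Set $\xi=|\Delta h|+\int_s^T|\Delta f|d\rho$. Bayes' formula gives $|\Delta Y(r)|\le \mathbb{E}[(\Gamma_T/\Gamma_r)\xi\,|\,\mathcal{F}_r]$. By conditional H\"older and the reverse H\"older inequality,
\begin{equation*}
|\Delta Y(r)|\le K^{1/\bar p}\,\big(\mathbb{E}[\xi^{\bar q}\,|\,\mathcal{F}_r]\big)^{1/\bar q}.
\end{equation*}
Taking $\sup_{r\in[s,T]}$ and using Doob's maximal inequality for the martingale $\mathbb{E}[\xi^{\bar q}|\mathcal{F}_r]$ with exponent $p/\bar q$ yields $\mathbb{E}[\sup_r|\Delta Y|^p|\mathcal{F}_s]\le C(\mathbb{E}[\xi^{p}|\mathcal{F}_s])$. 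If $p/\bar q\le 1$ one first enlarges the exponent; this is one reason the final bound uses the power $p\bar q^2$, which is always large enough.

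\textbf{Estimate for $\Delta Z$.} Apply It\^o's formula to $|\Delta Y|^2$ on $[s,T]$ and use $2|\Delta Y\beta\Delta Z|\le \frac12|\Delta Z|^2+2|\beta|^2|\Delta Y|^2$. This gives
\begin{equation*}
\int_s^T|\Delta Z|^2\lesssim \sup|\Delta Y|^2\Big(1+\int_s^T|\beta|^2\Big)+\sup|\Delta Y|\int_s^T|\Delta f|+\Big|\int_s^T\Delta Y\Delta Z\,dW\Big|.
\end{equation*}
Raise this to the power $p/2$ and take conditional expectations. The stochastic integral term is handled by BDG and absorbed into the left-hand side. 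For the term with $\beta$, apply Cauchy--Schwarz together with the energy inequality for BMO martingales, which gives $\mathbb{E}[(\int_s^T|\beta|^2)^k|\mathcal{F}_s]\le C_k$. This is where the second power of $\bar q$ comes in: H\"older costs one more exponent, leading to $\{\mathbb{E}[\xi^{p\bar q^2}|\mathcal{F}_s]\}^{1/\bar q^2}$. Lower-order powers are bounded by this quantity via Jensen's inequality, which is why the estimate is stated with $p\bar q^2$ throughout.

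\textbf{Main obstacle.} The main difficulty is the bookkeeping of conditional exponents. One must check that $p_*$, and hence $q_*$, depends only on the BMO bound, so only on $(\mu,T,\|f_i\|_\infty,\|h_i\|_\infty)$, and that the constants from reverse H\"older, Doob, BDG and the energy inequalities stay uniform in $x$, $u$ and $s$.

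The special case follows immediately: with $h_1=f_1=0$ the unique solution is $(Y_1,Z_1)=(0,0)$, and substituting this into the main estimate gives the stated bound.
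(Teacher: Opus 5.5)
Your proof is correct. Its first half matches the paper: the paper's linearizing coefficient $a(r)$ plays exactly the role of your $\beta=\frac{\mu}{2}(Z_1+Z_2)$, and the paper also bounds $\|a\cdot W\|_{BMO_2}$ in terms of the data via Lemma 2.2.

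The routes separate at the last step. The paper notes that the linear BSDE satisfies Assumption 3 and quotes Lemma 2.4, the Fujii--Takahashi estimate for stochastic-Lipschitz BSDEs with BMO coefficients. That lemma is the source of the exponent $p\bar q^2$ and of the dependence of $q_*$ and $C_3$ on the data. You instead prove the estimate directly, using that the linearized generator has no $y$-term:
Girsanov gives an explicit representation of $\Delta Y$; reverse H\"older plus Doob control $\sup|\Delta Y|$; It\^o/BDG plus the energy inequality control $\Delta Z$.

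Your route is self-contained, makes the constants visible, and is sharper. For $p>\bar q$ you get $\mathbb{E}[\sup_r|\Delta Y(r)|^p\,|\,\mathcal{F}^t_s]\le C\,\mathbb{E}[\xi^p\,|\,\mathcal{F}^t_s]$, and the exponent $p\bar q$ suffices throughout. The paper's citation is shorter and also covers generators with a $K|y-y'|$ term. It is the same tool the paper reuses for the linear BSDEs satisfied by $\gamma$ and $\hat\gamma$ in the Appendix.

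Three bookkeeping points need tightening.

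\emph{The endpoint $\bar q=q_*$.} Lemma 2.1(3) requires $\bar p<p_*$ strictly, and at $\bar p=p_*$ the constant $K$ blows up. So $\bar q=q_*$ is not covered as you wrote it. Define $p_*$ by $\Psi(p_*)=B+1$, where $B$ is the data-dependent bound on $\|\beta\cdot W\|_{BMO_2}$. Then every $\bar p\le p_*$ satisfies $\Psi(\bar p)>B$, and the reverse H\"older constant is at most $K(\bar p,B)<\infty$. This also settles the uniformity issue you flagged.

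\emph{The case $p\le\bar q$.} Here the exponent must be enlarged outside the supremum: $\mathbb{E}[\sup_r|\Delta Y(r)|^p\,|\,\mathcal{F}^t_s]\le(\mathbb{E}[\sup_r|\Delta Y(r)|^{p\bar q}\,|\,\mathcal{F}^t_s])^{1/\bar q}$. Then apply Doob with exponent $p>1$ to the martingale $\mathbb{E}[\xi^{\bar q}\,|\,\mathcal{F}^t_r]$.

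\emph{The $\beta$-term in the $\Delta Z$ estimate.} Cauchy--Schwarz yields $(\mathbb{E}[\xi^{2p}\,|\,\mathcal{F}^t_s])^{1/2}$. This is dominated by $(\mathbb{E}[\xi^{p\bar q^2}\,|\,\mathcal{F}^t_s])^{1/\bar q^2}$ only when $\bar q^2\ge 2$. Instead, apply H\"older with exponents $(\bar q,\bar p)$ to $\sup_r|\Delta Y(r)|^p\,(\int_s^T|\beta|^2dr)^{p/2}$. This gives $(\mathbb{E}[\xi^{p\bar q}\,|\,\mathcal{F}^t_s])^{1/\bar q}$, and Jensen then finishes.
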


\begin{proof}
Denote
\begin{equation*}
\begin{aligned}
	&\delta Y^{t,x;u}(s)=Y^{t,x;u}_{1}(s)-Y^{t,x;u}_{2}(s),\ \delta Z^{t,x;u}(s)=Z^{t,x;u}_{1}(s)-Z^{t,x;u}_{2}(s),\\
	&\delta f(s)=(f_{1}-f_{2})(s, X^{t,x;u}(s), u(s)),\ \delta h(T)=(h_{1}-h_{2})(X^{t,x;u}(T)).
\end{aligned}
\end{equation*}
Then we use a linearization method and rewrite
\begin{equation}\label{BSDE1}
	\delta Y^{t,x;u}(s)=\delta h(T) + \int_s^T \left(\delta f(r) +a(r)\delta Z^{t,x;u}(r)\right) dr -  \int_s^T \delta Z^{t,x;u}(r)dW(r),
\end{equation}
where the processes $a(r)$ are defined by 
\begin{equation*}
	a(r)=\frac{\frac{\mu}{2}\left(|Z^{t,x;u}_{1}(r)|^2-|Z^{t,x;u}_{2}(r)|^2\right)}{|\delta Z^{t,x;u}(r)|^2}\mathbf{1}_{\delta Z^{t,x;u}(r) \neq 0} \delta Z^{t,x;u}(r),
\end{equation*}
which implies that $|a(r)| \leq \mu\left(|Z^{t,x;u}_{1}(r)|+|Z^{t,x;u}_{2}(r)|\right)$. 
By Lemma \ref{solvability of recursive FBSDE}, $Z^{t,x;u}_{1} \cdot W$ and $Z^{t,x;u}_{2} \cdot W$ belong to $BMO$, so that $a \cdot W$ belongs to $BMO$. 
Since $h_i$ and $f_i$ are bounded, $\delta h(T)$ and $\delta f(r)$ are bounded. Thus, the BSDE \eqref{BSDE1} satisfies Assumption 3 of Section 2.3. 
By Lemma 2.4, for all $p \geq 2$ and any fixed $s\in[t,T]$, we have 
\begin{equation*}\begin{aligned}
	&\mathbb{E}\left[\,\sup_{s\leq r\leq T}|\delta Y^{t,x;u}(r)|^{p} 
	+ \Bigl(\int_{s}^T|\delta Z^{t,x;u}(r)|^2\,dr\Bigr)^{\frac{p}{2}}  \Big| \mathcal{F}_s^t \right]\\
	&\leq C_4\Bigg(\mathbb{E}\left[|\delta h(T)|^{p\bar{q}^2}
	+ \Bigl(\int_{s}^T|\delta f(r)|\,dr\Bigr)^{p\bar{q}^2} \Big| \mathcal{F}_s^t \right]\Bigg)^{\frac{1}{\bar{q}^2}},
\end{aligned}\end{equation*}
where $\bar{q}$ is a positive constant satisfying $q_*\leq\bar{q}<\infty$ with lower bound $q_*>1$ controlled only by $\|a \cdot W\|_{BMO_2}$ , and $C_4$ is a positive constant depending only on $(p,\bar{q},T,\|a \cdot W\|_{BMO_2})$. 
By Lemma 2.2, $\|a \cdot W\|_{BMO_2}$ is bounded by some constant depending only on $(\mu, T, \|f_{1}\|_{\infty}, \|f_{2}\|_{\infty}, \|h_{1}\|_{\infty},\\ \|h_{2}\|_{\infty})$.
Thus, \eqref{priori estimate of quadratic BSDE} holds. The proof is complete.
\end{proof}

We now define the value function as
\begin{equation}\label{value function}
	V(t,x):=\inf \limits_{u(\cdot) \in\, \mathcal{U}^{w}[t,T]}J(t,x;u(\cdot)),\quad(t,x) \in [0,T) \times \mathbb{R}^n.
\end{equation}
Any $\bar{u}(\cdot) \in \mathcal{U}^{w}[t,T]$ satisfying \eqref{value function} is called an optimal control, and the corresponding solution $(\bar{X}^{t,x;\bar{u}}(\cdot),\bar{Y}^{t,x;\bar{u}}(\cdot),\bar{Z}^{t,x;\bar{u}}(\cdot))$ to \eqref{recursive FBSDE} is called optimal state trajectory.

\begin{Remark}\label{V is meaningful}
From \cite{M21}, we know that under Assumption 1 and 2, the above value function is a deterministic function, so our definition \eqref{value function} is meaningful.
\end{Remark} 
 
\subsection{Stochastic Lipschitz BSDEs}

Due to the quadratic growth feature of the BSDE. Some BSDE with stochastic Lipschitz coefficients involving BMO-martingales appear in variational equations. 
We need the result of the existence and the uniqueness of solutions to this kind equations and their estimates. Consider the following BSDE:
\begin{equation}\label{Stochastic Lipschitz BSDE}
\left\{
\begin{aligned}
	-dY(s) &= g(s,Y(s),Z(s))ds - Z(s)dW(s),\\
	Y(T) &= \xi,
\end{aligned}
\right.
\end{equation}
where $g: \Omega \times [t,T] \times \mathbb{R} \times \mathbb{R} \to \mathbb{R}$, $\xi: \Omega \to \mathbb{R}$.

We make the following assumption.\\
$\mathbf{Assumption\ 3}$ The map $(\omega, s)\to g(\omega, s, \cdot, \cdot)$ is $\{\mathcal{F}_s^t\}_{s\ge t}$-progressively measurable.\\
(1) There exist a positive constant $K$ and a positive $\{\mathcal{F}_s^t\}_{s\ge t}$-progressively measurable process $H\cdot W\in BMO$ such that, for every $(y,z), (y',z') \in \mathbb{R} \times \mathbb{R}$,
\begin{equation*}
	|g(\omega,s,y,z) - g(\omega,s,y',z')| \leq K|y - y'| + H_s(\omega)|z - z'|, \quad \text{a.e. } s \in [t,T],\ \mathbf{P}\text{-a.s.}.
\end{equation*}
(2) $\xi$ is $\mathcal{F}_T$-measurable and for $\forall p\ge 2$, $\mathbb{E}\left[|\xi|^p + \left(\int_t^T |g(s,0,0)|ds\right)^p\right] < \infty.$

We note that Ankirchner et al. \cite{AID07} and Briand and Confortola \cite{BC08} studied a class of BSDEs with stochastic Lipschitz coefficients involving BMO-martingales. 
Fujii and Takahashi \cite{FT18} generalized the results of \cite{AID07} and \cite{BC08}, extending their results of locally Lipschitz BSDEs with BMO coefficients to the jump-diffusion setting, and derived a priori estimates for such BSDEs (see their Theorem A.1). 
If we consider the without jumps, the following results can be obtained.

\begin{mylem}\label{solvability of Stochastic Lipschitz BSDEs}
Let Assumption 3 hold. Then there exists a unique solution $(Y, Z)$ to the BSDE \eqref{Stochastic Lipschitz BSDE}. Moreover, for all $p \geq 2$  and any fixed $s\in[t,T]$, we have 
\begin{equation*}\label{estimate of stochastic Lipschitz BSDE}
  \mathbb{E}\left[\,\sup_{s\leq r\leq T}|Y(r)|^{p} + \Bigl(\int_s^T|Z(r)|^2\,dr\Bigr)^{\frac{p}{2}}  \Big| \mathcal{F}_s^t \right]
  \leq C_4\Bigg(\mathbb{E}\left[|\xi|^{p\bar{q}^2}+ \Bigl(\int_s^T|g(r,0,0)|\,dr\Bigr)^{p\bar{q}^2} \Big| \mathcal{F}_s^t \right]\Bigg)^{\frac{1}{\bar{q}^2}},
\end{equation*}
where $\bar{q}$ is a positive constant satisfying $q_*\leq\bar{q}<\infty$ with lower bound $q_*>1$ controlled only by $\|H \cdot W\|_{BMO_2}$, and $C_4$ is a positive constant depending only on $(p,\bar{q},T,K,\|H \cdot W\|_{BMO_2})$.
\end{mylem}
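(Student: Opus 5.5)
We would prove the statement via a Girsanov change of measure that removes the stochastic Lipschitz term in $z$, combined with the reverse H\"older inequality of Lemma \ref{properties of BMO martingales}(3) to pass back to the original measure.

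\textbf{Existence and uniqueness.} We take these from Briand--Confortola \cite{BC08}, or Theorem A.1 of \cite{FT18} specialized to the no-jump case; Assumption 3 is exactly their hypothesis. Our real task is the conditional estimate.

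\textbf{Linearization and change of measure.} Write $g(r,Y,Z)=g(r,0,0)+\alpha(r)Y(r)+\beta(r)Z(r)$, where
\begin{equation*}
\alpha(r)=\frac{g(r,Y,Z)-g(r,0,Z)}{Y}\mathbf{1}_{Y\neq 0},\qquad \beta(r)=\frac{g(r,0,Z)-g(r,0,0)}{Z}\mathbf{1}_{Z\neq0}.
\end{equation*}
By Assumption 3(1), $|\alpha|\le K$ and $|\beta|\le H$. Hence $\beta\cdot W\in BMO$ with $\|\beta\cdot W\|_{BMO_2}\le \|H\cdot W\|_{BMO_2}$, and $\mathcal{E}(\beta\cdot W)$ is a uniformly integrable martingale by Lemma \ref{properties of BMO martingales}(2). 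Define $d\mathbf{Q}=\mathcal{E}(\beta\cdot W)_T\,d\mathbf{P}$. Under $\mathbf{Q}$, the process $\widetilde W=W-\int\beta\,dr$ is a Brownian motion, and the equation becomes a BSDE with bounded Lipschitz generator $g(r,0,0)+\alpha Y$. Set $\Gamma(r)=e^{\int_s^r\alpha}$. Then
\begin{equation*}
\Gamma(r)Y(r)=\mathbb{E}^{\mathbf{Q}}\Bigl[\Gamma(T)\xi+\int_r^T\Gamma\, g(\cdot,0,0)\Big|\mathcal{F}_r\Bigr],
\end{equation*}
so $|Y(r)|\le e^{KT}\,\mathbb{E}^{\mathbf{Q}}[\Theta|\mathcal{F}_r]$ with $\Theta:=|\xi|+\int_s^T|g(\cdot,0,0)|$.

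\textbf{Returning to $\mathbf{P}$.} By Bayes' formula and H\"older's inequality with exponents $(p_0,\bar q)$, where $p_0<p_*$ is the conjugate exponent of $\bar q$,
\begin{equation*}
\mathbb{E}^{\mathbf{Q}}[\Theta|\mathcal{F}_r]\le \mathcal{E}_r^{-1}\,\mathbb{E}[\mathcal{E}_T^{p_0}|\mathcal{F}_r]^{1/p_0}\,\mathbb{E}[\Theta^{\bar q}|\mathcal{F}_r]^{1/\bar q}\le K(p_0,\cdot)^{1/p_0}\,\mathbb{E}[\Theta^{\bar q}|\mathcal{F}_r]^{1/\bar q},
\end{equation*}
using the reverse H\"older inequality. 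Taking $\sup_r$ and $p$-th powers, Doob's inequality applied to the martingale $\mathbb{E}[\Theta^{\bar q}|\mathcal{F}_r]$ (with exponent $p/\bar q$, or $p\bar q$ after an extra Jensen step to ensure the exponent exceeds $1$) gives
\begin{equation*}
\mathbb{E}[\sup_r|Y|^p|\mathcal{F}_s]\le C\,\mathbb{E}[\Theta^{p\bar q}|\mathcal{F}_s]^{1/\bar q}\le C\,\mathbb{E}[\Theta^{p\bar q^2}|\mathcal{F}_s]^{1/\bar q^2}.
\end{equation*}
The last step is conditional Jensen's inequality, which explains the form $\bar q^2$ in the statement.

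\textbf{The $Z$ estimate.} Apply It\^o's formula to $|Y|^2$ on $[s,T]$ to obtain
\begin{equation*}
\int_s^T|Z|^2\le |\xi|^2+2\sup|Y|\int|g(\cdot,0,0)|+2K\int|Y|^2+2\int H|Y||Z|+2\Bigl|\int YZ\,dW\Bigr|.
\end{equation*}
The cross term satisfies $2\int H|Y||Z|\le 2\sup|Y|^2\int H^2+\tfrac12\int|Z|^2$. This is the main obstacle: $\int H^2$ is not bounded, only in BMO. To handle it, we raise to the power $p/2$ and use the energy inequality for BMO martingales, $\mathbb{E}[(\int_s^TH^2)^k|\mathcal{F}_s]\le k!\,\|H\cdot W\|_{BMO_2}^{2k}$, together with Cauchy--Schwarz. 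This leaves a term $\mathbb{E}[\sup|Y|^{2p}|\mathcal{F}_s]^{1/2}$, which we control by the $Y$ bound already obtained (with $2p$ in place of $p$); one may need to slightly enlarge $\bar q$, which is allowed since $\bar q$ ranges over $[q_*,\infty)$. The stochastic integral term is handled by the conditional BDG inequality and Young's inequality, absorbing $\tfrac12\mathbb{E}[(\int|Z|^2)^{p/2}]$ into the left-hand side. For the absorption to be legitimate we first localize with stopping times so that all quantities are finite. Combining the $Y$ and $Z$ bounds yields the estimate, with constants depending only on $(p,\bar q,T,K,\|H\cdot W\|_{BMO_2})$.
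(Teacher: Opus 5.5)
The paper gives no proof of this lemma. It reads the result off from Theorem A.1 of \cite{FT18} with the jump terms removed, with \cite{AID07,BC08} behind the well-posedness. Your argument is sound but takes a different route.

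You cite only existence and uniqueness and derive the conditional estimate yourself:
\begin{itemize}
\item for $Y$, via the Girsanov linearization, one Bayes--H\"older step with the reverse H\"older inequality, and Doob;
\item for $Z$, via It\^o's formula for $|Y|^2$ under $\mathbf{P}$ and the BMO energy inequality, with no second change of measure.
\end{itemize}

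What your route buys is a self-contained proof and a sharper $Y$ bound, $\mathbb{E}[\sup_{s\le r\le T}|Y(r)|^p|\mathcal{F}_s^t]\le C\,\mathbb{E}[\Theta^{p\bar q}|\mathcal{F}_s^t]^{1/\bar q}$, with your $\Theta=|\xi|+\int_s^T|g(r,0,0)|\,dr$. The Doob step is cleanest as follows: with $M_r=\mathbb{E}[\Theta^{\bar q}|\mathcal{F}_r]$, use $\mathbb{E}[\sup_r M_r^{p/\bar q}|\mathcal{F}_s^t]\le\mathbb{E}[\sup_r M_r^{p}|\mathcal{F}_s^t]^{1/\bar q}$, then apply Doob with exponent $p$.

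What the citation buys is the whole range $\bar q\in[q_*,\infty)$ at once. As written, your proof needs two restrictions:
\begin{itemize}
\item $\bar q>q_*$ strictly, because Lemma \ref{properties of BMO martingales}(3) requires $p_0<p_*$;
\item $\bar q\ge 2$, to dominate $\mathbb{E}[\Theta^{2p\bar q}|\mathcal{F}_s^t]^{1/(2\bar q)}$ by $\mathbb{E}[\Theta^{p\bar q^2}|\mathcal{F}_s^t]^{1/\bar q^2}$ after your Cauchy--Schwarz split.
\end{itemize}
So you prove the estimate for some admissible $\bar q$. That is all Lemma \ref{priori estimate of BSDE} and the Appendix actually use, but it is not every $\bar q\ge q_*$.

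To recover the full range, split $\mathbb{E}[\sup|Y|^p(\int_s^TH^2\,dr)^{p/2}|\mathcal{F}_s^t]$ by H\"older with exponents $(a,a')$ instead of Cauchy--Schwarz. Then apply your $Y$ bound with exponent $pa$ and a H\"older exponent $q'\in(q_*,\bar q^2)$, and take $a=\bar q^2/q'>1$. The energy inequality handles the $H$ factor for any $a'$.

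You should also justify the representation $\Gamma(r)Y(r)=\mathbb{E}^{\mathbf{Q}}[\,\cdot\,|\mathcal{F}_r]$. It requires $\int\Gamma Z\,d\widetilde W$ to be a true $\mathbf{Q}$-martingale. This follows from BDG under $\mathbf{Q}$, since $\mathcal{E}(\beta\cdot W)_T\in L^{p_0}(\mathbf{P})$ and $Z\in\mathcal{M}^{2,p}$ for all large $p$.
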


\section{MP and DPP}

In the following, we introduce the risk-sensitive DPP and MP approaches for Problem(RS) in the literatures, respectively. 
Moon \cite{M21} studied a generalized risk-sensitive optimal control problem, where the objective functional is defined by the controlled quadratic BSDEs. 
A generalized risk-sensitive DPP for the value function was obtained. And the corresponding value function is a viscosity solution to the HJB equation. Under an additional parameter condition, the viscosity solution is unique. 
When $l$ is independent of $y$ and $z$ in (3) of \cite{M21}, the BSDE in (3) is reduced to the BSDE in \eqref{scalar-valued BSDE}. 
Thus, the DPP for Problem (RS) follows as a specialization of the result in \cite{M21}. For this purpose, we first define the family of backward semigroups associated with the BSDE \eqref{scalar-valued BSDE}. The notion of stochastic
backward semigroup was first introduced by Peng \cite{P97}.

Given the initial data $(t, x)$, a positive number $\tau \leq T - t$ and a real-valued random variable $b \in L^\infty(\Omega,\mathcal{F}_{t+\tau},\mathbf{P};\mathbb{R})$, we define the backward semigroup
\begin{equation*}
	\Pi_{s,t+\tau}^{t,x;u}[b] := \tilde{Y}^{t,x;u}(s), \qquad s \in [t, t + \tau],
\end{equation*}
where $(\tilde{Y}^{t,x;u}(s), \tilde{Z}^{t,x;u}(s))$ is the solution of the following BSDE with the time horizon $t + \tau$:
\begin{equation*}
\left\{
\begin{aligned}
	&d\tilde{Y}^{t,x;u}(s) 
	=  -\left[f(s, X^{t, x; u}(s), u(s)) + \frac{\mu}{2}|\tilde{Z}^{t,x;u}(s)|^2\right]ds +  \tilde{Z}^{t,x;u}(s)dW(s), \quad s \in [t, t + \tau], \\
	&\tilde{Y}^{t,x;u}(t+\tau) = b,
\end{aligned}
\right.
\end{equation*}
where $X^{t,x;u}(\cdot)$ is the solution of SDE \eqref{controlled state SDE}.

We now state the risk-sensitive DPP, which is of \cite{M21}, Theorem 1.
\begin{mylem}
Consider Problem (RS). Suppose that Assumptions 1 and 2 hold. For any $t, t + \tau \in [0, T]$ with $t < t + \tau$, and for any $x \in \mathbb{R}^n$, the value function in \eqref{value function} satisfies the following DPP:
\begin{equation}
	V(t, x) = \inf_{u \in \mathcal{U}^{w}[t, t + \tau]} \Pi_{t, t + \tau}^{t, x; u} \big[ V(t + \tau,  X^{t, x; u}(t + \tau)) \big].
\end{equation}
\end{mylem}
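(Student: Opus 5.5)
We prove the two inequalities $V(t,x)\le \inf_u \Pi^{t,x;u}_{t,t+\tau}[V(t+\tau,X^{t,x;u}(t+\tau))]$ and $\ge$ separately, following Peng's backward semigroup argument as adapted in \cite{M21} to the quadratic setting. Two facts come first. By Remark \ref{V is meaningful}, $V$ is deterministic. From Lemma \ref{priori estimate of BSDE} with $f_1=f_2$ and $h_1=h,h_2=h$ evaluated along two initial states, together with the Lipschitz conditions of Assumptions 1--2 and the SDE estimate of Lemma \ref{solvability of recursive FBSDE}, $V$ is bounded by $\|f\|_\infty T+\|h\|_\infty$ and Lipschitz (in fact H\"older suffices) in $x$ uniformly in $t$. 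Hence $V(t+\tau,X^{t,x;u}(t+\tau))\in L^\infty(\mathcal F_{t+\tau})$, and the semigroup $\Pi$ is well defined. We also need the comparison theorem for the quadratic BSDE, i.e. monotonicity of $\Pi$ in the terminal value, and its Lipschitz-type stability. Both come from the linearization in the proof of Lemma \ref{priori estimate of BSDE}. The difference of two solutions solves a linear BSDE with BMO coefficient $a$, so $\mathcal E(a\cdot W)$ is a true martingale by Lemma \ref{properties of BMO martingales}(2). Girsanov then yields $\delta Y(s)=\mathbb E^{\mathbf Q}[\delta b\mid\mathcal F_s]$, which gives monotonicity. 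Lemma \ref{solvability of Stochastic Lipschitz BSDEs} gives the $L^p$ stability.

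\emph{Step 1 (flow property).} For any $u$, uniqueness in Lemma \ref{solvability of recursive FBSDE} gives $Y^{t,x;u}(t)=\Pi^{t,x;u}_{t,t+\tau}[Y^{t,x;u}(t+\tau)]$. By the Markov/weak-formulation structure, $Y^{t,x;u}(t+\tau)=J(t+\tau,X^{t,x;u}(t+\tau);u|_{[t+\tau,T]})$, computed conditionally on $\mathcal F_{t+\tau}$, and this is $\ge V(t+\tau,X^{t,x;u}(t+\tau))$. Here we use that, conditionally on $\mathcal F^t_{t+\tau}$, the restricted control is a weak admissible control for the shifted Brownian motion. By comparison, $J(t,x;u)\ge \Pi^{t,x;u}_{t,t+\tau}[V(t+\tau,X^{t,x;u}(t+\tau))]$. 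Taking the infimum over $u$ gives $V(t,x)\ge\inf_u\Pi[\cdots]$.

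\emph{Step 2 (reverse inequality via $\varepsilon$-optimal pasting).} Fix $\varepsilon>0$ and $u\in\mathcal U^w[t,t+\tau]$. Partition $\mathbb R^n$ into Borel sets $\{O_j\}$ of diameter $<\delta$ with points $y_j\in O_j$. For each $j$, choose a control $v_j$ on $[t+\tau,T]$ that is $\varepsilon$-optimal for $V(t+\tau,y_j)$. We may realize these as progressively measurable functionals of the increments $W(\cdot)-W(t+\tau)$, which is allowed in the weak formulation via \cite{M21}'s setup. Define $\tilde u=u$ on $[t,t+\tau]$ and $\tilde u=\sum_j \mathbf 1_{O_j}(X^{t,x;u}(t+\tau))v_j$ afterwards. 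Since $U$ is compact, $\tilde u$ is admissible. Using the uniform-in-$x$ continuity of $J(t+\tau,\cdot;v)$ from Step 0, we get
$$Y^{t,x;\tilde u}(t+\tau)\le V(t+\tau,X^{t,x;u}(t+\tau))+\varepsilon+C\rho(\delta)\quad \text{a.s.},$$
with $\rho$ a modulus of continuity. By comparison and the stability estimate,
$$V(t,x)\le J(t,x;\tilde u)\le \Pi^{t,x;u}_{t,t+\tau}[V(t+\tau,X(t+\tau))]+C(\varepsilon+\rho(\delta)).$$
Let $\delta,\varepsilon\to0$ and take the infimum over $u$.

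\emph{Main obstacle.} The delicate step is Step 2. First, the pasted control must remain in $\mathcal U^w$, which requires a measurable selection and the conditional identity $Y^{t,x;\tilde u}(t+\tau)=\sum_j\mathbf 1_{O_j}\,Y^{t+\tau,X(t+\tau);v_j}(t+\tau)$. Second, we must pass from an almost-sure terminal bound to a bound on $\Pi$ in the quadratic setting. Comparison handles the latter cleanly, because adding a constant $c$ to the terminal value of the quadratic BSDE shifts $Y$ by exactly $c$ (the $Z$-term is unchanged), so the error passes through $\Pi$ with constant $1$. I would therefore lean on this translation invariance rather than on the $\bar q$-dependent estimate of Lemma \ref{priori estimate of BSDE}. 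The remaining technical issue is justifying the conditional identification of $Y(t+\tau)$ with the value on the subinterval. This follows from uniqueness of the quadratic BSDE on $[t+\tau,T]$ applied on each $\{X(t+\tau)\in O_j\}$, by the argument of \cite{M21}.
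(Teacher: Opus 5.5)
Your argument is correct in outline. The paper, however, gives no proof of this lemma: it imports it from \cite{M21} (Theorem 1), noting only that Moon's BSDE reduces to \eqref{scalar-valued BSDE} when $l$ does not depend on $(y,z)$. What you write is a self-contained version of the Peng backward-semigroup argument behind that theorem. The flow property plus comparison gives $V(t,x)\ge\inf_u\Pi[\cdots]$, and countable $\varepsilon$-optimal pasting plus translation invariance gives the reverse inequality. The citation buys brevity. Your route makes explicit something the paper leaves implicit: that the result holds in the weak formulation $\mathcal{U}^w$ actually used here, via regular conditional probabilities in Step 1 and the representation of the $v_j$ as functionals of $W(\cdot)-W(t+\tau)$ in Step 2.

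You could also simplify considerably. Since the generator contains no $y$, the transform $e^{\mu Y}$ (the one behind the equivalence of \eqref{risk-sensitive cost functional 2} and \eqref{recursive utility}) gives, for bounded $\xi$, $\Pi^{t,x;u}_{s,t+\tau}[\xi]=\mu^{-1}\log\mathbb{E}\big[\exp\{\mu\int_s^{t+\tau}f(r,X^{t,x;u}(r),u(r))\,dr+\mu\xi\}\mid\mathcal{F}^t_s\big]$. From this formula, boundedness of $f,h$, and the SDE estimate you get directly: monotonicity, exact translation invariance, the law invariance needed to transplant the $v_j$, and even the uniform-in-$u$ Lipschitz bound of $J$ in $x$. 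No BMO, Girsanov or $\bar q$-estimates are needed at all.

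Three minor points. First, if you keep Lemma~\ref{priori estimate of BSDE}, note that it is stated for a single forward process, so you need its verbatim extension to $f(r,X^{t,x_i;u}(r),u(r))$ and $h(X^{t,x_i;u}(T))$, $i=1,2$. Second, for $t+\tau=T$ you must read $V(T,\cdot)=h$, since \eqref{value function} defines $V$ only on $[0,T)$. Third, a tuple in $\mathcal{U}^w[t,t+\tau]$ carries a Brownian motion only up to $t+\tau$. Before pasting, pass to a product extension with an independent Brownian motion on $[t+\tau,T]$; this does not change $\Pi^{t,x;u}_{t,t+\tau}$.
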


Next we can state the HJB equation for Problem (RS):
\begin{equation}\label{HJB equation}
\begin{cases}
	v_t(t,x) + \inf\limits_{u \in U} G(t,x,u,v_x(t,x),v_{xx}(t,x))=0,\quad(t,x) \in [0,T) \times \mathbb{R}^n,\\
	v(T,x)=h(x),\quad x \in \mathbb{R}^n,
\end{cases}
\end{equation}
where the generalized Hamiltonian function $G:[0,T] \times \mathbb{R}^n \times U \times \mathbb{R}^n \times \mathbb{S}^n \to \mathbb{R}$ is defined by
\begin{equation}\label{generalized Hamiltonian}
\begin{aligned}
	G(t,x,u,p,P)&:= f(t,x,u) + \langle p ,b(t,x,u) \rangle  + \frac{\mu}{2}|\sigma(t,x,u)^{\top}p|^2 \\
	&\quad+ \frac{1}{2}\operatorname{tr}(\sigma(t,x,u)\sigma(t,x,u)^{\top}P).	
\end{aligned}	
\end{equation}
 
Now, we introduce the following definition of viscosity solution to HJB equation
\eqref{HJB equation}, which can be found in \cite{CIL92}.
\begin{mydef}
(i) A real-valued function $v \in C([0,T] \times \mathbb{R}^{n})$ is called a viscosity subsolution of \eqref{HJB equation} if $v(T,x) \leq h(x)$ for all $x \in \mathbb{R}^{n}$, and for all $\varphi \in C^{1,2}([0,T] \times \mathbb{R}^{n})$ such that $v-\varphi$ attains a local maximum at $(t,x) \in [0,T) \times \mathbb{R}^{n}$, we have
\begin{equation*}
\begin{aligned}
	\varphi_{t}(t,x) + \inf\limits_{u \in U} G(t,x, u ,\varphi_{x}(t,x),\varphi_{xx}(t,x)) \geq 0.	
\end{aligned}
\end{equation*}
(ii) A real-valued function $v \in C([0,T] \times \mathbb{R}^{n})$ is called a viscosity supersolution of \eqref{HJB equation} if $v(T,x) \geq h(x)$ for all $x \in \mathbb{R}^{n}$, and for all $\varphi \in C^{1,2}([0,T] \times \mathbb{R}^{n})$ such that $v-\varphi$ attains a local minimum at $(t,x) \in [0,T) \times \mathbb{R}^{n}$, we have
\begin{equation*}
\begin{aligned}
	\varphi_{t}(t,x) + \inf\limits_{u \in U} G(t,x, u ,\varphi_{x}(t,x),\varphi_{xx}(t,x)) \leq 0.	
\end{aligned}
\end{equation*}
(iii) A real-valued function $v \in C([0,T] \times \mathbb{R}^{n})$ is called a viscosity solution of \eqref{HJB equation} if it is both a viscosity subsolution and viscosity supersolution to \eqref{HJB equation}.
\end{mydef}

\begin{Remark}
The viscosity solution of the HJB equation \eqref{HJB equation} can be equivalently defined in the language of sub- and super-jets (see \cite{CIL92}).
\end{Remark}

The following result is of \cite{M21}, Theorem 2 and Proposition 1.
\begin{mylem}
Suppose that Assumption 1 and 2 hold. Then  the value function in \eqref{value function} is a viscosity solution to the HJB equation in \eqref{HJB equation}. Moreover, there exists a constant $\mu^*>0$ such that for $\mu \in [0, \mu^*)$, the value function is a unique viscosity solution to the HJB equation.
\end{mylem}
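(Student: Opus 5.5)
The statement just before this point is the cited lemma from \cite{M21}: the value function $V$ is a viscosity solution of \eqref{HJB equation}, and it is the unique one for $\mu\in[0,\mu^*)$. I would follow the standard route of \cite{YZ99} and \cite{M21}, in four steps.

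\emph{Step 1: regularity of $V$.} First I would show that $V$ is bounded and continuous, since this is required by the definition of viscosity solution.
\begin{itemize}
\item Boundedness follows from Lemma \ref{solvability of recursive FBSDE}.
\item For Lipschitz continuity in $x$, fix $u(\cdot)$ and compare the two quadratic BSDEs started at $x$ and $x'$. Lemma \ref{priori estimate of BSDE} does not apply verbatim, because the forward processes differ. Instead I would linearize the quadratic term as in its proof, producing a BMO coefficient $a$, and apply Lemma \ref{solvability of Stochastic Lipschitz BSDEs}. Combined with the $L^p$ estimate $\mathbb{E}\sup|X^{t,x;u}-X^{t,x';u}|^p\le C|x-x'|^p$, this gives $|J(t,x;u)-J(t,x';u)|\le C|x-x'|$ uniformly in $u$.
\item For $1/2$-H\"older continuity in $t$, use the DPP on $[t,t+\tau]$ together with the same estimates and boundedness of $f$.
\end{itemize}

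\emph{Step 2: subsolution property.} Let $\varphi\in C^{1,2}$ be such that $V-\varphi$ attains a local maximum at $(t,x)$; after the usual modification I may take it to be global with $V(t,x)=\varphi(t,x)$. Fix a constant control $u\in U$. The DPP gives $\varphi(t,x)\le \Pi^{t,x;u}_{t,t+\tau}[\varphi(t+\tau,X(t+\tau))]$, using the comparison theorem for quadratic BSDEs, which holds via the same linearization.
\begin{itemize}
\item Apply It\^o's formula to $\varphi(s,X(s))$ and set $\hat Y(s)=\tilde Y(s)-\varphi(s,X(s))$, $\hat Z(s)=\tilde Z(s)-\sigma^\top\varphi_x(s,X(s))$.
\item Then $(\hat Y,\hat Z)$ solves a BSDE with generator $\varphi_t+G(s,X,u,\varphi_x,\varphi_{xx})+\mu\langle\sigma^\top\varphi_x,\hat Z\rangle+\frac{\mu}{2}|\hat Z|^2$ and terminal value $0$.
\item Linearize once more. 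Then $\mathbb{E}\int_t^{t+\tau}|\hat Z|^2\,ds=o(\tau)$, with the BMO bounds ensuring that the Girsanov density has moments.
\item Hence $0\le \hat Y(t)=\mathbb{E}^{\mathbb{Q}}\int_t^{t+\tau}[\varphi_t+G(s,X,u,\ldots)]\,ds+o(\tau)$. Dividing by $\tau$ and letting $\tau\to0$ gives $\varphi_t+G(t,x,u,\varphi_x,\varphi_{xx})\ge0$ for every $u$, so the infimum over $u$ is also $\ge 0$.
\end{itemize}

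\emph{Step 3: supersolution property.} This is symmetric, but now an $\varepsilon\tau$-optimal control $u^\varepsilon$ on $[t,t+\tau]$ is used. The resulting bound involves $\mathbb{E}^{\mathbb{Q}}\int[\varphi_t+G(s,X,u^\varepsilon(s),\ldots)]\,ds\le \varepsilon\tau+o(\tau)$ uniformly in the control. I would conclude by bounding $G(s,X,u^\varepsilon)\ge \inf_U G(t,x,\cdot)-\rho(\tau)$, using uniform continuity and compactness of $U$.

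\emph{Step 4: uniqueness for $\mu<\mu^*$.} I would not use a direct comparison principle, because the gradient term $\frac{\mu}{2}|\sigma^\top p|^2$ is quadratic. Instead I would apply the exponential transform $w=e^{\mu v}$, under which the HJB equation becomes $w_t+\inf_U[\mu f w+\langle w_x,b\rangle+\frac12\operatorname{tr}(\sigma\sigma^\top w_{xx})]=0$. This equation is linear in $(w,w_x,w_{xx})$, and the transform preserves the viscosity property because it is a $C^\infty$ increasing bijection. Standard comparison for bounded, positive, uniformly continuous solutions then gives uniqueness. The restriction to small $\mu$ enters in verifying the structure and growth conditions of the Crandall--Ishii--Lions comparison argument with constants controlled uniformly; this is where $\mu^*$ appears.

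The main obstacle is the $o(\tau)$ estimate in Steps 2 and 3. Because the generator is quadratic, the estimate needs the BMO reverse H\"older machinery from Lemma \ref{properties of BMO martingales}, with constants uniform in the control, rather than plain Lipschitz estimates.
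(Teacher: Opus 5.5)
Your proposal is correct in substance, but it is not what the paper does. The paper gives no argument of its own: it imports the statement from \cite{M21} (Theorem 2 and Proposition 1), whose generator may depend on $(y,z)$.

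\textbf{Steps 1--3.} These are the standard backward-semigroup argument that underlies the existence part, and they work, with two remarks.
\begin{itemize}
\item Your ``usual modification'' must produce a $\varphi$ that is bounded and has compactly supported derivatives. Otherwise $\varphi(t+\tau,X(t+\tau))$ is not an admissible $L^\infty$ terminal value for $\Pi$. Moreover, since $\sigma$ grows linearly, $(\sigma^\top\varphi_x)\cdot W$ would not be BMO.
\item The $o(\tau)$ estimate you call the main obstacle can be avoided. Put the whole $z$-dependence $\langle\mu\sigma^\top\varphi_x+\frac{\mu}{2}\hat Z,\hat Z\rangle$ into the Girsanov drift, as in the proof of Lemma \ref{priori estimate of BSDE}. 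This gives the exact identity $\hat Y(t)=\mathbb{E}^{\mathbb{Q}}\int_t^{t+\tau}(\varphi_t+G)(r,X(r),u(r),\varphi_x,\varphi_{xx})\,dr$. Reverse H\"older with control-uniform BMO bounds then yields both viscosity inequalities.
\item If you keep your version, note that $\mathbb{E}\int|\hat Z|^2=o(\tau)$ alone does not pass under $\mathbb{Q}$. You need a higher moment, which Lemma \ref{solvability of Stochastic Lipschitz BSDEs} does provide, at order $\tau^2$.
\end{itemize}

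\textbf{Step 4.} This is where you genuinely depart from \cite{M21}, and it proves more than you say. After $w=e^{\mu v}$ and the further change $e^{\lambda t}w$ with $\lambda>\mu\|f\|_\infty$, you get a Bellman equation that, for each fixed $u$, is linear in $(w,w_x,w_{xx})$. Its coefficients are Lipschitz and its zeroth-order coefficient $\mu f-\lambda$ is strictly negative. Comparison for bounded continuous sub/supersolutions therefore holds under Assumptions 1--2 for \emph{every} $\mu\ge 0$; for $\mu=0$ no transform is needed.

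So your sentence saying that smallness of $\mu$ ``enters in verifying the structure and growth conditions'' is mistaken: nothing in your argument uses it. You obtain uniqueness among bounded continuous viscosity solutions for all $\mu\ge0$, and the existence of some $\mu^*$ follows trivially.

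\textbf{What each route buys.} Citing \cite{M21} covers generators depending on $(y,z)$, where this Cole--Hopf linearization is unavailable. In that setting the parameter restriction is genuinely needed. Your route is self-contained and sharper, but it works only because here $f$ does not depend on $(y,z)$.
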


\begin{Remark} 
The parameter $\mu^*$ is closely related to break down and conjugate points in a class of risk-sensitive and stochastic differential games (see \cite{B99}, \cite{FS00}, \cite{MDB19}, \cite{N96}).
\end{Remark}

Hu et al. \cite{HJX22} studied a stochastic optimal control problem for forward-backward control systems with quadratic generators. A global stochastic maximum principle is deduced (Theorem 3.16 in \cite{HJX22}). If we replace $f(s,x,y,z,u)$ in (2.7) of \cite{HJX22} with $f(s, x, u) + \frac{\mu}{2}|z|^2$ so that the state equation is consistent with \eqref{recursive FBSDE}. Thus we can obtain the MP of our Problem (RS).

First, we need introduce the following assumption.\\
$\mathbf{Assumption\ 4}$ $b,\sigma,f,h$ are twice continuously differentiable with respect to $x$. Their first and second-order partial derivatives in $x$ are uniformly bounded, Lipschitz continuous in $(x,u)$.

Let $(\bar{X}^{t,x;\bar{u}}(\cdot),\bar{Y}^{t,x;\bar{u}}(\cdot),\bar{Z}^{t,x;\bar{u}}(\cdot),\bar{u}(\cdot))$ be an optimal quadruple. 
For any $s \in [t,T]$, we denote $\bar{b}(s):=b(s,\bar{X}^{t,x;\bar{u}}(s),\bar{u}(s))$, $\bar{\sigma}(s):=\sigma(s,\bar{X}^{t,x;\bar{u}}(s),\bar{u}(s))$, $\bar{f}(s):= f(s,\bar{X}^{t,x;\bar{u}}(s),\bar{u}(s))$, and similar notations used for all their derivatives. For given $(t,x) \in [0,T) \times \mathbb{R}^n$, we introduce the following first-order adjoint process $(p(\cdot),q(\cdot)) $ satisfying
\begin{equation}\label{the first-order adjoint equation}
\left\{
\begin{aligned}
	dp(s) &= -\bigg[\bar{b}_x(s)^{\top}p(s) + \bar{f}_x(s) + \bar{\sigma}_x(s)^{\top}q(s)\\
          &\qquad + \mu \bar{Z}^{t,x;\bar{u}}(s)\left(\bar{\sigma}_x(s)^{\top}p(s) + q(s)\right)\bigg]ds + q(s)dW(s),\\
	p(T) &= h_x(\bar{X}^{t,x;\bar{u}}(T)),
\end{aligned}
\right.
\end{equation}
and the second-order adjoint process $(P(\cdot),Q(\cdot)) $ satisfying
\begin{equation}\label{the second-order adjoint equation}
\left\{
\begin{aligned}
	dP(s) &= -\bigg[\bar{b}_x(s)^{\top}P(s) +P(s)\bar{b}_x(s) + \bar{\sigma}_x(s)^{\top}\left(P(s) + \mu p(s)p(s)^{\top}\right)\bar{\sigma}_x(s) \\ 
	&\qquad + \bar{\sigma}_x(s)^{\top}\left(Q(s) + \mu P(s)\bar{Z}^{t,x;\bar{u}}(s) + \mu p(s)q(s)^{\top}\right)\\ 
	&\qquad + \left(Q(s) + \mu P(s)\bar{Z}^{t,x;\bar{u}}(s) + \mu q(s)p(s)^{\top}\right)\bar{\sigma}_x(s) \\
	&\qquad +\mu q(s)q(s)^{\top} + \bar{H}_{xx}(s) + \mu Q(s)\bar{Z}^{t,x;\bar{u}}(s)\bigg]ds + Q(s)dW(s), \\
	P(T) &= h_{xx}(\bar{X}^{t,x;\bar{u}}(T)),
\end{aligned}
\right.
\end{equation}
where $\bar{H}(s):= H(s,\bar{X}^{t,x;\bar{u}}(s),\bar{Z}^{t,x;\bar{u}}(s),\bar{u}(s),p(s),q(s))$ with the Hamiltonian function $H$ defined by
\begin{equation}\label{risk-sensitive Hamiltonian}
	H(s,x,z,u,p,q):= \langle p ,b(s,x,u) \rangle + f(s,x,u) + q^\top\sigma(s,x,u) + \mu \sigma(s,x,u)^\top p z.	
\end{equation}

Then, we have the following MP for our Problem (RS) (see Hu et al. \cite{HJX22}).
\begin{mylem}\label{risk-sensitive MP}
Suppose that Assumptions 1, 2 and 4 hold and let $(t,x) \in [0,T) \times \mathbb{R}^n$ be fixed. 
Let  $(\bar{X}^{t,x;\bar{u}}(\cdot),\bar{Y}^{t,x;\bar{u}}(\cdot),\bar{Z}^{t,x;\bar{u}}(\cdot),\bar{u}(\cdot))$ be an optimal quadruple for our Problem (RS). 
Then for $p>1$, there exist unique solutions
\begin{equation*}
\left\{
\begin{aligned}
	&(p(\cdot),q(\cdot)) \in  L^\infty_{\mathcal{F}}([t,T];\mathbb{R}^n) \times \mathcal{M}^{2,p}_{\mathcal{F}}([t,T];\mathbb{R}^n),\\
	&(P(\cdot),Q(\cdot)) \in \mathcal{S}^{p}_{\mathcal{F}}([t,T];\mathbb{S}^n) \times \mathcal{M}^{2,p}_{\mathcal{F}}([t,T];\mathbb{S}^n),
\end{aligned}
\right.
\end{equation*}
to the first-order adjoint equation \eqref{the first-order adjoint equation} and the second-order adjoint equation \eqref{the second-order adjoint equation}, respectively, such that the following inequality holds: 
\begin{equation}\label{maximum condition-mathcal H}
\begin{aligned}
	&\mathcal{H}(s,\bar{X}^{t,x;\bar{u}}(s),\bar{Z}^{t,x;\bar{u}}(s),\bar{u}(s),p(s),q(s),P(s))\\ 
    &\leq \mathcal{H}(s,\bar{X}^{t,x;\bar{u}}(s),\bar{Z}^{t,x;\bar{u}}(s),u,p(s),q(s),P(s)),\quad\forall u \in U, \ \text{a.e.} s \in [t,T],\ \mathbf{P}\text{-a.s.}, 
\end{aligned}	
\end{equation}
where
\begin{equation}\label{mathcal H}
\begin{aligned}
	\mathcal{H}(s,x,z,u,p,q,P)&:= \langle p ,b(s,x,u) \rangle + f(s,x,u) + q^\top\sigma(s,x,u) + \mu (\sigma(s,x,u)-\bar{\sigma}(s))^\top p z+\frac{\mu}{2}|z|^2\\
	&\quad+\frac{1}{2}\left[\left(\sigma(s,x,u)-\bar{\sigma}(s)\right)^\top\left(P(s) + \mu p(s)p(s)^\top\right)\left(\sigma(s,x,u)-\bar{\sigma}(s)\right)\right].
\end{aligned}	
\end{equation}
\end{mylem}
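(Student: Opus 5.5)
This lemma is a specialization of the global maximum principle of Hu et al. \cite{HJX22} (their Theorem 3.16). The plan is therefore to verify that Problem (RS), written as the recursive problem \eqref{recursive FBSDE}, fits their framework. Then we check that the adjoint equations and the $\mathcal{H}$-function in \cite{HJX22} reduce to \eqref{the first-order adjoint equation}, \eqref{the second-order adjoint equation} and \eqref{mathcal H}.

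\textbf{Step 1: the hypotheses.} We set the generator $g(s,x,y,z,u):=f(s,x,u)+\frac{\mu}{2}|z|^2$ and check the assumptions of \cite{HJX22}.
\begin{itemize}
\item Assumptions 1 and 4 give Lipschitz continuity and bounded first and second $x$-derivatives for $b,\sigma$.
\item Assumption 2 together with Assumption 4 gives boundedness of $f,h$, with bounded, Lipschitz derivatives in $x$.
\item $g$ does not depend on $y$. It is quadratic in $z$ with $g_z=\mu z$, $g_{zz}=\mu$, $g_{xz}=0$, and $g_{xx}=f_{xx}$ is bounded.
\item Lemma \ref{solvability of recursive FBSDE} provides $\bar Y\in L^\infty$ and $\bar Z\cdot W\in BMO$, which is exactly the setting of \cite{HJX22}.
\end{itemize}

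\textbf{Step 2: the first-order adjoint equation.} Substituting $g$ into their equation, we first note that the $g_y$ terms vanish. The $g_z$ term $\mu\bar Z$ multiplies $(\bar\sigma_x^\top p+q)$, which gives \eqref{the first-order adjoint equation}. Its coefficient $\mu\bar Z$ is unbounded but BMO. Lemma \ref{solvability of Stochastic Lipschitz BSDEs}, applied componentwise to the linear BSDE, yields existence and uniqueness in $\mathcal{S}^p\times\mathcal{M}^{2,p}$.

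Boundedness of $p$ needs a separate argument. We change measure by the Girsanov density $\mathcal{E}(\mu\bar Z\cdot W)$, a true martingale by Lemma \ref{properties of BMO martingales}(2). Under the new measure the drift term in $q$ disappears, and $p$ solves a linear BSDE with bounded coefficients $\bar b_x,\bar\sigma_x,\bar f_x$ and bounded terminal value $h_x$. A further linear transformation $\Phi$ (the solution of the homogeneous linear SDE) then represents $p(s)$ as a conditional expectation of bounded quantities, so $p\in L^\infty$.

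\textbf{Step 3: the second-order adjoint equation.} Expanding the second-order terms of \cite{HJX22} with $g_{zz}=\mu$, their "$\tilde q$" is $\bar\sigma_x^\top p+q$. This produces the terms $\mu(\bar\sigma_x^\top p+q)(\bar\sigma_x^\top p+q)^\top$, which reorganize into $\bar\sigma_x^\top\mu pp^\top\bar\sigma_x$, the cross terms $\mu pq^\top,\mu qp^\top$, and $\mu qq^\top$. The terms $\mu \bar Z(\bar\sigma_x^\top P+P\bar\sigma_x+Q)$ come from $g_z$. Together these give \eqref{the second-order adjoint equation}, where $\bar H_{xx}$ collects $\langle p,b_{xx}\rangle+f_{xx}+q^\top\sigma_{xx}+\mu\bar Z\sigma_{xx}^\top p$. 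Since $q\in\mathcal{M}^{2,p}$ for all $p$ and $p$ is bounded, the generator's free term lies in all $L^p$. Lemma \ref{solvability of Stochastic Lipschitz BSDEs} then gives unique $(P,Q)$ in the stated spaces.

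\textbf{Step 4: the maximum condition.} The variational inequality of \cite{HJX22} states that the spike perturbation increases $Y(t)$ by $\mathbb{E}[\int(\mathcal{H}(u)-\mathcal{H}(\bar u))\mathbf{1}_{E_\varepsilon}ds]+o(\varepsilon)$. In their formula, the $z$-argument of $g$ is evaluated at $\bar Z+p^\top(\sigma(u)-\bar\sigma)$. Expanding $\frac{\mu}{2}|\bar Z+p^\top\delta\sigma|^2$ gives $\frac{\mu}{2}|\bar Z|^2+\mu\delta\sigma^\top p\bar Z+\frac{\mu}{2}\delta\sigma^\top pp^\top\delta\sigma$, which matches \eqref{mathcal H} together with the $\frac12\delta\sigma^\top P\delta\sigma$ term. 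Lebesgue differentiation and a standard separability argument over a countable dense subset of $U$ then yield \eqref{maximum condition-mathcal H} a.e., $\mathbf{P}$-a.s.

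The main obstacle is Step 2's $L^\infty$ bound on $p$ and the integrability of $(P,Q)$ under BMO-unbounded coefficients. The Girsanov argument combined with the reverse H\"older inequality of Lemma \ref{properties of BMO martingales}(3) should handle both.
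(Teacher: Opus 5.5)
Your route is the paper's. The paper gives no argument beyond substituting the generator $f(s,x,u)+\frac{\mu}{2}|z|^2$ into Theorem 3.16 of \cite{HJX22}, and your reductions are right:
$g_z=\mu\bar Z$ produces the $\mu\bar Z(\bar\sigma_x^\top p+q)$ and $\mu\bar Z(\bar\sigma_x^\top P+P\bar\sigma_x+Q)$ terms; $g_{zz}=\mu$ produces $\mu(\bar\sigma_x^\top p+q)(\bar\sigma_x^\top p+q)^\top$; and expanding $\frac{\mu}{2}|\bar Z+p^\top\delta\sigma|^2$ gives $\mathcal{H}$. One small correction in Step 4: the first-order expansion of $Y(t)$ carries the positive weight $\mathcal{E}(\mu\bar Z\cdot W)$, i.e.\ the expectation is under the Girsanov measure. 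This does not affect the pointwise conclusion.

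The self-contained arguments you add in Steps 2--3, however, fail as written.

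\textbf{Lemma \ref{solvability of Stochastic Lipschitz BSDEs} does not apply.} It is a scalar result with a \emph{constant} Lipschitz bound $K$ in $y$. The $p$-equation is an $\mathbb{R}^n$-valued system, coupled through $\bar b_x^\top p$ and $\bar\sigma_x^\top q$, and it contains $\mu\bar Z\bar\sigma_x^\top p$, whose $y$-coefficient is unbounded. So the lemma cannot be applied componentwise. The same problem arises for $(P,Q)$ with the term $\mu\bar Z(\bar\sigma_x^\top P+P\bar\sigma_x)$.

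\textbf{The Girsanov step for $p\in L^\infty$ is incorrect.} With $\tilde W=W-\int\mu\bar Z\,dr$, only $\mu\bar Z q$ is absorbed. Under the new measure the drift is still $\bar b_x^\top p+\bar f_x+\bar\sigma_x^\top q+\mu\bar Z\bar\sigma_x^\top p$, so the coefficients are not bounded. In any case $\Phi$ is never bounded; only its conditional moments are.

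\textbf{A working fix.} Test $p$ against the linearized state under the \emph{original} measure. For bounded $\mathcal{F}_s^t$-measurable $\xi$, let $dX^1=\bar b_xX^1dr+\bar\sigma_xX^1dW$ with $X^1(s)=\xi$. It\^o's formula shows that $Y^1:=p^\top X^1$ and $Z^1:=(\bar\sigma_x^\top p+q)^\top X^1$ solve the scalar BSDE
$dY^1=-[\bar f_x^\top X^1+\mu\bar Z Z^1]dr+Z^1dW$, $Y^1(T)=h_x^\top X^1(T)$.
Here the $\bar b_x$ and $\bar\sigma_x^\top q$ terms cancel, and $\mu\bar Z\bar\sigma_x^\top p$ merges with $\mu\bar Z q$ into $\mu\bar Z Z^1$, so $\mu\bar Z$ multiplies only $Z^1$. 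Hence
\[
p(s)^\top\xi=\mathbb{E}\Big[\frac{\mathcal{E}(\mu\bar Z\cdot W)_T}{\mathcal{E}(\mu\bar Z\cdot W)_s}\Big(h_x(\bar X^{t,x;\bar u}(T))^\top X^1(T)+\int_s^T\bar f_x(r)^\top X^1(r)\,dr\Big)\,\Big|\,\mathcal{F}_s^t\Big].
\]
Combine the reverse H\"older inequality of Lemma \ref{properties of BMO martingales}(3) with the $\mathbf{P}$-moment bounds for $X^1$, which hold because its coefficients are bounded. This gives $|p(s)^\top\xi|\le C|\xi|$, i.e.\ $p\in L^\infty$. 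The same works for $X^{1\top}PX^1$, where the $\mu\bar Z$ terms again become a pure $z$-coefficient.

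For existence and uniqueness of $(p,q)$ and $(P,Q)$ themselves, you have two options. Either invoke \cite{HJX22}, as the paper does. Or use a multidimensional linear BSDE result that allows a random $y$-coefficient $\beta$ with $\exp(c\int_s^T|\beta|\,dr)$ in every $L^k$; this holds for $\beta=\mu\bar Z\bar\sigma_x$ by the John--Nirenberg inequality.
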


\section{Main result}

In this subsection, we investigate the relationship between the MP and the DPP. We first recall the notion of the right parabolic superjet of a continuous function on $[0,T)\times\mathbb{R}^{n}$.
For $v\in C\left([0,T]\times\mathbb{R}^{n}\right)$ and $(\hat{t},\hat{x})\in[0,T)\times\mathbb{R}^{n}$, the right parabolic superjet of $v$ at $(\hat{t},\hat{x})$ is the set triple
\begin{equation*}
\begin{aligned}
	D_{t+,x}^{1,2,+} v(\hat{t},\hat{x}) :=& \biggl\{(q,p,P) \in \mathbb{R} \times \mathbb{R}^n \times \mathbb{S}^{n} | v(t,x) \leq v(\hat{t},\hat{x}) + q(t-\hat{t}) +  \langle p,x-\hat{x} \rangle \\
	&\quad+ \frac{1}{2}(x-\hat{x})^{\top}P(x-\hat{x}) + o\left(|t-\hat{t}|+ |x-\hat{x}|^2 \right), \text{ as } t \downarrow \hat{t}, x \to \hat{x} \biggl\},
\end{aligned}
\end{equation*}
and the right parabolic subjet of $v$ at $(\hat{t},\hat{x})$ is the set triple
\begin{equation*}
\begin{aligned}
	D_{t+,x}^{1,2,-} v(\hat{t},\hat{x}) :=& \biggl\{(q,p,P) \in \mathbb{R} \times \mathbb{R}^n \times \mathbb{S}^{n} | v(t,x) \geq v(\hat{t},\hat{x}) + q(t-\hat{t}) +  \langle p,x-\hat{x} \rangle \\
	&\quad+ \frac{1}{2}(x-\hat{x})^{\top}P(x-\hat{x}) + o\left(|t-\hat{t}|+ |x-\hat{x}|^2 \right), \text{ as } t \downarrow \hat{t}, x \to \hat{x} \biggl\}.
\end{aligned}
\end{equation*}
From the above definitions, we see immediately that
\begin{equation*}
\left\{\begin{array}{l}
	D_{t+,x}^{1,2,+}v(\hat{t},\hat{x})+[0,\infty)\times\{0\}\times\mathbb{S}^{n}_{+}=D_{t+,x}^{1,2,+}v(\hat{t},\hat{x}),\\
	D_{t+,x}^{1,2,-}v(\hat{t},\hat{x})-[0,\infty)\times\{0\}\times\mathbb{S}^{n}_{+}=D_{t+,x}^{1,2,-}v(\hat{t},\hat{x}),
\end{array}\right.
\end{equation*}
where $\mathbb{S}^{n}_{+}:=\{S \in \mathbb{S}^{n}|S\geq0\}$, and $A\pm B:=\{ a \pm b \mid a \in A,\ b \in B \}$ for any subsets \(A\) and \(B\) in a same Euclidean space.

We will also make use of the partial super/sub-jets with respect to one of the variables $t$ and $x$. Therefore, we need the following definitions. 
\begin{equation}\label{second-order super- and sub-jets in the spatial variable}
\left\{
\begin{aligned}
	D_x^{2,+} v(\hat{t},\hat{x}) &:= \biggl\{(p,P) \in \mathbb{R}^n \times \mathbb{S}^{n} | v(\hat{t},x) \leq v(\hat{t},\hat{x}) + \langle p,x-\hat{x} \rangle \\
	&\qquad+ \frac{1}{2}(x-\hat{x})^{\top}P(x-\hat{x}) + o\left( |x-\hat{x}|^2 \right), \text{ as } x \to \hat{x} \biggl\}, \\
	D_x^{2,-} v(\hat{t},\hat{x}) &:= \biggl\{(p,P) \in \mathbb{R}^n \times \mathbb{S}^{n} | v(\hat{t},x) \geq v(\hat{t},\hat{x}) + \langle p,x-\hat{x} \rangle \\
	&\qquad+ \frac{1}{2}(x-\hat{x})^{\top}P(x-\hat{x}) + o\left( |x-\hat{x}|^2 \right), \text{ as } x \to \hat{x}\biggl\},
\end{aligned}
\right.
\end{equation}
\begin{equation}
\left\{
\begin{aligned}
	D_x^{1,+} v(\hat{t},\hat{x}) &:= \biggl\{p \in \mathbb{R}^n| v(\hat{t},x) \leq v(\hat{t},\hat{x}) + \langle p,x-\hat{x} \rangle +o\left( |x-\hat{x}| \right), \text{ as } x \to \hat{x} \biggl\}, \\
	D_x^{1,-} v(\hat{t},\hat{x}) &:= \biggl\{p\in \mathbb{R}^n | v(\hat{t},x) \geq v(\hat{t},\hat{x}) + \langle p,x-\hat{x} \rangle + o\left( |x-\hat{x}| \right), \text{ as } x \to \hat{x}\biggl\},
\end{aligned}
\right.
\end{equation}
and
\begin{equation}\label{right super- and sub-jets in the time variable}
\left\{
\begin{aligned}
	D_{t+}^{1,+}v(\hat{t}, \hat{x}) & := \left\{q \in \mathbb{R} : v(t, \hat{x}) \leq v(\hat{t}, \hat{x}) + q(t - \hat{t}) 
	+ o\left(|t - \hat{t}|\right) \text{ as } t \downarrow \hat{t}\right\},\\
	D_{t+}^{1,-}v(\hat{t}, \hat{x}) & := \left\{q \in \mathbb{R} : v(t, \hat{x}) \geq v(\hat{t}, \hat{x}) + q(t - \hat{t}) 
	+ o\left(|t - \hat{t}|\right) \text{ as } t \downarrow \hat{t}\right\}.
\end{aligned}
\right.
\end{equation}

The following theorem shows that the adjoint variables $p$, $P$ and the value function $V$ are connected within the framework of the superjet and the subjet in the state variable $x$ along an optimal trajectory.
The proof is left in the Appendix.

\begin{mythm}\label{relation of MP and DPP in spatial variable}
Suppose Assumptions 1, 2 and 4 hold and let $(t,x) \in [0,T) \times \mathbb{R}^{n}$ be fixed. 
Suppose that $\bar{u}(\cdot)$ is an optimal control for Problem (RS), and $(\bar{X}^{t,x;\bar{u}}(\cdot),\bar{Y}^{t,x;\bar{u}}(\cdot),\bar{Z}^{t,x;\bar{u}}(\cdot))$ is the optimal trajectory. 
Let $(p(\cdot),q(\cdot)) \in \bigcap_{p > 1} \left( L^\infty_{\mathcal{F}}([t,T];\mathbb{R}^n) \times \mathcal{M}_\mathcal{F}^{2, p}([t, T]; \mathbb{R}^n) \right)$ 
and $(P(\cdot),Q(\cdot)) \in \bigcap_{p > 1} \left( \mathcal{S}_\mathcal{F}^p([t, T]; \mathbb{S}^n) \times \mathcal{M}_\mathcal{F}^{2, p}([t, T]; \mathbb{S}^n) \right)$ be the first-order and second-order adjoint process satisfying \eqref{the first-order adjoint equation} and \eqref{the second-order adjoint equation}, respectively. Then
\begin{equation}\label{relation of adjoint and second-order super and sub-jets in spatial variable}
\begin{aligned}
	\{p(s)\} \times [P(s),\infty) \subseteq D_x^{2,+} V(s,\bar{X}^{t,x;\bar{u}}(s)), && \forall s \in [t,T],\ \mathbf{P}\text{-a.s.}, \\
	D_x^{2,-} V(s,\bar{X}^{t,x;\bar{u}}(s)) \subseteq \{p(s)\} \times (-\infty,P(s)], && \forall s \in [t,T],\ \mathbf{P}\text{-a.s.}.
\end{aligned}
\end{equation}
We also have
\begin{equation}\label{relation of adjoint and first-order super and sub-jets in spatial variable}
\begin{aligned}
	D_x^{1,-}V(s,\bar{X}^{t,x;\bar{u}}(s)) \subseteq \{p(s)\} \subseteq D_x^{1,+} V(s,\bar{X}^{t,x;\bar{u}}(s)), && \forall s \in [t,T],\ \mathbf{P}\text{-a.s.}.
\end{aligned}
\end{equation}
\end{mythm}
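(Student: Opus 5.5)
The plan follows Zhou's approach (see \cite{YZ99}, Chapter 5) and its recursive version in \cite{NSW17,HJX20}. The idea is to perturb the state at the intermediate time $s$ and keep the optimal control on $[s,T]$.

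\textbf{Step 1: Markovian restart at time $s$.} Fix $s\in[t,T)$; the case $s=T$ is trivial since $V(T,\cdot)=h$, $p(T)=h_x$, $P(T)=h_{xx}$. Use the Markovian structure to regard the regular conditional probability given $\mathcal{F}^t_s$ as a new weak admissible setting. Then, for $\mathbf{P}$-a.e.\ $\omega$, the restriction of $\bar u$ to $[s,T]$ is admissible in $\mathcal{U}^w[s,T]$. By the DPP (and uniqueness of the quadratic BSDE, Lemma \ref{solvability of recursive FBSDE}) we get
\[
V(s,\bar X(s))=\bar Y(s), \qquad V(s,x')\le Y^{s,x';\bar u}(s)\quad \text{for all } x'\in\mathbb{R}^n .
\]

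\textbf{Step 2: second-order expansion.} For $\xi\in\mathbb{R}^n$, let $X^\xi:=X^{s,\bar X(s)+\xi;\bar u}$, with $(Y^\xi,Z^\xi)$ the corresponding solution of \eqref{scalar-valued BSDE}. Write $\hat X=X^\xi-\bar X$ and $\hat Y=Y^\xi-\bar Y$, $\hat Z=Z^\xi-\bar Z$. The goal is the expansion
\begin{equation}\label{goal-expansion}
\hat Y(s)=\langle p(s),\xi\rangle+\tfrac12\xi^\top P(s)\xi+o(|\xi|^2),
\end{equation}
in $L^1$ (conditionally, a.s.). This uses the ansatz
\[
\hat Y(r)=\langle p(r),\hat X(r)\rangle+\tfrac12\hat X(r)^\top P(r)\hat X(r)+\text{remainder},
\]
with $\hat Z$ given by the corresponding Itô product formula (the relation used in \cite{HJX20}). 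Standard SDE estimates give $\mathbb{E}[\sup|\hat X|^{k}\mid\mathcal{F}_s]\le C|\xi|^{k}$, and the second-order Taylor remainder of $b,\sigma,f,h$ is $o(|\xi|^2)$ by Assumption 4.

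Subtracting the Itô expansion of the ansatz from the $\hat Y$ equation, the remainder $(\tilde Y,\tilde Z)$ satisfies a linear BSDE. Its $z$-coefficient is $\frac{\mu}{2}(Z^\xi+\bar Z+\ldots)$, which is a BMO integrand by Lemma \ref{solvability of recursive FBSDE}, and its driver and terminal value are of order $o(|\xi|^2)$. The quadratic cross terms $\mu|\hat Z|^2/2$ are exactly what produce the $\mu pp^\top$, $\mu pq^\top$ and $\mu qq^\top$ terms in \eqref{the second-order adjoint equation}; this matching is a bookkeeping verification. Lemma \ref{solvability of Stochastic Lipschitz BSDEs} with $p\ge2$ and exponent $\bar q^2$ then yields $\mathbb{E}[|\tilde Y(s)|\mid\mathcal{F}_s]=o(|\xi|^2)$. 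Since $\tilde Y(s)$ is deterministic under the conditional probability, we obtain \eqref{goal-expansion} a.s.

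\textbf{Step 3: deriving the inclusions.} From Steps 1 and 2,
\[
V(s,\bar X(s)+\xi)\le V(s,\bar X(s))+\langle p(s),\xi\rangle+\tfrac12\xi^\top P(s)\xi+o(|\xi|^2).
\]
This gives $(p(s),P(s))\in D_x^{2,+}V$, and adding positive semidefinite matrices preserves membership. For the subjet, take $(p',P')\in D^{2,-}_xV$. Subtracting the subjet inequality from the superjet inequality gives
\[
0\le\langle p(s)-p',\xi\rangle+\tfrac12\xi^\top(P(s)-P')\xi+o(|\xi|^2)
\]
for all small $\xi$. This forces $p'=p(s)$ and $P'\le P(s)$. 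The first-order inclusions follow in the same way, or directly by truncating the expansion.

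\textbf{Null sets and the main obstacle.} The exceptional null set may depend on $\xi$. To handle this, use a countable dense set of $\xi$ together with continuity of $V$ in $x$, and use continuity of $p$ and $P$ in $s$ to obtain the result for all $s$ simultaneously. The main obstacle is Step 2. Because the generator is quadratic, the remainder BSDE has only a stochastic (BMO) Lipschitz constant, so the $o(|\xi|^2)$ bounds on the remainder terms must be proved in $L^{p\bar q^2}$ norms rather than $L^2$. This in turn requires higher moment estimates of $\hat X$, and of $\hat Z$ via Lemma \ref{priori estimate of BSDE}, raised to powers dictated by the reverse Hölder exponent.
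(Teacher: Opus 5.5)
Your architecture matches the paper's. You perturb the state at time $s$ with $\bar u$ fixed, work under the regular conditional probability given $\mathcal{F}^t_s$, compare $\hat Y$ with $\langle p,\hat X\rangle+\frac12\hat X^\top P\hat X$ through a linear BSDE with BMO coefficient, and finish with a countable dense set of $x'$ plus continuity of $V(s,\cdot)$.

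The gap is in Step 2. There you expand to second order in one shot and assert that the remainder BSDE has \emph{both} a BMO $z$-coefficient and an $o(|\xi|^2)$ driver. Let $\Phi_Z$ be the $dW$-integrand of the ansatz, so $\hat Z=\Phi_Z+\tilde Z$ with $\Phi_Z=\langle q+\bar\sigma_x^\top p,\hat X\rangle+O(|\xi|^2)$. Linearize $\frac{\mu}{2}(|Z^\xi|^2-|\bar Z|^2)=\frac{\mu}{2}(Z^\xi+\bar Z)\hat Z$ and cancel against \eqref{the first-order adjoint equation} and \eqref{the second-order adjoint equation}. What remains is $\frac{\mu}{2}\hat Z\Phi_Z-\frac{\mu}{2}|\langle q+\bar\sigma_x^\top p,\hat X\rangle|^2=\frac{\mu}{2}\Phi_Z\tilde Z+o(|\xi|^2)$. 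Both ways of handling this term fail:
\begin{itemize}
\item If $\frac{\mu}{2}\Phi_Z$ goes into the coefficient (your ``$\ldots$''), the coefficient is no longer a BMO integrand. $\Phi_Z$ carries the unbounded factor $\hat X$, and also $\frac12\hat X^\top Q\hat X$, so Lemma \ref{solvability of Stochastic Lipschitz BSDEs} does not apply. Lemma \ref{solvability of recursive FBSDE} only covers $Z^\xi$ and $\bar Z$.
\item If it stays in the driver, the only bound you have on $\tilde Z=\hat Z-\Phi_Z$ is the $O(|\xi|)$ bound from \eqref{estimate of recursive FBSDE1}. That term is then $O(|\xi|^2)$, not $o(|\xi|^2)$.
\end{itemize}
So matching $\frac{\mu}{2}|\hat Z|^2$ with the $\mu pp^\top$, $\mu pq^\top$, $\mu qq^\top$ terms is not bookkeeping. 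It needs $\hat Z-\langle q+\bar\sigma_x^\top p,\hat X\rangle$ to be of order $|\xi|^2$, and your plan never proves this.

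The paper gets this from an intermediate first-order step. It sets $\gamma=\hat Y-\langle p,\hat X\rangle$ and keeps $\frac{\mu}{2}|\hat Z|^2$ inside the source term $\varepsilon_3$. Then \eqref{Stochastic Lipschitz BSDE1} has coefficient only $\mu\bar Z^{t,x;\bar u}$ and an $O(|\xi|^2)$ source. This gives $\mathbb{E}[(\int_s^T|\kappa|^2dr)^{p/2}\mid\mathcal{F}^t_s]\le C|\xi|^{2p}$ for $\kappa=\hat Z-\langle q+\bar\sigma_x^\top p,\hat X\rangle-\langle p,\varepsilon_2\rangle$. Only then does it subtract $\frac12\hat X^\top P\hat X$. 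In \eqref{Stochastic Lipschitz BSDE2} the coefficient is again just $\mu\bar Z^{t,x;\bar u}$. The residual of the quadratic term, $\mu(p^\top\varepsilon_2+\kappa)(\bar\sigma_x^\top p+q)^\top\hat X+\frac{\mu}{2}(p^\top\varepsilon_2+\kappa)^2$, is $O(|\xi|^3)$ by the $\kappa$ bound.

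Your one-shot remainder could also be rescued by a bootstrap. Keep the coefficient $\frac{\mu}{2}(Z^\xi+\bar Z)$ and run a first pass with the crude bound to get $\tilde Z=O(|\xi|^2)$. A second pass then makes $\frac{\mu}{2}\Phi_Z\tilde Z=O(|\xi|^3)$. Some such two-stage argument is the missing idea.

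Separately, your appeal to continuity of $p,P$ in $s$ to get all $s$ simultaneously is not needed for the statement as proved (fix $s$, then a.s.). It would also require uniformity in $s$ of the $o(|\xi|^2)$ term, which you have not established.
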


The following result characterizes the super- and subjets of the value function in the time variable $t$ along an optimal trajectory, with the help of an additional $\mathcal{H}_1$-function.
The proof is also left in the Appendix.

\begin{mythm}\label{relation of MP and DPP in time variable}
Suppose Assumptions 1, 2 and 4 hold and let $(t,x) \in [0,T) \times \mathbb{R}^{n}$ be fixed. 
Suppose that $\bar{u}(\cdot)$ is an optimal control for Problem (RS), and $(\bar{X}^{t,x;\bar{u}}(\cdot),\bar{Y}^{t,x;\bar{u}}(\cdot),\bar{Z}^{t,x;\bar{u}}(\cdot))$ is the optimal trajectory. 
Let  $(p(\cdot),q(\cdot)) \in \bigcap_{p > 1} \left( L^\infty_{\mathcal{F}}([t,T];\mathbb{R}^n) \times \mathcal{M}_\mathcal{F}^{2, p}([t, T]; \mathbb{R}^n) \right)$ and $(P(\cdot),Q(\cdot)) \in \bigcap_{p > 1} \left( \mathcal{S}_\mathcal{F}^p([t, T]; \mathbb{S}^n) \times \mathcal{M}_\mathcal{F}^{2, p}([t, T]; \mathbb{S}^n) \right)$ be the first-order and second-order adjoint process satisfying \eqref{the first-order adjoint equation} and \eqref{the second-order adjoint equation}, respectively. Then
\begin{equation}\label{relationship of adjoint and super- and sub-jets in time variable}
\begin{aligned}
	[-\mathcal{H}_{1}(s,\bar{X}^{t,x;\bar{u}}(s),\bar{Z}^{t,x;\bar{u}}(s),\bar{u}(s)),\infty) \subseteq D_{t^+}^{1,+} V(s,\bar{X}^{t,x;\bar{u}}(s)), &&\text{a.e.}\, s \in [t,T],\ \mathbf{P}\text{-a.s.}, \\
	D_{t^+}^{1,-} V(s,\bar{X}^{t,x;\bar{u}}(s)) \subseteq (-\infty,-\mathcal{H}_{1}(s,\bar{X}^{t,x;\bar{u}}(s),\bar{Z}^{t,x;\bar{u}}(s),\bar{u}(s))], && \text{a.e.}\, s \in [t,T],\ \mathbf{P}\text{-a.s.},
\end{aligned}
\end{equation}
where $\mathcal{H}_{1} : [t,T] \times \mathbb{R}^n \times \mathbb{R} \times U \to \mathbb{R}$ is defined as
\begin{equation}\label{mathcal H1}
\begin{aligned}
	\mathcal{H}_{1}(s,x,z,u) 
	&:=\mathcal{H}(s,x,z,u,p(s),q(s),P(s)) - \frac{1}{2}\bar{\sigma}(s)^\top P(s)\bar{\sigma}(s)
\end{aligned}
\end{equation}
\end{mythm}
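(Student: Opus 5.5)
The proof follows Zhou's scheme (as adapted by Nie et al.\ and Hu et al.\ \cite{HJX20}) in the time variable: we compare the value function at $(r,\bar X^{t,x;\bar u}(s))$, $r\downarrow s$, with the optimal cost along $\bar u$. Fix $s\in[t,T)$ and $\tau\in(s,T]$, and work on a regular conditional probability given $\mathcal F^t_s$, so that $\bar X^{t,x;\bar u}(s)$ may be treated as a deterministic point $\hat x$ and $\bar u$ restricted to $[s,T]$ as an admissible control in $\mathcal U^w[s,T]$. By the DPP and the optimality of $\bar u$ (which propagates along the trajectory), $V(s,\hat x)=\bar Y^{t,x;\bar u}(s)$ a.s. Let $X^{\tau}$ solve the state equation on $[\tau,T]$ with initial value $\hat x$ at time $\tau$ and control $\bar u$, and let $(Y^\tau,Z^\tau)$ be the corresponding quadratic BSDE solution. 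Then $V(\tau,\hat x)\le Y^\tau(\tau)$, and the task becomes to expand
\begin{equation*}
Y^\tau(\tau)-\bar Y^{t,x;\bar u}(s).
\end{equation*}
Write $\bar Y(s)=\bar Y(\tau)+\int_s^\tau[\bar f+\frac{\mu}{2}|\bar Z|^2]dr-\int_s^\tau \bar Z\,dW$. The difference then equals $[Y^\tau(\tau)-\bar Y(\tau)]-\int_s^\tau(\bar f+\frac{\mu}{2}|\bar Z|^2)dr+\int_s^\tau\bar Z\,dW$.

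\textbf{Step 1 (spatial expansion).} The term $Y^\tau(\tau)-\bar Y(\tau)$ compares two solutions on $[\tau,T]$ with the same control and initial points $\hat x$ and $\bar X(\tau)$. We expand it to second order in $\xi_\tau:=\hat x-\bar X(\tau)$ using the variational machinery from the proof of Theorem \ref{relation of MP and DPP in spatial variable}:
\begin{equation*}
Y^\tau(\tau)-\bar Y(\tau)=\langle p(\tau),\xi_\tau\rangle+\tfrac12\xi_\tau^\top P(\tau)\xi_\tau+R(\tau),
\end{equation*}
with $\mathbb E[|R(\tau)|\,|\mathcal F_s]=o(\tau-s)$, because $\mathbb E|\xi_\tau|^{k}=O((\tau-s)^{k/2})$. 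This step relies on the first- and second-order variational FBSDEs and on the linear BSDEs with BMO coefficients (Lemma \ref{solvability of Stochastic Lipschitz BSDEs}). Since $p\in L^\infty$ and $P$ has all moments, these remainder estimates transfer.

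\textbf{Step 2 (short-time expansion).} We expand $\langle p(\tau),\xi_\tau\rangle+\frac12\xi_\tau^\top P(\tau)\xi_\tau$ using It\^o's formula on $[s,\tau]$, with
\begin{equation*}
\xi_r=-\int_s^r\bar b\,dr'-\int_s^r\bar\sigma\,dW.
\end{equation*}
The expansion produces the drift terms $-\langle p,\bar b\rangle-q^\top\bar\sigma+\frac12\bar\sigma^\top P\bar\sigma$, up to contributions of the form $(\cdot)\cdot\xi$, which are $o(\tau-s)$ after conditional expectation. To leading order it also produces a stochastic integral $-\int_s^\tau p^\top\bar\sigma\,dW$. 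The total expression then has the form
\begin{equation*}
\int_s^\tau A_r\,dr+\int_s^\tau(\bar Z-p^\top\bar\sigma)dW+o(\tau-s).
\end{equation*}
Using the relation $\bar Z(r)=p(r)^\top\bar\sigma(r)$ obtained in the proof of Theorem \ref{relation of MP and DPP in spatial variable} (the identity \eqref{relationship between X,Y and Z}), the martingale term vanishes. The $\mu$ terms combine into $\frac{\mu}{2}|\bar Z|^2$ together with the $\mu pp^\top$ correction, which collapses because $\sigma(s,\cdot,\bar u)-\bar\sigma=0$. Collecting terms gives the identity
\begin{equation*}
A_r=-\mathcal H_1(r,\bar X(r),\bar Z(r),\bar u(r)),
\end{equation*}
where $\mathcal H_1$ is evaluated at $u=\bar u$ and its correction terms vanish. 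This yields
\begin{equation*}
V(\tau,\hat x)-V(s,\hat x)\le \mathbb E\Big[-\int_s^\tau\mathcal H_1(r)\,dr\,\Big|\,\mathcal F_s\Big]+o(\tau-s),
\end{equation*}
and the inequality is deterministic given $\hat x$.

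\textbf{Step 3 (Lebesgue points and conclusion).} We pass from the conditional integral to $-(\tau-s)\mathcal H_1(s)$ for a.e.\ $s$. We would first try a Lebesgue-differentiation argument for $\mathbb E[\,|\mathcal H_1(r)-\mathcal H_1(s)|\,]$ along a countable sequence $\tau_k\downarrow s$, combined with a.s.\ selection as in Yong--Zhou, Ch.~5, Lemma 5.4. This gives the first inclusion; enlarging to $[-\mathcal H_1,\infty)$ is trivial. For the second inclusion, take $q\in D^{1,-}_{t+}V(s,\hat x)$. Then $q(\tau-s)+o(\tau-s)\le V(\tau,\hat x)-V(s,\hat x)\le-\mathcal H_1(s)(\tau-s)+o(\tau-s)$; dividing by $\tau-s$ and letting $\tau\downarrow s$ gives $q\le-\mathcal H_1(s)$.

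The main obstacle is Step 1 together with the remainder control in Step 2: the $o(\tau-s)$ bounds must hold uniformly under the quadratic generator. Using Lemma \ref{priori estimate of BSDE}-type estimates, the remainder has to be controlled through reverse H\"older exponents $\bar q^2$, and the resulting moment losses must still give $o(\tau-s)$. The a.e.\ Lebesgue-point selection in $s$ is the other delicate point.
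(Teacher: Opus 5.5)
Your plan is the paper's proof: bound $V(\tau,\bar X(s))$ by $Y^{\tau,\bar X(s);\bar u}(\tau)$ and split at time $\tau$. Expand $Y^{\tau}(\tau)-\bar Y(\tau)$ by the spatial argument of Theorem~\ref{relation of MP and DPP in spatial variable}, with initial perturbation $\bar X(s)-\bar X(\tau)$. Then It\^o-expand the $p$- and $P$-terms on $[s,\tau]$, condition on $\mathcal F^t_s$, and use right Lebesgue points.

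One justification is wrong. The identity \eqref{relationship between X,Y and Z} relates the variations $(\hat Y,\hat Z)$ to $\hat X$. It does not give $\bar Z(r)=p(r)^\top\bar\sigma(r)$. That relation is what It\^o's formula yields when $V$ is smooth, but nothing in this nonsmooth setting establishes it, so you cannot use it to make the martingale term vanish.

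You do not need it. Since $\bar Z\cdot W\in BMO$, $p$ is bounded and $\bar\sigma$ has all moments, both $\int_s^\tau\bar Z\,dW$ and $\int_s^\tau p^\top\bar\sigma\,dW$ are true martingales. They therefore drop out under $\mathbb E[\,\cdot\,|\mathcal F^t_s]$, which you take anyway. This is exactly how the paper obtains $\mathbb E[\bar Y(\tau)-\bar Y(s)|\mathcal F^t_s]=-\mathbb E[\int_s^\tau(\bar f+\frac{\mu}{2}|\bar Z|^2)\,dr\,|\mathcal F^t_s]$.

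For the same reason there is no ``$\mu pp^\top$ correction'' to collapse. The term $\frac{\mu}{2}|\bar Z|^2$ enters directly from the drift of $\bar Y$, and the $(\sigma-\bar\sigma)$-terms of $\mathcal H$ vanish at $u=\bar u(s)$.

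Finally, in Step 3 the a.s.\ selection should be made over all rational $\tau>s$, not a single sequence $\tau_k\downarrow s$. You then pass to all $\tau$ by continuity of $V(\cdot,\bar X(s))$, as the paper does, since a right jet requires the bound along every $\tau\downarrow s$.
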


\begin{Remark}
Since our controlled system is decoupled, our initial idea is to follow the approach proposed by Nie et al. \cite{NSW17}. 
In this case, we will also obtain a class of linear BSDEs corresponding to equation (4.13) in \cite{NSW17}. 
This is a linear BSDE with unbounded random Lipschitz coefficients, but the equation is extremely complex, making the study of their well-posedness very challenging. 
Inspired by the Hu et al. \cite{HJX20}, we utilize the relationship between $(\hat{Y}(\cdot),\hat{Z}(\cdot))$ and $\hat{X}(\cdot)$ (see equation \eqref{relationship between X,Y and Z}) and estimate the remainder terms of the forward and backward equations simultaneously. 
Then we also obtain a class of linear BSDEs with unbounded random Lipschitz coefficients (see equation \eqref{Stochastic Lipschitz BSDE1} and \eqref{Stochastic Lipschitz BSDE2}), for which corresponding well-posedness results are available. 
Then, we obtain the required estimation results, thereby enabling the establishment of relationship \eqref{relation of adjoint and second-order super and sub-jets in spatial variable}. 
The proof idea for relationship \eqref{relationship of adjoint and super- and sub-jets in time variable} is similar to this.
\end{Remark}

Now, let us combine Theorem 4.1 and 4.2 to get the following result.

\begin{mythm}\label{relation of MP and DPP in time variable and spatial variable}
Under the condition of Theorems 4.1 and 4.2, for \text{a.e.} $s \in [t,T], \mathbf{P}$\text{-a.s.}, 
\begin{equation}\label{relationship of adjoint and super- and sub-jets in time variable and spatial variable}
\begin{aligned}
	[-\mathcal{H}_{1}(s,\bar{X}^{t,x;\bar{u}}(s),\bar{Z}^{t,x;\bar{u}}(s),\bar{u}(s)),\infty) \times \{p(s)\} \times [P(s),\infty)\subseteq D_{t+,x}^{1,2,+} V(s,\bar{X}^{t,x;\bar{u}}(s)), \\
	D_{t+,x}^{1,2,-} V(s,\bar{X}^{t,x;\bar{u}}(s)) \subseteq (-\infty,-\mathcal{H}_{1}(s,\bar{X}^{t,x;\bar{u}}(s),\bar{Z}^{t,x;\bar{u}}(s),\bar{u}(s))] \times \{p(s)\} \times (-\infty,P(s)].
\end{aligned}
\end{equation}
\end{mythm}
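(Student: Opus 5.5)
The second inclusion in \eqref{relationship of adjoint and super- and sub-jets in time variable and spatial variable} follows directly from Theorems \ref{relation of MP and DPP in spatial variable} and \ref{relation of MP and DPP in time variable}. If $(q,p',P')\in D_{t+,x}^{1,2,-}V(s,\bar X^{t,x;\bar u}(s))$, then freezing $t=s$ in the defining inequality gives $(p',P')\in D_x^{2,-}V(s,\bar X^{t,x;\bar u}(s))$. Letting $x=\hat x$ and $t\downarrow s$ gives $q\in D_{t+}^{1,-}V(s,\bar X^{t,x;\bar u}(s))$. Hence $p'=p(s)$, $P'\le P(s)$ and $q\le -\mathcal H_1(s,\bar X^{t,x;\bar u}(s),\bar Z^{t,x;\bar u}(s),\bar u(s))$, on the common full-measure set of $s$ and $\omega$ given by the two theorems. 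Moreover, the superjet is invariant under adding $[0,\infty)\times\{0\}\times\mathbb S^n_+$. So for the first inclusion it suffices to show that the single triple $(-\mathcal H_1(s),p(s),P(s))$ lies in $D_{t+,x}^{1,2,+}V(s,\bar X^{t,x;\bar u}(s))$. This cannot be read off from the two partial superjets, because the $o(\cdot)$ terms must be uniform jointly in $(r,y)$. I will therefore redo the expansion jointly.

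Fix $s$ a Lebesgue point (in the sense used in the proof of Theorem \ref{relation of MP and DPP in time variable}). Take $r\in(s,T]$ and $y\in\mathbb R^n$, and set $\xi:=y-\bar X^{t,x;\bar u}(s)$. As in the time-variable proof, working on the weak-formulation space conditioned on $\mathcal F^t_s$, I apply the control $\bar u$ on $[r,T]$ started from $(r,y)$. This gives $V(r,y)\le Y^{r,y;\bar u}(r)$, while $V(s,\bar X(s))=\bar Y(s)$. I decompose
\begin{equation*}
Y^{r,y;\bar u}(r)-\bar Y(s)=\bigl[Y^{r,y;\bar u}(r)-\bar Y(r)\bigr]+\bigl[\bar Y(r)-\bar Y(s)\bigr].
\end{equation*}
For the first bracket I use the spatial expansion established in the proof of Theorem \ref{relation of MP and DPP in spatial variable}, applied at initial time $r$ with perturbation $\eta:=y-\bar X(r)=\xi-(\bar X(r)-\bar X(s))$:
\begin{equation*}
Y^{r,y;\bar u}(r)-\bar Y(r)\le \langle p(r),\eta\rangle+\tfrac12\eta^\top P(r)\eta+R(r,\eta).
\end{equation*}
The remainder satisfies $\mathbb E[|R(r,\eta)|\,|\mathcal F^t_s]=o(\mathbb E[|\eta|^2|\mathcal F^t_s])$, uniformly in $r$. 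That uniformity comes from the estimates of Lemma \ref{solvability of Stochastic Lipschitz BSDEs} applied to \eqref{Stochastic Lipschitz BSDE1}–\eqref{Stochastic Lipschitz BSDE2}, whose constants depend only on the BMO norm of $\bar Z\cdot W$ and on the data; by Lemma \ref{solvability of recursive FBSDE}, these are uniform in $r$.

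Next I expand $\eta$ in the display above. The pure $\xi$ terms give $\langle p(r),\xi\rangle+\frac12\xi^\top P(r)\xi$. The pure $\bar X(r)-\bar X(s)$ terms, together with $\bar Y(r)-\bar Y(s)$, are exactly the combination treated in the proof of Theorem \ref{relation of MP and DPP in time variable}. Because the $Y$-equation is quadratic, these computations are carried out under the equivalent measure with density $\mathcal E(\mu\bar Z\cdot W)$, which is a true martingale by Lemma \ref{properties of BMO martingales}(2) with moments controlled by reverse Hölder. They contribute $-\mathcal H_1(s,\bar X(s),\bar Z(s),\bar u(s))(r-s)+o(r-s)$. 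The remaining terms are handled by Itô's formula on $[s,r]$ using the adjoint equations \eqref{the first-order adjoint equation}–\eqref{the second-order adjoint equation}. These are $\langle p(r)-p(s),\xi\rangle$, $\xi^\top(P(r)-P(s))\xi$, and the cross term $\xi^\top P(r)(\bar X(r)-\bar X(s))$. Their conditional expectations are $O(r-s)|\xi|$ and $o(1)|\xi|^2$, because the martingale parts vanish and the quadratic covariations $\int_s^r(\ldots)\,dr'$ are $O(r-s)$. Using $(r-s)|\xi|\le (r-s)+ (r-s)|\xi|^2$ with $r-s\to0$, all of these are $o(|r-s|+|\xi|^2)$. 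Since $\xi$ is $\mathcal F^t_s$-measurable, this yields the required superjet inequality a.s.

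The main obstacle is the uniform remainder estimate in the first bracket: the quadratic BSDE must be linearised around a trajectory that starts at a random, varying time $r$ and is perturbed by the random $\eta$. I would first try to reuse verbatim the BMO-coefficient linear BSDEs \eqref{Stochastic Lipschitz BSDE1}–\eqref{Stochastic Lipschitz BSDE2} from the proof of Theorem \ref{relation of MP and DPP in spatial variable} on the interval $[r,T]$. Since $\mathbb E[|\eta|^{k}|\mathcal F_s]\lesssim |\xi|^k+(r-s)^{k/2}$, those estimates give a remainder of the form $o(|\xi|^2+(r-s))$. A further point to check is a measurable selection of a null set that is independent of $(r,y)$; I would handle this via continuity in $y$ and a countable dense set of $r$.
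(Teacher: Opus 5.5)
Your argument is the paper's. The paper proves this theorem only by saying it follows from combining the proofs of Theorems~\ref{relation of MP and DPP in spatial variable} and~\ref{relation of MP and DPP in time variable}, and your proposal is that combination written out. You restart from $(r,y)$ instead of $(\tau,\bar X(s))$, and apply the spatial expansion at time $r$ to $\eta=\xi-(\bar X(r)-\bar X(s))$. You also correctly observe that the subjet inclusion follows from the two partial results alone, whereas the superjet inclusion needs this joint expansion.

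Two points need correcting.

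\emph{Change of measure.} Compute $\bar Y(r)-\bar Y(s)-\langle p(r),\Delta\rangle+\frac12\Delta^\top P(r)\Delta$, with $\Delta:=\bar X(r)-\bar X(s)$, under $\mathbf P$, as the proof of Theorem~\ref{relation of MP and DPP in time variable} does. No change of measure is needed here, because $\bar Z\cdot W\in BMO$ makes $\int\bar Z\,dW$ a true $\mathbf P$-martingale. If you condition under $\mathcal E(\mu\bar Z\cdot W)\cdot\mathbf P$ instead, two things change:
\begin{itemize}
\item the drift of $\bar Y$ becomes $-(\bar f-\frac{\mu}{2}|\bar Z|^2)$;
\item $\bar X$ picks up the extra drift $\mu\bar Z\bar\sigma$.
\end{itemize}
The coefficient of $r-s$ is then $-\mathcal H_1+\mu\bar Z(\bar Z-\bar\sigma^\top p)$ rather than $-\mathcal H_1$. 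These agree only where $\bar Z(\bar Z-\bar\sigma^\top p)=0$, and that identity is not established in the paper or in your proposal. The Girsanov and reverse H\"older arguments belong only inside the remainder estimate, through Lemma~\ref{solvability of Stochastic Lipschitz BSDEs}.

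\emph{Cross-term bound.} Your bound $(r-s)|\xi|\le (r-s)+(r-s)|\xi|^2$ is only $O(r-s)$, not $o(|r-s|+|\xi|^2)$. Use $(r-s)|\xi|\le|\xi|\,(|r-s|+|\xi|^2)$ instead, which is $o(|r-s|+|\xi|^2)$ because $|\xi|\to0$.
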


\begin{proof}
It is clear that \eqref{relationship of adjoint and super- and sub-jets in time variable and spatial variable} can be proved by combining the proofs of Theorems 4.1 and 4.2.
\end{proof}

\begin{Remark}
(1) When the risk-sensitive parameter $\mu \to 0$, the results of current paper reduce to the risk-neutral case in \cite{Z90-2,Z91} (see also \cite{YZ99}).

(2) When the value function $V$ is smooth, its second-order super- and sub-jets in the state variable $x$ reduce to:
\begin{equation}
\begin{aligned}
	D_x^{2,+}V(s,\bar{X}^{t,x;\bar{u}}(s))=\{V_x(s,\bar{X}^{t,x;\bar{u}}(s))\} \times [V_{xx}(s,\bar{X}^{t,x;\bar{u}}(s)),\infty) \\
	D_x^{2,-} V(s,\bar{X}^{t,x;\bar{u}}(s))= \{V_x(s,\bar{X}^{t,x;\bar{u}}(s))\} \times (-\infty, V_{xx}(s,\bar{X}^{t,x;\bar{u}}(s))].
\end{aligned}
\end{equation}
Consequently, the inclusions \eqref{relation of adjoint and second-order super and sub-jets in spatial variable} reduce to:
\begin{equation}
	p(s) = V_x(s,\bar{X}^{t,x;\bar{u}}(s)), \qquad V_{xx}(s,\bar{X}^{t,x;\bar{u}}(s))\leq P(s) ,
\end{equation}
which coincide exactly with Item 1 of (17) and (18) in Theorem 1 in \cite{DS26}.

Moreover, the first-order super- and sub-jet in the time variable $t$ reduce to:
\begin{equation}
\begin{aligned}
	D_{t+}^{1,+}V(s,\bar{X}^{t,x;\bar{u}}(s)) =[V_{t}(s,\bar{X}^{t,x;\bar{u}}(s)),\infty), \\
	D_{t+}^{1,-}V(s,\bar{X}^{t,x;\bar{u}}(s))=(-\infty,V_{t}(s,\bar{X}^{t,x;\bar{u}}(s))].
\end{aligned}
\end{equation}
Consequently, the inclusions reduce to a single inequality:
\begin{equation}
\begin{aligned}
	V_{t}(s,\bar{X}^{t,x;\bar{u}}(s))&\leq -\mathcal{H}_{1}(s,\bar{X}^{t,x;\bar{u}}(s),\bar{Z}^{t,x;\bar{u}}(s),\bar{u}(s)).
\end{aligned}
\end{equation}
\end{Remark}

\section{Some examples}

In this section, we give two examples to illustrate the main results (Theorems \ref{relation of MP and DPP in spatial variable} and \ref{relation of MP and DPP in time variable}) of this paper.

$\mathbf{Example \ 5.1.}$ Consider the following controlled FBSDE $(n=1)$ for $s \in [t,T]$:
\begin{equation}\label{state FBSDE example 1}
\begin{cases}
\begin{aligned}
	&dX^{t, x; u}(s)=u(s)dW(s),\\
	&dY^{t, x; u}(s) = -\left[u(s)^2 + \frac{\mu}{2}|Z^{t,x;u}(s)|^2\right]ds + Z^{t,x;u}(s)dW(s),\\
	&X^{t, x; u}(t)=x,\ Y^{t, x; u}(T) = \arctan(X^{t, x; u}(T)),
\end{aligned}
\end{cases}
\end{equation}
where the risk-sensitive parameter satisfies $\mu \geq 1$. The control domain is $U=\{0, 1\}$ and the cost functional is defined by $Y^{t, x; u}(t)$. The corresponding generalized HJB equation becomes
\begin{equation}\label{HJB equation of example}
\begin{cases}
	v_t(t,x) + \inf\limits_{u \in U} \left\{u^2+\frac{\mu}{2}u^2v_x(t,x)^2+\frac{1}{2}u^2v_{xx}(t,x)\right\}=0,\\
	v(T,x)=\arctan{x}.
\end{cases}
\end{equation}
Next we verify that the solution to \eqref{HJB equation of example} is given by $V(t,x)=\arctan{x}$. Set $V(t,x)=a(t)\arctan{x}$, where $a(t)$ satisfies
\begin{equation*}
\begin{cases}
	\dot{a}(t)\arctan{x} + \inf\limits_{u \in U} u^2\left\{1+\frac{\mu}{2}\frac{a(t)^2}{(1+x^2)^2}-\frac{1}{2}\frac{2xa(t)}{(1+x^2)^2}\right\}=0,\\
	a(T)=1,
\end{cases}
\end{equation*}
that is 
\begin{equation*}
\begin{cases}
	\dot{a}(t)\arctan{x} + \inf\limits_{u \in U} u^2\left\{\frac{2(1+x^4)+3x^2+(\mu-1)a(t)^2+(x-a(t))^2}{2(1+x^2)^2}\right\}=0,\\
	a(T)=1.
\end{cases}
\end{equation*}
Noting that $\frac{2(1+x^4)+3x^2+(\mu-1)a(t)^2+(x-a(t))^2}{2(1+x^2)^2}>0$, then $\bar{u}(\cdot)=0$ which leads to $a(t)=1$.  Thus  $V(t,x)=\arctan{x}$.  

Moreover, the first-order and second-order adjoint equations become
\begin{equation}\label{the first-order adjoint equation of example}
\left\{
\begin{aligned}
	dp(s) &= -\mu \bar{Z}^{t,x;\bar{u}}(s)q(s)ds + q(s)dW(s),\\
	p(T) &=\frac{1}{1+\bar{X}^{t,x;\bar{u}}(T)^2},
\end{aligned}
\right.
\end{equation}
and 
\begin{equation}\label{the second-order adjoint equation of example}
\left\{
\begin{aligned}
	dP(s) &= -\big[\mu q(s)^{2} + \mu Q(s)\bar{Z}^{t,x;\bar{u}}(s)\big]ds + Q(s)dW(s),\\
	P(T) &= -\frac{2\bar{X}^{t,x;\bar{u}}(T)}{(1+\bar{X}^{t,x;\bar{u}}(T)^2)^2}.
\end{aligned}
\right.
\end{equation}
For $\bar{u}(\cdot)=0$, it is easy to verify that $\bar{X}^{t, x; \bar{u}}(s)=x,\bar{Y}^{t, x; \bar{u}}(s)=\arctan x,\bar{Z}^{t, x; \bar{u}}(s)=0$ is the optimal trajectory. 
Then $V(t,x)=\bar{Y}^{t, x; \bar{u}}(t)=\arctan{x}$ is the value function. 
The solutions of the first-order and second-order adjoint equations for the optimal control $\bar{u}(\cdot)=0$ are $(p(s),q(s))=\left(\frac{1}{1+x^2},0\right)$, $(P(s),Q(s))=\left(-\frac{2x}{(1+x^2)^2},0\right)$. 
In this case, $\mathcal{H}_{1}(s,\bar{X}^{t,x;\bar{u}}(s),\bar{Z}^{t,x;\bar{u}}(s),\bar{u}(s))=0$. 
By \eqref{second-order super- and sub-jets in the spatial variable} and \eqref{right super- and sub-jets in the time variable}, we have for $s \in [t,T]$,
\begin{equation}\label{relation of MP and DPP example 1}
\begin{aligned}
	&D_x^{2,+} V(s,\bar{X}^{t, x; \bar{u}}(s))=\left\{\frac{1}{1+x^2}\right\} \times \left[-\frac{2x}{(1+x^2)^2}, \infty\right),\\
	&D_x^{2,-} V(s,\bar{X}^{t, x; \bar{u}}(s))=\left\{\frac{1}{1+x^2}\right\} \times \left(-\infty, -\frac{2x}{(1+x^2)^2}\right],\\
	&D_{t^+}^{1,+} V(s,\bar{X}^{t,x;\bar{u}}(s))= [0,\infty),\quad D_{t^+}^{1,-} V(s,\bar{X}^{t,x;\bar{u}}(s))=(-\infty, 0].
\end{aligned}
\end{equation}
Thus, the relations \eqref{relation of adjoint and second-order super and sub-jets in spatial variable} and \eqref{relationship of adjoint and super- and sub-jets in time variable} hold.

$\mathbf{Example \ 5.2.}$ Consider the following controlled FBSDE $(n=1)$ for $s \in [t,T]$:
\begin{equation}\label{state FBSDE example 2}
\begin{cases}
\begin{aligned}
	&dX^{t, x; u}(s)=-cu(s)X^{t, x; u}(s)ds+u(s)X^{t, x; u}(s)dW(s),\\
	&dY^{t, x; u}(s) = -\bigg[u(s)\left(\frac{cX^{t, x; u}(s)}{1+(X^{t, x; u}(s))^2}+\frac{(X^{t, x; u}(s))^3}{(1+(X^{t, x; u}(s))^2)^2}-\frac{\mu (X^{t, x; u}(s))^2}{2(1+(X^{t, x; u}(s))^2)^2}\right)\\
	&\qquad\qquad\qquad-\lambda(1-u(s))\arctan(X^{t, x; u}(s))+ \frac{\mu}{2}(Z^{t,x;u}(s))^2\bigg]ds +  Z^{t,x;u}(s)dW(s),\\
	&X^{t, x; u}(t)=x,\ Y^{t, x; u}(T) = \arctan(X^{t, x; u}(T)),
\end{aligned}
\end{cases}
\end{equation}
where $\mu>0$ is the risk-sensitive parameter, and the constants $c$ and $\lambda$ satisfy $0 < \mu < \min\{\mu^*, c\}$ and $0<\lambda T<1$. The control domain is $U=\{0,1\}$, and the cost functional is given by $Y^{t,x;u}(t)$.  The corresponding generalized HJB equation becomes
\begin{equation}\label{HJB equation of example2}
\begin{cases}
	\begin{aligned}
	&v_t(t,x) + \inf\limits_{u \in U} \bigg\{-\lambda(1-u)\arctan{x}+u\left(\frac{cx}{1+x^2}+\frac{x^3}{(1+x^2)^2}-\frac{\mu x^2}{2(1+x^2)^2}\right)\\
	&\quad
	-cuxv_{x}(t,x)+\frac{\mu}{2}u^2x^2v_x(t,x)^2+\frac{1}{2}u^2x^2v_{xx}(t,x)\bigg\}=0,\\
	&v(T,x)=\arctan{x}.
	\end{aligned}
\end{cases}
\end{equation} 
Set $V(t,x)=a(t)\arctan x$, where $0<a(t)\le 1$. Substituting into \eqref{HJB equation of example2} yields
\begin{equation*}
\begin{cases}
\begin{aligned}
	&\dot{a}(t)\arctan{x} + \inf\limits_{u \in U} \bigg\{-\lambda(1-u)\arctan{x}+u\left(\frac{cx}{1+x^2}+\frac{x^3}{(1+x^2)^2}-\frac{\mu x^2}{2(1+x^2)^2}\right)\\
	&\quad
	-cux\frac{a(t)}{1+x^2}+\frac{\mu}{2}u^2x^2\frac{a(t)^2}{(1+x^2)^2}-\frac{x^3a(t)u^{2}}{(1+x^2)^2}\bigg\}=0,\\
	&a(T)=1,
\end{aligned}
\end{cases}
\end{equation*} 
which simplifies to 
\begin{equation}\label{ODE of a(t)}
\begin{cases}
\begin{aligned}
	&\dot{a}(t)\arctan{x} + \inf \bigg\{-\lambda\arctan{x},\frac{(1-a(t))x}{(1+x^2)^2} \left( c(1+x^2) + x^2 - \frac{\mu(1+a(t))x}{2} \right)\bigg\}=0,\\
	&a(T)=1.
\end{aligned}
\end{cases}
\end{equation}

If $x<0$, we have $-\lambda\arctan{x}>0$, $\frac{(1-a(t))x}{(1+x^2)^2}\leq0$ and
\begin{equation*}
	\begin{aligned}
		& c(1+x^2) + x^2 - \frac{\mu(1+a(t))x}{2}
		\geq c(1 + x^2) + x^2  > 0,
	\end{aligned}
\end{equation*}
because $0<a(t)\leq1$. So the infimum is $\frac{(1-a(t))x}{(1+x^2)^2} \left( c(1+x^2) + x^2 - \frac{\mu(1+a(t))x}{2} \right)$ and $\bar u(s)=1$ is optimal. Substituting this into \eqref{ODE of a(t)} gives
\begin{equation*}
\begin{cases}
\begin{aligned}
	&\dot{a}(t)\arctan{x} + \frac{(1-a(t))x}{(1+x^2)^2} \left( c(1+x^2) + x^2 - \frac{\mu(1+a(t))x}{2} \right)=0,\\
	&a(T)=1,
\end{aligned}
\end{cases}
\end{equation*}
so $a(t)=1.$

If $x\geq0$, we have $-\lambda\arctan{x}\leq0$, $\frac{(1-a(t))x}{(1+x^2)^2}\geq0$ and
\begin{equation*}
\begin{aligned}
    & c(1+x^2) + x^2 - \frac{\mu(1+a(t))x}{2}
	\geq c(1 + x^2) + x^2 - \mu x \\
	&\quad\geq c(1 + x^2) + x^2 - cx 
	= c(x^2 - x + 1) + x^2 > 0,
\end{aligned}
\end{equation*}
because $c>\mu$ and $0<a(t)\leq1$. Hence the infimum is $-\lambda\arctan x$, and $\bar u(s)=0$ is an optimal control. Substituting this into \eqref{ODE of a(t)} gives
\begin{equation*}
\begin{cases}
\begin{aligned}
	&\dot{a}(t)\arctan{x} -\lambda\arctan{x}=0,\\
	&a(T)=1,
\end{aligned}
\end{cases}
\end{equation*}
so $a(t)=1-\lambda(T-t)$, which satisfies $0<a(t)<1$ due to $0<\lambda T<1$.

Thus, a viscosity solution of \eqref{HJB equation of example2} is given by
\begin{equation}\label{value function example2}
V(t, x) =
\begin{cases}
	\arctan{x},& \text{if } x < 0,\\
	(1-\lambda(T-t))\arctan{x},& \text{if } x \geq 0.
\end{cases}
\end{equation}
By the uniqueness result for the viscosity solution of the HJB equation, for $0<\mu<\mu^*$, $V(t,x)$ coincides with the value function of our problem. Moreover, the first-order and the second-order adjoint equations become
\begin{equation}\label{the first-order adjoint equation of example2}
\left\{
\begin{aligned}
	dp(s) &= -\bigg[ c\bar{u}(s) \frac{1 - (\bar{X}^{t,x;\bar{u}}(s))^2}{(1 + (\bar{X}^{t,x;\bar{u}}(s))^2)^2} +\bar{u}(s) \frac{(\bar{X}^{t,x;\bar{u}}(s))^2(3 - (\bar{X}^{t,x;\bar{u}}(s))^2)}{(1 + (\bar{X}^{t,x;\bar{u}}(s))^2)^3} \\ 
	&\qquad+\mu\bar{u}(s)\frac{\bar{X}^{t,x;\bar{u}}(s)((\bar{X}^{t,x;\bar{u}}(s))^2 - 1)}{(1 + (\bar{X}^{t,x;\bar{u}}(s))^2)^3} 
	- \frac{\lambda(1 - \bar{u}(s))}{1 + (\bar{X}^{t,x;\bar{u}}(s))^2}-c\bar{u}(s)p(s)\\
	&\qquad+\bar{u}(s)q(s) +\mu\bar{Z}^{t,x;\bar{u}}(s)(\bar{u}(s)p(s)+q(s))\bigg]ds + q(s)dW(s),\\
	p(T) &=\frac{1}{1+\bar{X}^{t,x;\bar{u}}(T)^2},
\end{aligned}
\right.
\end{equation}
and
\begin{equation}\label{the second-order adjoint equation of example2}
\left\{
\begin{aligned}
	dP(s)&= -\bigg[-2c\bar{u}(s)P(s)+(\bar{u}(s))^2(P(s)+\mu p(s)^2)+2\bar{u}(s)(Q(s)+\mu P(s)\bar{Z}^{t,x;\bar{u}}(s)+\mu p(s)q(s))\\
	&\quad+ \frac{2\lambda(1 - \bar{u}(s))\bar{X}^{t,x;\bar{u}}(s)}{(1 + (\bar{X}^{t,x;\bar{u}}(s))^2)^2} +\bar{u}(s)\frac{2\bar{X}^{t,x;\bar{u}}(s)(3 - 8(\bar{X}^{t,x;\bar{u}}(s))^2 + (\bar{X}^{t,x;\bar{u}}(s))^4)}{(1 + (\bar{X}^{t,x;\bar{u}}(s))^2)^4}\\  
	&\quad- \mu \bar{u}(s)\frac{1 - 8(\bar{X}^{t,x;\bar{u}}(s))^2 + 3(\bar{X}^{t,x;\bar{u}}(s))^4}{(1 + (\bar{X}^{t,x;\bar{u}}(s))^2)^4} 
	+\bar{u}(s) \frac{2c \, \bar{X}^{t,x;\bar{u}}(s)((\bar{X}^{t,x;\bar{u}}(s))^2 - 3)}{(1 + (\bar{X}^{t,x;\bar{u}}(s))^2)^3}\\
	&\quad+\mu q(s)^{2} +\mu Q(s)\bar{Z}^{t,x;\bar{u}}(s)\bigg]ds + Q(s)dW(s),\\
	P(T)&= -\frac{2\bar{X}^{t,x;\bar{u}}(T)}{(1+(\bar{X}^{t,x;\bar{u}}(T))^2)^2}.
\end{aligned}
\right.
\end{equation}
Now let us consider $t=0,x=0$. It is clear that $\bar{u}(\cdot)=0$ is an optimal control.
It is easy to verify that $\bar{X}^{0, 0; \bar{u}}(s)=0,\bar{Y}^{0, 0; \bar{u}}(s)=0,\bar{Z}^{0, 0; \bar{u}}(s)=0$ is the corresponding optimal trajectory, which implies $\mathcal{H}_{1}(s,\bar{X}^{0,0;\bar{u}}(s),\bar{Z}^{0,0;\bar{u}}(s),\bar{u}(s))=0$. 
The solutions of the first-order and second-order adjoint equations for the optimal control $\bar{u}(\cdot)=0$ are $(p(s),q(s))=(1-\lambda(T-s),0), (P(s),Q(s))=(0,0)$. 
In this case, $V_{x}(t,x)$ does not exist along the whole state $\bar{X}^{0, 0;\bar{u}}(s), s \in [t,T)$. 
However, by \eqref{second-order super- and sub-jets in the spatial variable} and \eqref{right super- and sub-jets in the time variable}, we have for $s \in [t,T)$,
\begin{equation}\label{relation of MP and DPP example 2}
\begin{aligned}
	&D_x^{2,+} V(s,\bar{X}^{0, 0; \bar{u}}(s))=(1-\lambda(T-s), 1) \times \mathbb{R} \cup \{1-\lambda(T-s), 1\} \times [0, \infty),\\
	&D_x^{2,-} V(s,\bar{X}^{0, 0; \bar{u}}(s))=\emptyset,\\
	&D_{t^+}^{1,+} V(s,\bar{X}^{0, 0; \bar{u}}(s))= [0,\infty),\
	D_{t^+}^{1,-} V(s,\bar{X}^{0, 0; \bar{u}}(s))=(-\infty, 0].
\end{aligned}
\end{equation}
Thus, the relations \eqref{relation of adjoint and second-order super and sub-jets in spatial variable} and \eqref{relationship of adjoint and super- and sub-jets in time variable} hold.

\section{Concluding remarks}

In this paper, we have established a nonsmooth version of the relationship between the general maximum principle and dynamic programming principle for the risk-sensitive stochastic optimal control problems, where the control domain is not necessarily convex. 
By introducing a BSDE with quadratic generator, the original problem is equivalent to a stochastic recursive optimal control problem of a forward-backward system with quadratic generators. 
We extend the work of Zhou \cite{Z90-2,Z91} to the risk-sensitive optimal control problem and partially generalize to the work by Nie et al. \cite{NSW17} to a forward-backward system, where the backward equation has quadratic growth. 
With the help of viscosity solutions, we give the relationship among the adjoint process, generalized Hamiltonian function and value function when the value function being not necessarily smooth. 
At last, we have given two examples to illustrate the theoretical results.

It would be interesting to investigate the stochastic verification theorem of risk-sensitive stochastic optimal control problems, and the relationship between MP and DPP for the risk-sensitive stochastic optimal control problem of jump diffusions. 
We will research these topics in the near future.

\section*{Acknowledgements}

The authors would like to thank two anonymous reviewers for their valuable comments on the paper which led to significant improvement on the presentation and quality of the paper.

\section*{Appendix}
\setcounter{section}{0}

\subsection{Proof of Theorem \ref{relation of MP and DPP in spatial variable}}

\begin{proof}
We split the proof into several steps. In the following, the constant $C>0$ will change from line to line for simplicity. 

Step 1. Variational equations.
  
Fix an $s \in [t,T]$. For any $x' \in \mathbb{R}^n$, denote by $(X^{s,x';\bar{u}}(\cdot),Y^{s,x';\bar{u}}(\cdot),Z^{s,x';\bar{u}}(\cdot))$ the solution to the following FBSDE:
\begin{equation}\label{recursive FBSDE1}
\begin{cases}
\begin{aligned}
	&dX^{s, x'; \bar{u}}(r)=b(r,X^{s, x'; \bar{u}}(r),\bar{u}(r))dr +\sigma(r,X^{s, x'; \bar{u}}(r),\bar{u}(r))dW(r),\\
	&dY^{s, x'; \bar{u}}(r) = -\left[f(r, X^{s, x'; \bar{u}}(r), \bar{u}(r)) + \frac{\mu}{2}|Z^{s, x'; \bar{u}}(r)|^2\right]dr + Z^{s, x'; \bar{u}}(r)dW(r),\\
	&X^{s, x'; \bar{u}}(s)=x',\ Y^{s, x'; \bar{u}}(T) = h(X^{s, x'; \bar{u}}(T)),\ r \in [s,T].
\end{aligned}
\end{cases}
\end{equation}
It is clear that \eqref{recursive FBSDE1} can be regarded as a FBSDE on $(\Omega,\mathcal{F},\{\mathcal{F}_r^t\}_{r \geq t},\mathbf{P}(\cdot|\mathcal{F}_s^t)(\omega))$ for $\mathbf{P}\text{-a.s.}\, \omega$, 
where $\mathbf{P}(\cdot|\mathcal{F}_s^t)(\omega)$ is the regular conditional probability given $\mathcal{F}_s^t$ defined on $(\Omega,\mathcal{F})$. 
For any $s \leq r \leq T$, set $\hat{X}(r) := X^{s,x';\bar{u}}(r) - \bar{X}^{t,x;\bar{u}}(r)$, $\hat{Y}(r) := Y^{s,x';\bar{u}}(r) - \bar{Y}^{t,x;\bar{u}}(r)$, $\hat{Z}(r) := Z^{s,x';\bar{u}}(r) - \bar{Z}^{t,x;\bar{u}}(r)$. 
Thus by Lemmas \ref{solvability of recursive FBSDE} and \ref{priori estimate of BSDE}, we have for any $p \geq 2$,\vspace{-3pt}
\begin{equation}\label{estimate of recursive FBSDE1}
	\mathbb{E} \left[\sup\limits_{r \in [s,T]}\left(|\hat{X}(r)|^{p}+|\hat{Y}(r)|^{p}\right) + \Bigl( \int_s^T \bigl| \hat{Z}(r) \bigr|^2 dr \Bigr)^{\frac{p}{2}}\Big|\mathcal{F}_s^t  \right] \leq C|x'-\bar{X}^{t,x;\bar{u}}(s)|^{p},\quad \mathbf{P}\text{-a.s.}.
\end{equation}
Now we write the equation for  $(\hat{X}(\cdot),\hat{Y}(\cdot),\hat{Z}(\cdot))$ as the following FBSDE:
\begin{equation}\label{Variational equations 1}
\begin{cases}
\begin{aligned}
	&d\hat{X}(r)=\left[\bar{b}_x(r)\hat{X}(r)+\varepsilon_{1}(r)\right]dr +\left[\bar{\sigma}_x(r)\hat{X}(r)+\varepsilon_{2}(r)\right]dW(r),\\
	&d\hat{Y}(r) = -\left[\bar{f}_x(r)\hat{X}(r) + \mu \bar{Z}^{t, x; \bar{u}}(r)\hat{Z}(r) + \varepsilon_{3}(r)\right]dr + \hat{Z}(r)dW(r),\\
	&\hat{X}(s)=x'-\bar{X}^{t, x; \bar{u}}(s),\ \hat{Y}(T)=h_x(\bar{X}^{t,x;\bar{u}}(T))\hat{X}(T)+\varepsilon_{4}(T),\ r \in [s,T],
\end{aligned}
\end{cases}
\end{equation}
where
\begin{equation*}
\begin{aligned}
	\varepsilon_{1}(r) &:= \int_{0}^{1}[b_x(r,\bar{X}^{t,x;\bar{u}}(r) + \theta\hat{X}(r),\bar{u}(r))-\bar{b}_x(r)]\hat{X}(r)d\theta, \\
	\varepsilon_{2}(r) &:= \int_{0}^{1}[\sigma_x(r,\bar{X}^{t,x;\bar{u}}(r) + \theta\hat{X}(r),\bar{u}(r))-\bar{\sigma}_x(r)]\hat{X}(r)d\theta,\\
	\varepsilon_{3}(r) &:= \int_{0}^{1}[f_x(r,\bar{X}^{t,x;\bar{u}}(r) + \theta\hat{X}(r),\bar{u}(r))-\bar{f}_x(r)]\hat{X}(r)d\theta + \frac{\mu}{2}|\hat{Z}(r)|^2,\\
	\varepsilon_{4}(T) &:= \int_{0}^{1}[h_x(\bar{X}^{t,x;\bar{u}}(T) + \theta\hat{X}(T))-h_x(\bar{X}^{t,x;\bar{u}}(T))]\hat{X}(T)d\theta.
\end{aligned}
\end{equation*}

Step 2. Estimates of the remainder terms of FBSDE.

For any $p \geq 2$, we have 
\begin{equation}\label{eatimates of remainder terms of FBSDE}
\begin{aligned}
	\mathbb{E} \left[\left(\int_{s}^{T}|\varepsilon_{i}(r)|dr\right)^{p} \Big|\mathcal{F}_s^t  \right] &\leq C|x'-\bar{X}^{t,x;\bar{u}}(s)|^{2p},\ i=1,2,3, \ \mathbf{P}\text{-a.s.},\\
	\mathbb{E} \left[\int_{s}^{T}|\varepsilon_{i}(r)|^{p}dr \Big|\mathcal{F}_s^t  \right] &\leq C|x'-\bar{X}^{t,x;\bar{u}}(s)|^{2p},\ i=1,2, \ \mathbf{P}\text{-a.s.},\\
	\mathbb{E} \left[|\varepsilon_{4}(T)|^{p} \big|\mathcal{F}_s^t  \right] &\leq C|x'-\bar{X}^{t,x;\bar{u}}(s)|^{2p},\ \mathbf{P}\text{-a.s.}.
\end{aligned}
\end{equation}
Here the inequality $\mathbb{E} \left[\int_{s}^{T}|\varepsilon_{i}(r)|^{p}dr |\mathcal{F}_s^t \right] \leq C|x'-\bar{X}^{t,x;\bar{u}}(s)|^{2p},\ \mathbf{P}\text{-a.s.}$, 
means that for $\mathbf{P}\text{-a.s.}\ \omega$ fixed, $\mathbb{E} \left[\int_{s}^{T}|\varepsilon_{i}(r)|^{p}dr |\mathcal{F}_s^t \right](\omega) \leq C(|x'-\bar{X}^{t,x;\bar{u}}(s,w)|^{2p}$. 
Such notation has a similar meaning for other estimates in \eqref{eatimates of remainder terms of FBSDE} as well as in what follows in the paper.

Now we prove \eqref{eatimates of remainder terms of FBSDE}. By Assumption 4, we have
\begin{equation*}
\begin{aligned}
	|\varepsilon_{1}(r)| &\leq \int_{0}^{1}|b_x(r,\bar{X}^{t,x;\bar{u}}(r) + \theta\hat{X}(r),\bar{u}(r))-\bar{b}_x(r)|d\theta |\hat{X}(r)|
     \leq C|\hat{X}(r)|^{2},\\
    |\varepsilon_{3}(r)| &\leq \int_{0}^{1}|f_x(r,\bar{X}^{t,x;\bar{u}}(r) + \theta\hat{X}(r),\bar{u}(r))-\bar{f}_x(r)|d\theta |\hat{X}(r)|+\frac{\mu}{2}|\hat{Z}(r)|^2
     \leq C(|\hat{X}(r)|^{2}+|\hat{Z}(r)|^2).
\end{aligned}
\end{equation*}
Similarly, we have $|\varepsilon_{2}(r)|\leq C|\hat{X}(r)|^{2}$ and $|\varepsilon_{4}(T)|\leq C|\hat{X}(T)|^{2}$.
Then, by \eqref{estimate of recursive FBSDE1}, we obtain that
\begin{equation*}
\begin{aligned}
	 \mathbb{E} \left[ \left( \int_{s}^{T} |\varepsilon_{i}(r)| \, dr \right)^{p} \, \big| \, \mathcal{F}_s^t \right] 
	&\leq C \mathbb{E} \left[ \sup_{r \in [s,T]} |\hat{X}(r)|^{2p} + \left( \int_s^T |\hat{Z}(r)|^2 dr \right)^{p} \Big| \mathcal{F}_s^t \right] \\
	&\leq C |x' - \bar{X}^{t,x;\bar{u}}(s)|^{2p}, \quad i = 1,2,3, \quad \mathbf{P}\text{-a.s.}, \\
	 \mathbb{E} \left[ \int_{s}^{T} |\varepsilon_{i}(r)|^{p} dr \Big| \mathcal{F}_s^t \right] 
	&\leq C \, \mathbb{E} \left[ \sup_{r \in [s,T]} |\hat{X}(r)|^{2p}  \Big| \mathcal{F}_s^t \right] \\
	&\leq C |x' - \bar{X}^{t,x;\bar{u}}(s)|^{2p}, \quad i = 1,2, \quad \mathbf{P}\text{-a.s.}, \\
	 \mathbb{E} \left[ |\varepsilon_{4}(T)|^{p} \, \big| \, \mathcal{F}_s^t \right] 
	&\leq C \mathbb{E} \left[ |\hat{X}(T)|^{2p} \, \big| \, \mathcal{F}_s^t \right] 
	 \leq C |x' - \bar{X}^{t,x;\bar{u}}(s)|^{2p}, \quad \mathbf{P}\text{-a.s.}.
\end{aligned}
\end{equation*}

Step 3. Relationship between $\hat{X}(\cdot)$ and $(\hat{Y}(\cdot),\hat{Z}(\cdot))$. 

Applying It\^o's formula to $\langle p(\cdot),\hat{X}(\cdot) \rangle$, we get
\begin{equation}\label{relationship between X,Y and Z}
\begin{aligned}
	 \hat{Y}(r)&=\langle p(r),\hat{X}(r) \rangle + \gamma(r),\\
	 \hat{Z}(r)&= \langle q(r) + \bar{\sigma}_{x}(r) p(r), \hat{X}(r) \rangle + \langle p(r), \varepsilon_2(r) \rangle + \kappa(r),
\end{aligned}
\end{equation}
where $p(\cdot)$ is the solution to first order adjoint equation \eqref{the first-order adjoint equation}, and $(\gamma(r),\kappa(r))$ is the solution to the following stochastic Lipschitz BSDE:
\begin{equation}\label{Stochastic Lipschitz BSDE1}
\left\{
\begin{aligned}
	d\gamma(r) = &-\Bigl[ \mu \bar{Z}^{t,x;\bar{u}}(r)\kappa(r) + p(r)^{\top}\varepsilon_{1}(r) + q(r)^{\top}\varepsilon_{2}(r) + \varepsilon_{3}(r) \\
	&\quad + \mu \bar{Z}^{t,x;\bar{u}}(r)p(r)^{\top}\varepsilon_{2}(r) \Bigr] dr + \kappa(r) dW(r), \\
	\gamma(T) = &\, \varepsilon_{4}(T).
\end{aligned}
\right.
\end{equation}

Step 4. Variation of the stochastic Lipschitz BSDE.

Note $\bar{Z}^{t,x; \bar{u}}\cdot W \in BMO$, then by the estimate of stochastic Lipschitz BSDEs (see Lemma \ref{solvability of Stochastic Lipschitz BSDEs}) for \eqref{Stochastic Lipschitz BSDE1} and by \eqref{eatimates of remainder terms of FBSDE}, we obtain that, for each $p\geq2$,
\begin{equation}\label{the estimate of stochastic Lipschitz BSDE1}
\begin{aligned}
	&\mathbb{E} \left[\sup\limits_{r \in [s,T]}|\gamma(r)|^{p} + \Bigl( \int_s^T \bigl| \kappa(r) \bigr|^2 dr \Bigr)^{\frac{p}{2}}\Big|\mathcal{F}_s^t  \right]\\
	&\leq C\left(\mathbb{E} \left[|\varepsilon_4(T)|^{p\bar{q}^2}+\Bigl( \int_s^T | p(r)^{\top}\varepsilon_{1}(r) + q(r)^{\top}\varepsilon_{2}(r) +\varepsilon_{3}(r)\right.\right.\\
    &\qquad\quad +\left.\left.\mu\bar{Z}^{t,x;\bar{u}}(r)p(r)^{\top}\varepsilon_{2}(r)| dr \Bigr)^{{p\bar{q}}^{2}}\Big|\mathcal{F}_s^t \right]\right)^{\frac{1}{\bar{q}^2}}\\
	&\leq C\biggl\{\left(\mathbb{E} \left[|\varepsilon_4(T)|^{p\bar{q}^2} \big|\mathcal{F}_s^t \right]\right)^{\frac{1}{\bar{q}^2}} 
     +\left(\mathbb{E} \left[\left(\int_s^T|\varepsilon_{1}(r)|dr\right)^{p\bar{q}^{2}}\Big|\mathcal{F}_s^t \right]\right)^{\frac{1}{\bar{q}^{2}}}\\
	&\qquad+\left(\mathbb{E} \left[\left(\int_s^T  |q(r)|^{2}dr\right)^{p\bar{q}^{2}}\big|\mathcal{F}_s^t \right]\right)^{\frac{1}{2\bar{q}^{2}}}
     \cdot \left(\mathbb{E} \left[\int_s^T  |\varepsilon_{2}(r)|^{2p\bar{q}^{2}}dr\Big|\mathcal{F}_s^t \right]\right)^{\frac{1}{2\bar{q}^{2}}}\\
	&\qquad+\left(\mathbb{E} \left[\left(\int_s^T  |\bar{Z}^{t,x;\bar{u}}(r)|^{2}dr\right)^{p\bar{q}^{2}}\big|\mathcal{F}_s^t \right]\right)^{\frac{1}{2\bar{q}^{2}}} 
     \cdot \left(\mathbb{E} \left[\int_s^T  |\varepsilon_{2}(r)|^{2p\bar{q}^{2}}dr\Big|\mathcal{F}_s^t \right]\right)^{\frac{1}{2\bar{q}^{2}}}\\
	&\qquad+\left(\mathbb{E} \left[\left(\int_s^T  |\varepsilon_{3}(r)|dr\right)^{p\bar{q}^{2}}\Big|\mathcal{F}_s^t \right]\right)^{\frac{1}{\bar{q}^{2}}}\biggr\}\\
	&\leq C|x'-\bar{X}^{t,x;\bar{u}}(s)|^{2p}, \quad \mathbf{P}\text{-a.s.}.
\end{aligned}
\end{equation}
In the followings, we want to prove
\begin{equation}\label{the estimate of stochastic Lipschitz BSDE2}
	|\gamma(s)-\frac{1}{2}\hat{X}(s)^{\top}P(s)\hat{X}(s)|\leq\delta(|x'-\bar{X}^{t,x;\bar{u}}(s)|^{2}), \quad \mathbf{P}\text{-a.s.}.
\end{equation}
where $\delta(\cdot): [0,\infty)\to [0,\infty)$ is a deterministic, continuous and increasing function, independent of $x' \in \mathbb{R}^n$, with $\frac{\delta(r)}{r}\to0$ as $r\to0$, here and below, $\delta$ may change from line to line.
Define
\begin{equation*}
\begin{aligned}
	\tilde{\gamma}(r)&:=\frac{1}{2}\hat{X}(r)^{\top}P(r)\hat{X}(r),\\
	\tilde{\kappa}(r)&:=\hat{X}(r)^{\top}P(r)(\bar{\sigma}_{x}(r)\hat{X}(r)+\varepsilon_{2}(r))+\frac{1}{2}\hat{X}(r)^{\top}Q(r)\hat{X}(r).
\end{aligned}
\end{equation*}
Applying It\^o's formula to $\frac{1}{2}\hat{X}(r)^{\top}P(r)\hat{X}(r)$, we obtain that $(\tilde{\gamma}(r), \tilde{\kappa}(r))$ satisfies the following BSDE:
\begin{equation}\label{tilde gamma}
\left\{
\begin{aligned}
 	d\tilde{\gamma}(r) = &\Biggl\{ \varepsilon_{1}(r)^{\top}P(r)\hat{X}(r) + \frac{1}{2}\varepsilon_{2}(r)^{\top}P(r)\varepsilon_{2}(r) + \varepsilon_{2}(r)^{\top}P(r)\bar{\sigma}_x(r)\hat{X}(r)\\
 	&-\frac{1}{2}\langle p(r),\hat{X}(r)^{\top}\bar{b}_{xx}(r)\hat{X}(r) \rangle-\frac{1}{2}\langle q(r),\hat{X}(r)^{\top}\bar{\sigma}_{xx}(r)\hat{X}(r) \rangle-\frac{1}{2}\hat{X}(r)^{\top}\bar{f}_{xx}(r)\hat{X}(r)\\
 	&-\frac{1}{2}\mu\langle\bar{Z}^{t,x;\bar{u}}(r)p(r),\hat{X}(r)^{\top}\bar{\sigma}_{xx}(r)\hat{X}(r) \rangle - \mu\hat{X}(r)^{\top}\bar{\sigma}_x(r)\bar{Z}^{t,x;\bar{u}}(r)P(r)\hat{X}(r)\\
 	&-\frac{1}{2}\mu\bar{Z}^{t,x;\bar{u}}(r)\hat{X}(r)^{\top}Q(r)\hat{X}(r)-\frac{1}{2}\mu\hat{X}(r)^{\top}(\bar{\sigma}_x(r)p(r) + q(r)\bigr)(\bar{\sigma}_x(r)p(r) \\
 	&+ q(r)\bigr)^{\top}\hat{X}(r)+ \varepsilon_{2}(r)^{\top}Q(r)\hat{X}(r) \Biggr\} dr + \tilde{\kappa}(r) dW(r), \\
    \tilde{\gamma}(T) = &\ \frac{1}{2}\hat{X}(T)^{\top} h_{xx}\bigl(\bar{X}^{t,x;\bar{u}}(T)\bigr) \hat{X}(T),
\end{aligned}
\right.
\end{equation}
where $\hat{X}(r)^{\top}\bar{\phi}_{xx}(r)\hat{X}(r)=(\hat{X}(r)^{\top}\bar{\phi}^{1}_{xx}(r)\hat{X}(r),\cdots,\hat{X}(r)^{\top}\bar{\phi}^{n}_{xx}(r)\hat{X}(r))^{\top}$ for $\phi=b,\sigma,f$.

Set
\begin{equation*}
	\hat{\gamma}(r)=\gamma(r)-\tilde{\gamma}(r),\ \hat{\kappa}(r)=\kappa(r)-\tilde{\kappa}(r).
\end{equation*}
Replace $\varepsilon_{1}(r)$ by $\frac{1}{2}\hat{X}(r)^{\top}\bar{b}_{xx}(r)\hat{X}(r)+\varepsilon_{5}(r)$, $\varepsilon_{2}(r)$ by $\frac{1}{2}\hat{X}(r)^{\top}\bar{\sigma}_{xx}(r)\hat{X}(r)+\varepsilon_{6}(r)$, $\varepsilon_{3}(r)$ by $\frac{1}{2}\hat{X}(r)^{\top}\bar{f}_{xx}(r)\hat{X}(r)+\frac{1}{2}\mu|\hat{Z}(r)|^2+\varepsilon_{7}(r)$ and $\varepsilon_{4}(T)$ by $\frac{1}{2}\hat{X}(T)^{\top}h_{xx}(\bar{X}^{t,x;\bar{u}}(T))\hat{X}(T)+\varepsilon_{8}(T)$ in \eqref{Stochastic Lipschitz BSDE1}, 
where
\begin{equation*}
\begin{aligned}
	\varepsilon_{5}(r)&:=\hat{X}(r)^\top\int_0^1 \int_0^1 \lambda \left[ b_{xx}\bigl(r, \bar{X}^{t,x; \bar{u}}(r) + \theta \lambda \hat{X}(r), \bar{u}(r)\bigr) - \bar{b}_{xx}(r) \right] d\lambda \, d\theta \; \hat{X}(r),\\
	\varepsilon_{6}(r)&:=\hat{X}(r)^\top\int_0^1 \int_0^1 \lambda \left[ \sigma_{xx}\bigl(r, \bar{X}^{t,x; \bar{u}}(r) + \theta \lambda \hat{X}(r), \bar{u}(r)\bigr) - \bar{\sigma}_{xx}(r) \right] d\lambda \, d\theta \; \hat{X}(r),\\
	\varepsilon_{7}(r)&:=\hat{X}(r)^\top\int_0^1 \int_0^1 \lambda \left[ f_{xx}\bigl(r, \bar{X}^{t,x; \bar{u}}(r) + \theta \lambda \hat{X}(r), \bar{u}(r)\bigr) - \bar{f}_{xx}(r) \right] d\lambda \, d\theta \; \hat{X}(r),\\
	\varepsilon_{8}(T)&:=\hat{X}(T)^\top\int_0^1 \int_0^1 \lambda \left[ h_{xx}\bigl(\bar{X}^{t,x; \bar{u}}(T) + \theta \lambda \hat{X}(T)\bigr) - h_{xx}(\bar{X}^{t,x; \bar{u}}(T) ) \right] d\lambda \, d\theta \; \hat{X}(T).\\
\end{aligned}
\end{equation*}
Then we can verify that $(\hat{\gamma}(r),\hat{\kappa}(r))$ satisfies the following stochastic Lipschitz BSDE:
\begin{equation}\label{Stochastic Lipschitz BSDE2}
\left\{
\begin{aligned}
	d\hat{\gamma}(r)&= -\left[\mu\bar{Z}^{t,x; \bar{u}}(r)\hat{\kappa}(r) + I(r)\right]dr+\hat{\kappa}(r)dW(r),\\
	\hat{\kappa}(T)&= \varepsilon_{8}(T).
\end{aligned}
\right.
\end{equation}
where
\begin{equation*}
\begin{aligned}
  I(r):= &\ \mu\bar{Z}^{t,x; \bar{u}}(r)\varepsilon_{2}(r)^{\top}P(r)\hat{X}(r) + \varepsilon_{5}(r)^{\top}p(r) +\varepsilon_{7}(r)+\varepsilon_{6}(r)^{\top}\left(q(r)+\mu\bar{Z}^{t,x; \bar{u}}(r)p(r)\right)\\
  &+\mu\left(p(r)^{\top}\varepsilon_{2}(r)+\kappa(r)\right)\left(\bar{\sigma}_x(r)p(r) + q(r)\right)^{\top}\hat{X}(r)+\frac{1}{2}\mu\left(p(r)^{\top}\varepsilon_{2}(r)+\kappa(r)\right)^2\\
  &+\varepsilon_{1}(r)^{\top}P(r)\hat{X}(r)+\frac{1}{2}\varepsilon_{2}(r)^{\top}P(r)\varepsilon_{2}(r)+\varepsilon_{2}(r)^{\top}P(r)\bar{\sigma}_x(r)\hat{X}(r)+\varepsilon_{2}(r)^{\top}Q(r)\hat{X}(r).
\end{aligned}
\end{equation*}
Note $\bar{Z}^{t,x; \bar{u}}\cdot W \in BMO$, then by the estimate of the stochastic Lipschitz BSDEs (see Lemma 2.4 ), we obtain that
\begin{equation*}
\begin{aligned}
	|\hat{\gamma}(s)|^2
	&\leq\mathbb{E} \left[\sup\limits_{r \in [s,T]}|\hat{\gamma}(r)|^{2}\big|\mathcal{F}_s^t  \right]
	 \leq C\left(\mathbb{E} \left[ |\varepsilon_{8}(T)|^{2\bar{q}^2}+\left(\int_s^T|I(r)|dr\right)^{2\bar{q}^2} \Big|\mathcal{F}_s^t  \right]\right)^{\frac{1}{\bar{q}^2}}\\
	&\leq C\left\{\left(\mathbb{E} \left[ |\varepsilon_{8}(T)|^{2\bar{q}^2}\big|\mathcal{F}_s^t \right]\right)^{\frac{1}{\bar{q}^2}}
     +\left(\mathbb{E} \left[ \left(\int_s^T|I(r)|dr\right)^{2\bar{q}^2}\Big|\mathcal{F}_s^t \right]\right)^{\frac{1}{\bar{q}^2}}\right\}.
\end{aligned}
\end{equation*}
Denote
\begin{equation*}
\begin{aligned}
	\rho(r) &:= \sum_{i=1}^{3} \int_{0}^{1} \int_{0}^{1} \lambda \left|\psi^{i}_{xx}(r, \bar{X}^{t,x; \bar{u}}(r) + \theta\lambda\hat{X}(r), \bar{u}(r)) - \bar{\psi}^{i}_{xx}(r) \right| d\lambda d\theta \\
	&\quad + \sum_{i=1}^{2} \int_{0}^{1} \left| \psi^{i}_{x}(r, \bar{X}^{t,x; \bar{u}}(r) + \theta\hat{X}(r), \bar{u}(r)) - \bar{\psi}^{i}_{x}(r) \right| d\theta,
\end{aligned}
\end{equation*}
where $\psi^1=b, \psi^2=\sigma, \psi^3=f.$ Thus, we have
\begin{equation}\label{estimate1}
	|\varepsilon_{i}(r)|\leq\rho(r)|\hat{X}(r)|,\quad |\varepsilon_{j}(r)|\leq\rho(r)|\hat{X}(r)|^2,\ i=1,2 ,\ j=5,6,7.
\end{equation}
Since $p(\cdot)$ and $\bar{\sigma}_x(\cdot)$ is bounded, by \eqref{estimate1}, we get
\begin{equation*}
\begin{aligned}
	|I(r)| \leq C\Big\{&(1+|P(r)|)\rho(r)|\bar{Z}^{t,x; \bar{u}}(r)||\hat{X}(r)|^2+(1+|q(r)|)\rho(r)|\hat{X}(r)|^2\\
	&+(1+|q(r)|)|\kappa(r)\hat{X}(r)|+\rho(r)|\kappa(r)\hat{X}(r)|+|\kappa(r)|^2\\
	&+(|P(r)|+|Q(r)|)\rho(r)|\hat{X}(r)|^2+(1+|P(r)|)\rho(r)^2|\hat{X}(r)|^2\Big\}.
\end{aligned}
\end{equation*}
Next, we estimate term by term. By the Lipschitz continuity of $h_{xx}$, we have
\begin{equation}
\begin{aligned}
	&\left(\mathbb{E} \left[ |\varepsilon_{8}(T)|^{2\bar{q}^2}\big|\mathcal{F}_s^t \right]\right)^{\frac{1}{\bar{q}^2}}
     \leq \left(\mathbb{E}\left[\left|\hat{X}(T)\right|^{8\bar{q}^2} \big| \mathcal{F}_s^t\right]\right)^{\frac{1}{2\bar{q}^2}} \\
	&\quad\cdot \left(\mathbb{E}\left[\left|\int_0^1\int_0^1 \lambda \left[h_{xx}(\bar{X}^{t,x;\bar{u}}(T)
     +\theta\lambda\hat{X}(T))-h_{xx}(\bar{X}^{t,x;\bar{u}}(T))\right] d\lambda d\theta \right|^{4\bar{q}^2} \Big| \mathcal{F}_s^t\right]\right)^{\frac{1}{2\bar{q}^2}} \\
	&\leq \delta\left(|x'-\bar{X}^{t,x;\bar{u}}(s)|^4\right).
\end{aligned}
\end{equation}
By H\"older's inequality, we get
\begin{equation}
\begin{aligned}
	&\left(\mathbb{E} \left[ \left(\int_s^T(1+|P(r)|)\rho(r)|\bar{Z}^{t,x; \bar{u}}(r)||\hat{X}(r)|^2dr\right)^{2\bar{q}^2}\Big|\mathcal{F}_s^t \right]\right)^{\frac{1}{\bar{q}^2}}\\
	&\leq C\left(\mathbb{E} \left[\left(1+\sup\limits_{r \in [s,T]}|P(r)|^{2\bar{q}^2}\right)\sup\limits_{r \in [s,T]}|\hat{X}(r)|^{4\bar{q}^2} \left(\int_s^T\rho(r)^2dr\right)^{\bar{q}^2}
     \left(\int_s^T|\bar{Z}^{t,x; \bar{u}}(r)|^2dr\right)^{\bar{q}^2}\Big|\mathcal{F}_s^t \right]\right)^{\frac{1}{\bar{q}^2}}\\
	&\leq C\left(\mathbb{E} \left[\left(1+\sup_{r \in [s,T]}|P(r)|^{2\bar{q}^2}\right)^2\Big|\mathcal{F}_s^t \right]\right)^{\frac{1}{2\bar{q}^2}}
	 \left(\mathbb{E} \left[ \sup_{r \in [s,T]}|\hat{X}(r)|^{16\bar{q}^2}\Big|\mathcal{F}_s^t \right]\right)^{\frac{1}{4\bar{q}^2}} \\
	&\quad \cdot \left(\mathbb{E} \left[ \left(\int_s^T \rho(r)^2 dr\right)^{8\bar{q}^2}\Big|\mathcal{F}_s^t \right]\right)^{\frac{1}{8\bar{q}^2}}
	\left(\mathbb{E} \left[ \left(\int_s^T |\bar{Z}^{t,x; \bar{u}}(r)|^2 dr\right)^{8\bar{q}^2}\Big|\mathcal{F}_s^t \right]\right)^{\frac{1}{8\bar{q}^2}}\\
	&\leq\delta\left(|x'-\bar{X}^{t,x;\bar{u}}(s)|^4\right).
\end{aligned}
\end{equation}
Similarly, we obtain
\begin{equation}\label{estimate2}
\begin{aligned}
	&\left(\mathbb{E} \left[ \left(\int_s^T(1+|q(r)|)\rho(r)|\hat{X}(r)|^2dr\right)^{2\bar{q}^2}\Big|\mathcal{F}_s^t \right]\right)^{\frac{1}{\bar{q}^2}}\\
	&\leq C\left(\mathbb{E} \left[\sup\limits_{r \in [s,T]}|\hat{X}(r)|^{4\bar{q}^2} \left(\int_s^T\rho(r)^2dr\right)^{\bar{q}^2}\left(\int_s^T(1+|q(r)|)^2dr\right)^{\bar{q}^2}\Big|\mathcal{F}_s^t \right]\right)^{\frac{1}{\bar{q}^2}}\\
	&\leq\delta\left(|x'-\bar{X}^{t,x;\bar{u}}(s)|^4\right),
\end{aligned}
\end{equation}
and the estimate for $\left(\mathbb{E} \left[ \left(\int_s^T(|P(r)|+|Q(r)|)\rho(r)|\hat{X}(r)|^2dr\right)^{2\bar{q}^2}\Big|\mathcal{F}_s^t \right]\right)^{\frac{1}{\bar{q}^2}}$ is similar to \eqref{estimate2}. 
By Holder's inequality, we get
\begin{equation}\label{estimate3}
\begin{aligned}
	&\left(\mathbb{E} \left[ \left(\int_s^T(1+|q(r)|)|\kappa(r)\hat{X}(r)|dr\right)^{2\bar{q}^2}\Big|\mathcal{F}_s^t \right]\right)^{\frac{1}{\bar{q}^2}}\\
	&\leq C\left(\mathbb{E} \left[\sup\limits_{r \in [s,T]}|\hat{X}(r)|^{2\bar{q}^2} \left(\int_s^T \kappa(r)^2dr\right)^{\bar{q}^2}\left(\int_s^T(1+|q(r)|)^2dr\right)^{\bar{q}^2}\Big|\mathcal{F}_s^t \right]\right)^{\frac{1}{\bar{q}^2}}\\
	&\leq C\left(\mathbb{E} \left[\sup\limits_{r \in [s,T]}|\hat{X}(r)|^{4\bar{q}^2}|\mathcal{F}_s^t \right]\right)^{\frac{1}{2\bar{q}^2}} 
     \left(\mathbb{E} \left[\left(\int_s^T \kappa(r)^2dr\right)^{4\bar{q}^2}\Big|\mathcal{F}_s^t \right]\right)^{\frac{1}{4\bar{q}^2}}\\
	&\quad\cdot \left(\mathbb{E} \left[\left(\int_s^T(1+|q(r)|)^2dr\right)^{4\bar{q}^2}\Big|\mathcal{F}_s^t \right]\right)^{\frac{1}{4\bar{q}^2}}\\
	&\leq\delta\left(|x'-\bar{X}^{t,x;\bar{u}}(s)|^4\right),
\end{aligned}
\end{equation}
and the estimate for $\left(\mathbb{E} \left[ \left(\int_s^T\rho(r)|\kappa(r)\hat{X}(r)|dr\right)^{2\bar{q}^2}\Big|\mathcal{F}_s^t \right]\right)^{\frac{1}{\bar{q}^2}}$ is similar to \eqref{estimate3}. 
By \eqref{the estimate of stochastic Lipschitz BSDE1}, we have
\begin{equation}\label{estimate4}
	\left(\mathbb{E} \left[ \left(\int_s^T|\kappa(r)|^2dr\right)^{2\bar{q}^2}\Big|\mathcal{F}_s^t \right]\right)^{\frac{1}{\bar{q}^2}}\leq\delta\left(|x'-\bar{X}^{t,x;\bar{u}}(s)|^4\right).
\end{equation}
And then we prove the last one.
\begin{equation}\label{estimate5}
\begin{aligned}
	&\left(\mathbb{E} \left[ \left(\int_s^T(1+|P(r)|)\rho(r)^2|\hat{X}(r)|^2dr\right)^{2\bar{q}^2}\Big|\mathcal{F}_s^t \right]\right)^{\frac{1}{\bar{q}^2}}\\
	&\leq C\left(\mathbb{E} \left[\left(1+\sup\limits_{r \in [s,T]}|P(r)|^{2\bar{q}^2}\right)\sup\limits_{r \in [s,T]}|\hat{X}(r)|^{4\bar{q}^2} \left(\int_s^T\rho(r)^2dr\right)^{2\bar{q}^2}\Big|\mathcal{F}_s^t \right]\right)^{\frac{1}{\bar{q}^2}}\\
	&\leq\delta\left(|x'-\bar{X}^{t,x;\bar{u}}(s)|^4\right).
\end{aligned}
\end{equation}
Thus, we obtain
\begin{equation}\label{stochastic Lipschitz BSDE2}
	|\hat{\gamma}(s)|\leq\delta(|x'-\bar{X}^{t,x;\bar{u}}(s)|^{2}), \quad \mathbf{P}\text{-a.s.}.
\end{equation}

Step 5. Completion of the proof.

Since the set of all rational vectors $x' \in \mathbb{R}^{n}$ is countable, we can find a subset $\Omega_0 \subseteq \Omega$ with $\mathbf{P}(\Omega_0) = 1$ such that for any $\omega_0 \in\Omega_0$,
\begin{equation*}
\left\{
\begin{aligned}
	&V(s,\bar{X}^{t,x;\bar{u}}(s,\omega_0)) = \bar{Y}^{t,x;\bar{u}}(s,\omega_0),\ \eqref{estimate of recursive FBSDE1},\eqref{eatimates of remainder terms of FBSDE},\eqref{relationship between X,Y and Z},
     \eqref{the estimate of stochastic Lipschitz BSDE1},\eqref{the estimate of stochastic Lipschitz BSDE2}\ \text{are satisfied for} \\
	&\text{any rational vector $x'$} ,(\Omega,\mathcal{F},\mathbf{P}(\cdot|\mathcal{F}_s^t)(\omega_0),W(\cdot)-W(s)) \in \mathcal{U}^{\omega}[s,T],\\
	&\text{and} \sup_{s\leq r\leq T} \left[|p(r,\omega_0)| + |P(r,\omega_0)|\right] < \infty.
\end{aligned}
\right.
\end{equation*}
The first relation of the above is obtained by the DPP in \cite{M21}. Let $\omega_0 \in \Omega_0$ be fixed, and then for any rational vector $x' \in \mathbb{R}^n$, by \eqref{stochastic Lipschitz BSDE2}, we have
\begin{equation*}
	|\hat{\gamma}(s,\omega_0)|\leq \delta\left(\left|x'-\bar{X}^{t,x;\bar{u}}{(s,\omega_0)}\right|^{2}\right),\quad \text{for all}\ s\in[t,T].
\end{equation*}
By the definition of $\hat{\gamma}(s)$, we have for all $s\in[t,T]$,
\begin{equation*}
\begin{aligned}
	&Y^{s,x',\bar{u}}{(s,\omega_0)} - \bar{Y}^{t,x,\bar{u}}{(s,\omega_0)} \leq \left\langle p(s,\omega_0), \left(x' - \bar{X}^{t,x;\bar{u}}(s,\omega_0)\right) \right\rangle \\
	&\quad + \frac{1}{2} \left\langle P(s,\omega_0)\left(x' - \bar{X}^{t,x;\bar{u}}(s,\omega_0)\right), \left(x' - \bar{X}^{t,x;\bar{u}}{(s,\omega_0)}\right) \right\rangle +\delta\left( \left|x' - \bar{X}^{t,x;\bar{u}}(s,\omega_0) \right|^2 \right).
\end{aligned}
\end{equation*}
Thus for all $s\in[t,T]$,
\begin{equation}\label{the different of the value function}
\begin{aligned}
	V&(s,x')-V(s,\bar{X}^{t,x;\bar{u}}(s,\omega_0))\leq Y^{s,x';\bar{u}}(s,\omega_0) - \bar{Y}^{t,x;\bar{u}}(s,\omega_0) 
	 \leq \left\langle p(s,\omega_0), \left(x' - \bar{X}^{t,x;\bar{u}}(s,\omega_0)\right) \right\rangle \\
	&\quad + \frac{1}{2} \left\langle P(s,\omega_0)\left(x' - \bar{X}^{t,x;\bar{u}}(s,\omega_0)\right), \left(x' - \bar{X}^{t,x;\bar{u}}(s,\omega_0)\right) \right\rangle + \delta \left( \left| x' - \bar{X}^{t,x;\bar{u}}(s,\omega_0) \right|^2 \right).
\end{aligned}
\end{equation}
Note that the term $\delta(|x' - \bar{X}^{t,x;\bar{u}}{(s,\omega_0)}|^2)$ in the above depends only on the size of $|x' - \bar{X}^{t,x;\bar{u}}{(s,\omega_0)}|$ and it is independent of $x'$. 
Therefore, by the continuity of $V(s, \cdot)$, we see that \eqref{the different of the value function} holds for all $x' \in \mathbb{R}^n$ (for more similar details see \cite{Z90-2}), 
which by the definition \eqref{second-order super- and sub-jets in the spatial variable} proves
\begin{equation*}
	\{p(s)\} \times [P(s),\infty) \subseteq D_x^{2,+} V(s,\bar{X}^{t,x;\bar{u}}(s)),\ \forall s \in [t,T],\ \mathbf{P}\text{-a.s.}.
\end{equation*}

Finally, fix an $\omega \in \Omega$ such that \eqref{the different of the value function} holds for any $x' \in \mathbb{R}^n$. 
For any $(\hat{p}, \hat{P}) \in D_x^{2,-} V\left(s, \bar{X}^{t,x;\bar{u}}(s)\right)$, by  definition \eqref{second-order super- and sub-jets in the spatial variable} we have
\small
\begin{equation*}
\begin{aligned}
	0 &\leq \liminf_{x' \to \bar{X}^{t,x;\bar{u}}(s)} \left\{ \frac{V(s, x') - V\left(s, \bar{X}^{t,x;\bar{u}}(s)\right) 
     - \left\langle \hat{p}, x' - \bar{X}^{t,x;\bar{u}}(s) \right\rangle-\frac{1}{2} \left\langle \hat{P}(x' - \bar{X}^{t,x;\bar{u}}(s)), x' - \bar{X}^{t,x;\bar{u}}(s) \right\rangle}{|x' - \bar{X}^{t,x;\bar{u}}(s)|^2} \right\} \\
	&\leq \liminf_{x' \to \bar{X}^{t,x;\bar{u}}(s)} \left\{ \frac{\left\langle p(s) - \hat{p}, x' - \bar{X}^{t,x;\bar{u}}(s) \right\rangle 
     + \frac{1}{2} \left\langle (P(s) - \hat{P})(x' - \bar{X}^{t,x;\bar{u}}(s)), x' - \bar{X}^{t,x;\bar{u}}(s) \right\rangle}{|x' - \bar{X}^{t,x;\bar{u}}(s)|^2} \right\}.
\end{aligned}
\end{equation*}
\normalsize
Then, it is necessary that
\begin{equation*}
	\hat{p} = p(s),\ \hat{P} \leq P(s),\quad \forall s \in [t,T],\ \mathbf{P}\text{-a.s.}.
\end{equation*} 
Thus, \eqref{relation of adjoint and second-order super and sub-jets in spatial variable} holds. \eqref{relation of adjoint and first-order super and sub-jets in spatial variable} is immediate. The proof is complete.
\end{proof}

\subsection{Proof of Theorem \ref{relation of MP and DPP in time variable}}

\begin{proof}
The proof is divided into two steps.

Step 1: Variations and estimations for FBSDE.

For any $s \in (t,T]$, choose $\tau \in (s, T]$. 
Denote by $(X^{\tau, \bar{X}^{t,x; \bar{u}}(s), \bar{u}}(\cdot),Y^{\tau, \bar{X}^{t,x; \bar{u}}(s), \bar{u}}(\cdot),Z^{\tau, \bar{X}^{t,x; \bar{u}}(s), \bar{u}}(\cdot))$ the solution to the following FBSDE on $ [\tau, T] $:
\begin{equation*}
\begin{cases}
\begin{aligned}
	 X^{\tau, \bar{X}^{t,x; \bar{u}}(s); \bar{u}}(r) &= \bar{X}^{t,x; \bar{u}}(s) + \int_{\tau}^{r} b( \alpha, X^{\tau, \bar{X}^{t,x; \bar{u}}(s), \bar{u}}(\alpha), \bar{u}(\alpha) ) d\alpha \\
	&\quad+ \int_{\tau}^{r} \sigma ( \alpha, X^{\tau, \bar{X}^{t,x; \bar{u}}(s), \bar{u}}(\alpha), \bar{u}(\alpha) ) dW(\alpha),\\
	 Y^{\tau, \bar{X}^{t,x; \bar{u}}(s); \bar{u}}(r) &= h(X^{\tau, \bar{X}^{t,x; \bar{u}}(s); \bar{u}}(T)) +\int_{r}^{T} \left[f( \alpha, X^{\tau, \bar{X}^{t,x; \bar{u}}(s), \bar{u}}(\alpha), \bar{u}(\alpha))\right. \\
	&\quad +\left. \frac{\mu}{2}|Z^{\tau, \bar{X}^{t,x; \bar{u}}(s); \bar{u}}(\alpha)|^2\right]d\alpha -  \int_{r}^{T}Z^{\tau, \bar{X}^{t,x; \bar{u}}(s); \bar{u}}(\alpha)dW(\alpha).
\end{aligned}
\end{cases}
\end{equation*}
For $r \in [\tau, T] $, Set
\begin{equation*}
\begin{aligned}
	 &\hat{X}_{\tau}(r) := X^{\tau, \bar{X}^{t,x; \bar{u}}(s); \bar{u}}(r) -\bar{X}^{t,x; \bar{u}}(r),\ \hat{Y}_{\tau}(r) := Y^{\tau, \bar{X}^{t,x; \bar{u}}(s); \bar{u}}(r) -\bar{Y}^{t,x; \bar{u}}(r),\\
	 &\hat{Z}_{\tau}(r) := Z^{\tau, \bar{X}^{t,x; \bar{u}}(s); \bar{u}}(r) -\bar{Z}^{t,x; \bar{u}}(r).
\end{aligned}
\end{equation*}
By Lemma 2.2 and 2.3, we have the following estimates for any $p \geq 2$,
\begin{equation}\label{estimations for variations FBSDE1}
\begin{aligned}
	&\mathbb{E} \left[ \sup_{\tau \leq r \leq T}\Big(|\hat{X}_{\tau}(r)|^{p}+ |\hat{Y}_{\tau}(r)|^{p} \Big) + \Big(\int_{\tau}^{T}|\hat{Z}_{\tau}(r)|^2dr\Big)^{\frac{p}{2}}\Big| \mathcal{F}_{\tau}^t \right] \\
	&\leq C |\bar{X}^{t,x;\bar{u}}(\tau) - \bar{X}^{t,x;\bar{u}}(s)|^{p}, \quad \mathbf{P}\text{-a.s.}.
\end{aligned}
\end{equation}
Note that
\begin{equation*}
	\bar{X}^{t,x;\bar{u}}(\tau) - \bar{X}^{t,x;\bar{u}}(s) = \int_{s}^{\tau} \bar{b}(r)dr + \int_{s}^{\tau} \bar{\sigma}(r)dW(r).
\end{equation*}
Taking conditional expectation $\mathbb{E}\left[\cdot|\mathcal{F}_s^t\right]$ on both sides of \eqref{estimations for variations FBSDE1}, we get for \text{a.e.} $s \in [t,T]$,
\begin{equation}\label{estimate of FBSDE}
	\mathbb{E} \left[ \sup_{\tau \leq r \leq T}\Big(|\hat{X}_{\tau}(r)|^{p}+ |\hat{Y}_{\tau}(r)|^{p} \Big) + \Big(\int_{\tau}^{T}|\hat{Z}_{\tau}(r)|^2dr\Big)^{\frac{p}{2}}\Big| \mathcal{F}_{s}^t \right] 
	\leq C(|\tau-s|^{\frac{p}{2}}), \quad \mathbf{P}\text{-a.s.},
\end{equation}
as $\tau \downarrow  s$. The process $\hat{X}_{\tau}(\cdot), \hat{Y}_{\tau}(\cdot) $ and $ \hat{Z}_{\tau}(\cdot)$ satisfies the following variational equations:
\begin{equation}\label{variational equations 2}
\left\{
\begin{aligned}
	d\hat{X}_{\tau}(r) &= \left[ \bar{b}_x(r) \hat{X}_{\tau}(r) + \varepsilon_{\tau 1}(r) \right] dr + \left[ \bar{\sigma}_x(r) \hat{X}_{\tau}(r) + \varepsilon_{\tau 2}(r) \right] dW(r), \\  
	\hat{X}_{\tau}(\tau) &= -\int_{s}^{\tau} \bar{b}(r) dr - \int_{s}^{\tau} \bar{\sigma}(r) dW(r),\\
	d\hat{Y}_{\tau}(r) &= -\left[ \bar{f}_x(r) \hat{X}_{\tau}(r) + \mu\bar{Z}^{t,x;\bar{u}}(r)\hat{Z}_{\tau}(r) +\varepsilon_{\tau 3}(r) \right] dr + \hat{Z}_{\tau}(r)dW(r), \\  
	\hat{Y}_{\tau}(T) &= h_{x}(\bar{X}^{t,x;\bar{u}}(T))\hat{X}_{\tau}(T) +\varepsilon_{\tau 4}(T) ,
\end{aligned}
\right.
\end{equation}
where
\begin{equation*}
\begin{aligned}
	\varepsilon_{\tau 1}(r) &:= \int_{0}^{1} \left[ b_x \left( r, \bar{X}^{t,x;\bar{u}}(r) + \theta \hat{X}_{\tau}(r), \bar{u}(r) \right) - \bar{b}_x(r) \right] \hat{X}_{\tau}(r) d\theta, \\
	\varepsilon_{\tau 2}(r) &:= \int_{0}^{1} \left[ \sigma_x \left( r, \bar{X}^{t,x;\bar{u}}(r) + \theta \hat{X}_{\tau}(r), \bar{u}(r)  \right) - \bar{\sigma}_x(r) \right] \hat{X}_{\tau}(r) d\theta,\\ 
	\varepsilon_{\tau 3}(r) &:= \int_{0}^{1} \left[ f_x \left(  r, \bar{X}^{t,x;\bar{u}}(r) + \theta \hat{X}_{\tau}(r), \bar{u}(r)  \right) - \bar{f}_x(r) \right] \hat{X}_{\tau}(r)d\theta + \frac{\mu}{2}|\hat{Z}_{\tau}(r)|^2,\\
	\varepsilon_{\tau 4}(T) &:= \int_{0}^{1} \left[ h_x \left( \bar{X}^{t,x;\bar{u}}(T) + \theta \hat{X}_{\tau}(T) \right) - h_{x}(\bar{X}^{t,x;\bar{u}}(T)) \right] \hat{X}_{\tau}(T) d\theta.
\end{aligned}
\end{equation*}
Similar to the proof in Theorem 4.1, we obtain
\begin{equation*}
	Y^{\tau,\bar{X}^{t,x;\bar{u}}(s);\bar{u}}(\tau)-\bar{Y}^{t,x;\bar{u}}(\tau)\leq p(\tau)^{\top}\hat{X}_{\tau}(\tau)+\frac{1}{2}\hat{X}_{\tau}(\tau)^{\top}P(\tau)\hat{X}_{\tau}(\tau)+\delta(|\hat{X}_{\tau}(\tau)|^{2}), \quad \mathbf{P}\text{-a.s.},
\end{equation*}
where $\delta(\cdot): [0,\infty)\to [0,\infty)$ is a deterministic, continuous and increasing function, independent of $x' \in \mathbb{R}^n$, with $\frac{\delta(r)}{r}\to0$ as $r\to0$, here and below, $\delta$ may change from line to line. This implies for \text{a.e.} $ s \in [t,T] $,
\begin{equation*}
\begin{aligned}
	&\mathbb{E}\left[ Y^{\tau,\bar{X}^{t,x;\bar{u}}(s);\bar{u}}(\tau)-\bar{Y}^{t,x;\bar{u}}(\tau) \big| \mathcal{F}_{s}^{t} \right]\\
    &\leq\mathbb{E}\left[ p(\tau)^{\top}\hat{X}_{\tau}(\tau)+\frac{1}{2}\hat{X}_{\tau}(\tau)^{\top}P(\tau)\hat{X}_{\tau}(\tau) \Big| \mathcal{F}_{s}^{t} \right]+\delta\left(|\tau-s|\right),\quad \mathbf{P}\text{-a.s. as }\tau\downarrow s.
\end{aligned}
\end{equation*}

Step 2: Completion of the proof.

Note that $\left(\Omega,\mathcal{F},\mathbf{P}\left(\cdot|\mathcal{F}^{t}_{\tau}\right),W(\cdot)-W(\tau);u(\cdot)|_{[\tau,T]}\right)\in\mathcal{U}^{w}[\tau,T]$, $\mathbf{P}$-a.s.. 
Thus by the definition of value function $V$, we have
\begin{equation*}
	V(\tau,\bar{X}^{t,x;\bar{u}}(s)) \leq Y^{\tau,\bar{X}^{t,x;\bar{u}}(s);\bar{u}}(\tau),\quad \mathbf{P}\text{-a.s.}.
\end{equation*}
Taking $\mathbb{E}\left[\cdot\mid\mathcal{F}_{s}^{t}\right]$ on both sides and noting that $\bar{X}^{t,x;\bar{u}}(s)$ is $\mathcal{F}_{s}^{t}$-measurable, we have
\begin{equation}\label{the value function and BSDE}
	V(\tau,\bar{X}^{t,x;\bar{u}}(s)) \leq \mathbb{E}\left[Y^{\tau,\bar{X}^{t,x;\bar{u}}(s);\bar{u}}(\tau)|\mathcal{F}_s^t\right],\quad \mathbf{P}\text{-a.s.}.
\end{equation}
By the DPP of \cite{M21}, choose a common suset  $\Omega_0 \subseteq \Omega$ with $\mathbf{P}(\Omega_0) = 1$ such that for any $\omega_0 \in\Omega_0$, the following holdes:
\begin{equation*}
\left\{
\begin{aligned}
	&V(s,\bar{X}^{t,x;\bar{u}}(s,\omega_0)) = \bar{Y}^{t,x;\bar{u}}(s,\omega_0),\ \eqref{estimate of FBSDE},\eqref{the value function and BSDE}\ \text{hold for any rational number $\tau > s$}, \\
	&(\Omega,\mathcal{F},\mathbf{P}(\cdot|\mathcal{F}_s^t)(\omega_0),W_\cdot-W_s;u_\cdot|_{[s,T]}) \in \mathcal{U}^{\omega}[s,T], \text{ and} \sup_{s\leq r\leq T} \left[|p(r,\omega_0)| + |P(r,\omega_0)|\right] < \infty.
\end{aligned}
\right.
\end{equation*}
Let $\omega_0 \in\Omega_0$ be fixed. Then for any rational number $\tau > s$ and for \text{a.e.} $s \in [t,T)$, we have
\begin{equation}\label{the differece of the value function}
\begin{aligned}
	V&(\tau,\bar{X}^{t,x;\bar{u}}(s,\omega_0)) - V(s,\bar{X}^{t,x;\bar{u}}(s,\omega_0)) \leq \mathbb{E}\left[Y^{\tau,\bar{X}^{t,x;\bar{u}}(s);\bar{u}}(\tau) - \bar{Y}^{t,x;\bar{u}}(s) \big| \mathcal{F}_s^t \right](\omega_0)\\
	&= \mathbb{E}\left[Y^{\tau,\bar{X}^{t,x;\bar{u}}(s);\bar{u}}(\tau) -\bar{Y}^{t,x;\bar{u}}(\tau) +\bar{Y}^{t,x;\bar{u}}(\tau) -\bar{Y}^{t,x;\bar{u}}(s) \big| \mathcal{F}_s^t \right](\omega_0)\\
	&\leq \mathbb{E}\left[ \langle p(\tau),\hat{X}_\tau(\tau) \rangle + \frac{1}{2}\langle P(\tau)\hat{X}_\tau(\tau),\hat{X}_\tau(\tau) \Big| \mathcal{F}_s^t \right](\omega_0)\\
	&\quad +\mathbb{E}\left[ \bar{Y}^{t,x;\bar{u}}(\tau) -\bar{Y}^{t,x;\bar{u}}(s) | \mathcal{F}_s^t \right](\omega_0) + \delta(|\tau-s|) \\
	&=\mathbb{E}\left[-\int_{s}^{\tau}\left[\bar{f}(r)+\frac{\mu}{2}|\bar{Z}^{t,x;\bar{u}}(r)|^{2}\right]dr + \langle p(\tau),\hat{X}_\tau(\tau) \rangle \right.\\[-4pt]
    &\qquad + \left.\frac{1}{2}\langle P(\tau)\hat{X}_\tau(\tau),\hat{X}_\tau(\tau) \Big| \mathcal{F}_s^t \right](\omega_0)+ \delta(|\tau-s|).
\end{aligned}
\end{equation}

Now let us estimate the terms on the right-hand side of \eqref{the differece of the value function}. To this end, we first note that for any noting that \(\omega_0 \in \Omega_0\) is fixed, thus for any square-integrable functions \(\alpha(\cdot), \hat{\alpha}(\cdot), \beta(\cdot) \in \mathcal{M}_{\mathcal{F}}^2([0, T]; \mathbb{R}^n)\), we have
\begin{equation*}
\begin{aligned}
	\mathbb{E} &\left[ \left\langle \int_s^{\tau} \alpha(r) dr, \int_s^{\tau} \hat{\alpha}(r) dr \right\rangle \Big| \mathcal{F}_s^t \right] (\omega_0)\\
	&\leq \left\{ \mathbb{E} \left[ \left| \int_s^\tau \alpha(r) dr \right|^2 \Big| \mathcal{F}_s^t \right] (\omega_0) \right\}^{\frac{1}{2}} 
     \cdot \left\{ \mathbb{E} \left[ \left| \int_s^\tau \hat{\alpha}(r) dr \right|^2 \Big| \mathcal{F}_s^t \right] (\omega_0) \right\}^{\frac{1}{2}}\\
	&\leq (\tau - s) \left\{ \int_s^\tau \mathbb{E} \left[ |\alpha(r)|^2 \middle| \mathcal{F}_s^t \right] (\omega_0) dr \cdot \int_s^\tau \mathbb{E} \left[ |\hat{\alpha}(r)|^2 \middle| \mathcal{F}_s^t \right] (\omega_0) dr \right\}^{\frac{1}{2}}\\
    &=\delta(|\tau-s|), \quad \text{as } \tau \downarrow s, \quad \text{a.e.} s \in [t, T),
\end{aligned}
\end{equation*}
and
\begin{equation*}
\begin{aligned}
	\mathbb{E} &\left[ \left\langle \int_s^\tau \alpha(r) dr, \int_s^\tau \beta(r) dW(r) \right\rangle \Big| \mathcal{F}_s^t \right] (\omega_0)\\
	&\leq \left\{ \mathbb{E} \left[ \left| \int_s^\tau \alpha(r) dr \right|^2 \Big| \mathcal{F}_s^t \right] (\omega_0) \right\}^{\frac{1}{2}} 
     \cdot \left\{ \mathbb{E} \left[ \left| \int_s^\tau \beta(r) dW(r) \right|^2 \Big| \mathcal{F}_s^t \right] (\omega_0) \right\}^{\frac{1}{2}}\\
	&\leq (\tau - s)^{\frac{1}{2}} \left\{ \int_s^\tau \mathbb{E} \left[ |\alpha(r)|^2 \middle| \mathcal{F}_s^t \right] (\omega_0) dr \cdot \int_s^\tau \mathbb{E} \left[ |\beta(r)|^2 \middle| \mathcal{F}_s^t \right] (\omega_0) dr \right\}^{\frac{1}{2}}\\
	&=\delta(|\tau-s|), \quad \text{as } \tau \downarrow s, \quad \text{a.e.} s \in [t, T).
\end{aligned}
\end{equation*}
Each last inequality in the above two inequalities holds, since the sets of right Lebesgue points have full Lebesgue measures for integrable functions, and $s\mapsto \mathcal{F}_s^t$ is right continuous in $s$. 
Thus by \eqref{variational equations 2} and \eqref{the first-order adjoint equation}, we obtain,
\begin{equation}\label{estimate 1 of adjoint and SDE}
\begin{aligned}
	&\mathbb{E}\left[ \langle p(\tau),\hat{X}_\tau(\tau) \rangle| \mathcal{F}_s^t\right](\omega_0) 
	= \mathbb{E}\left[ \langle p(s),\hat{X}_\tau(\tau) \rangle -  \langle (p(\tau)-p(s)),\hat{X}_\tau(\tau) \rangle| \mathcal{F}_s^t\right](\omega_0)\\
	&=\mathbb{E}\left[ \left\langle p(s), -\int_{s}^{\tau} \bar{b}(r)dr-\int_{s}^{\tau} \bar{\sigma}(r)dW(r)\right\rangle \right.\\
	&\qquad+\left\langle -\int_{s}^{\tau}\bigg\{\bar{b}_x(r)^{\top}p(r) + \bar{f}_x(r) + \bar{\sigma}_x(r)^{\top}q(r) + \mu \bar{Z}^{t,x;\bar{u}}(r)\left(\bar{\sigma}_x(r)^{\top}p(r) + q(r)\right) \bigg\}dr\right.\\
	&\qquad\quad \left. +\left.\int_{s}^{\tau}q(r)dW(r),-\int_{s}^{\tau} \bar{b}(r)dr-\int_{s}^{\tau} \bar{\sigma}(r)dW(r)\right\rangle \Big| \mathcal{F}_s^t\right](\omega_0)\\
	&\leq\mathbb{E}\left[ \langle p(s), -\int_{s}^{\tau} \bar{b}(r)dr\rangle-\int_{s}^{\tau}\langle q(r),  \bar{\sigma}(r)\rangle dr \Big| \mathcal{F}_s^t\right](\omega_0) + \delta(|\tau-s|).
\end{aligned}
\end{equation}
Similarly, by \eqref{variational equations 2} and \eqref{the second-order adjoint equation}, we obtain
\begin{equation}\label{estimate 2 of adjoint and SDE}
\begin{aligned}
	&\mathbb{E}\left[\hat{X}_\tau(\tau)^{\top}P(\tau)\hat{X}_\tau(\tau)| \mathcal{F}_s^t\right](\omega_0)\\
	&=\mathbb{E}\left[ \hat{X}_\tau(\tau)^{\top}P(s)\hat{X}_\tau(\tau) + \hat{X}_\tau(\tau)^{\top}(P(\tau)-P(s))\hat{X}_\tau(\tau)| \mathcal{F}_s^t\right](\omega_0)\\
	&\leq \mathbb{E}\left[ \int_{s}^{\tau}\langle P(s)\bar{\sigma}(r),\bar{\sigma}(r)\rangle dr \Big| \mathcal{F}_s^t\right](\omega_0)+ \delta(|\tau-s|).
\end{aligned}
\end{equation}
Thus, by \eqref{the differece of the value function}-\eqref{estimate 2 of adjoint and SDE}, we have for any rational $\tau \in (s,T]$ and at $\omega=\omega_0$,
\begin{equation}\label{the estimate of the value function}
\begin{aligned}
	V&(\tau,\bar{X}^{t,x;\bar{u}}(s)) - V(s,\bar{X}^{t,x;\bar{u}}(s)) \\
	&\leq\mathbb{E}\bigg[-\int_{s}^{\tau}\left[\bar{f}(r)+\frac{\mu}{2}|\bar{Z}^{t,x;\bar{u}}(r)|^{2}\right]dr + \left\langle p(s), -\int_{s}^{\tau} \bar{b}(r)dr\right\rangle-\int_{s}^{\tau}\langle q(r),  \bar{\sigma}(r)\rangle dr\\
	&\qquad+ \frac{1}{2}\int_{s}^{\tau}\langle P(s)\bar{\sigma}(r),\bar{\sigma}(r)\rangle dr\Big| \mathcal{F}_s^t \bigg] + \delta(|\tau-s|)\\
	&=-(\tau-s)\mathcal{H}_{1}(s,\bar{X}^{t,x;\bar{u}}(s),\bar{Z}^{t,x;\bar{u}}(s),\bar{u}(s))+ \delta(|\tau-s|) .
\end{aligned}
\end{equation}
By definition \eqref{right super- and sub-jets in the time variable}, we obtain that the first relation of \eqref{relationship of adjoint and super- and sub-jets in time variable} holds for any (not only rational numbers) $\tau\in(s,T]$. 
Finally, fix an $\omega\in\Omega$ such that \eqref{the estimate of the value function} holds for any $\tau\in(s,T]$. 
Then for any $\hat{q} \in D_{t+}^{1,-}V(s,\bar{X}^{t,x;\bar{u}}(s))$, by definition \eqref{right super- and sub-jets in the time variable} and \eqref{the estimate of the value function} we have
\begin{equation*}
\begin{aligned}
	0 &\leq \liminf_{\tau\downarrow s} \left\{ \frac{V\left(\tau,\bar{X}^{t,x;\bar{u}}(s)\right) - V\left(s,\bar{X}^{t,x;\bar{u}}(s)\right) - \hat{q}(\tau-s)}{|\tau-s|} \right\}\\
	&\leq\liminf_{\tau\downarrow s} \left\{ \frac{\left( -\mathcal{H}_{1}(s,\bar{X}^{t,x;\bar{u}}(s),\bar{Z}^{t,x;\bar{u}}(s),\bar{u}(s)) - \hat{q} \right)(\tau-s)}{|\tau-s|} \right\}\\
	&=\liminf_{\tau\downarrow s} \left\{ \left( -\mathcal{H}_{1}(s,\bar{X}^{t,x;\bar{u}}(s),\bar{Z}^{t,x;\bar{u}}(s),\bar{u}(s)) - \hat{q} \right)\right\}	.
\end{aligned}
\end{equation*}
Thus\vspace{-3pt}
\begin{equation*}
	\hat{q} \leq -\mathcal{H}_{1}(s,\bar{X}^{t,x;\bar{u}}(s),\bar{Z}^{t,x;\bar{u}}(s),\bar{u}(s)).
\end{equation*}
Thus, the second relation of \eqref{relationship of adjoint and super- and sub-jets in time variable} holds. The proof is complete.
\end{proof}

\end{document}